\documentclass[a4paper, 11pt]{amsart}
\usepackage{amssymb,array}
\usepackage{url}
\usepackage{graphicx}
\usepackage{subfigure}
\usepackage{enumerate}

\usepackage[left=3cm, right=3cm, top=3cm, bottom=3cm]{geometry}

\usepackage{bbm}

\renewcommand{\leq}{\leqslant}
\renewcommand{\geq}{\geqslant}

\newcommand{\N}{\mathbb{N}}

\newcommand{\R}{\mathbb{R}}
\newcommand{\C}{\mathbb{C}}

\renewcommand{\P}{\mathbb{P}}

\newcommand{\U}{\mathcal{U}}

\newcommand{\M}{\mathcal{M}}

\newcommand{\module}[1]{\left| #1\right|}

\newcommand{\norm}[1]{\left\Vert #1\right\Vert}
\newcommand{\normt}[1]{\left\Vert #1\right\Vert_{(t)}}

\DeclareMathOperator{\trace}{Tr}

\DeclareMathOperator{\I}{I}
\DeclareMathOperator{\id}{id}

\DeclareMathOperator{\Gr}{Gr}

\DeclareMathOperator{\Image}{Im}

\DeclareMathOperator{\diag}{diag}
\DeclareMathOperator{\spn}{span}

\newcommand{\isom}{\simeq}
\newcommand{\scalar}[2]{\langle #1 , #2\rangle}
\newcommand{\e}{\varepsilon}
\renewcommand{\phi}{\varphi}
\newcommand{\iy}{\infty}
\renewcommand{\d}[1]{\mathrm{d}#1}
\newcommand{\ind}{\mathbbm{1}}

\newcommand{\Rnc}{\mathbb{R}_{\neq}}

\newtheorem{theorem}{Theorem}[section]
\newtheorem{definition}[theorem]{Definition}
\newtheorem{proposition}[theorem]{Proposition}
\newtheorem{remark}[theorem]{Remark}
\newtheorem{lemma}[theorem]{Lemma}

\newtheorem{corollary}[theorem]{Corollary}

\begin{document}

\title[Eigenvectors and eigenvalues in a random subspace of a tensor product]{Eigenvectors and eigenvalues in a random subspace of a tensor product}
\author{Serban Belinschi}
\address{Institute of Mathematics ``Simion Stoilow'' of the Romanian Academy and Department of Mathematics \& Statistics,
University of Saskatchewan;
106 Wiggins Road
Saskatoon, SK S7N 5E6 } 
\email{ belinschi@math.usask.ca}
\author{Beno\^{\i}t Collins}
\address{
D\'epartement de Math\'ematique et Statistique, Universit\'e d'Ottawa,
585 King Edward, Ottawa, ON, K1N6N5 Canada
and 
CNRS, Institut Camille Jordan Universit\'e  Lyon 1, 43 Bd du 11 Novembre 1918, 69622 Villeurbanne, 
France} 
\email{bcollins@uottawa.ca}
\author{Ion Nechita}
\address{
D\'epartement de Math\'ematique et Statistique, Universit\'e d'Ottawa,
585 King Edward, Ottawa, ON, K1N6N5 Canada} 
\email{inechita@uottawa.ca}
\subjclass[2000]{Primary 15A52; Secondary 52A22, 46L54 }
\keywords{Random matrices, Random projections, Singular values of random vectors, free additive convolution}

\begin{abstract}

Given two positive integers $n$ and $k$ and a parameter $t\in (0,1)$, we choose at random a vector subspace $V_{n}\subset \mathbb{C}^{k}\otimes\mathbb{C}^{n}$
of dimension $N\sim tnk$. We show that the set of $k$-tuples of singular values of all unit vectors in $V_n$ fills asymptotically (as $n$ tends to infinity) a deterministic convex set $K_{k,t}$ that we describe using a new norm in $\R^k$.

Our proof relies on free probability, random matrix theory, complex analysis and matrix analysis techniques. The main result result comes together with a law of large numbers for the singular value decomposition of the eigenvectors corresponding to large eigenvalues of a random truncation of a matrix with high eigenvalue degeneracy.

\end{abstract}

\maketitle

\section{Introduction}

In \cite{collins-nechita-2}, it was observed that if one takes at random a vector subspace $V_{n}$ of $\C^{k}\otimes \C^{n}$ of
relative dimension $t$ for large $n$ and fixed $k$, with very high probability, some sequences of numbers in $\R_{+}^{k}$
never occur as singular values of elements in $V_{n}$ as $n$ becomes large. This result was used to provide a systematic understanding
of some non-additivity theorems for entropies in Quantum Information Theory. We refer to the bibliography of 
\cite{collins-nechita-2} for more information on this topic.

Our aim in this paper is to provide a definitive answer to the question of which sequences of numbers in $\R_{+}^{k}$ occur or not
as singular values of elements in $V_{n}$. 
Our main result can be sketched as follows - for the statement with complete definitions, we refer to Theorem \ref{thm:main}: 
\begin{theorem}\label{thm:main-short}
Let $t\in (0,1)$ be a parameter and for any $n$, $V_{n}$ a vector subspace  of $\C^k\otimes \C^n$ of dimension $N\sim tnk$ chosen at random.
Then, there exists a compact set $K_{k,t} \subset \R^k_+$ such that any $k$-tuple $\lambda$ in the interior of $K_{k,t}$ occurs with high probability as the singular value vector of some norm one vector $x \in V_n$. Moreover, the probability that some vector $\nu \notin K_{k,t}$ occurs as the singular value vector of some element $y \in V_n$ is vanishing when $n \to \infty$.
\end{theorem}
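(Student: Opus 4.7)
The approach is to identify the singular-value tuple of a unit vector $x\in V_n$ with the spectrum of its reduced density matrix $\rho_x := \trace_n\ketbra{x}{x}\in M_k(\C)$, so the question becomes the description of the random set of spectra
$$\Lambda(V_n) := \bigl\{\mathrm{spec}(\rho_x) : x\in V_n,\ \norm{x}=1\bigr\} \subset \R^k_+.$$
Since $V_n$ has $\mathrm{U}(k)$-invariant distribution and $\dim V_n \sim tnk$ is large, $\Lambda(V_n)$ should concentrate around a deterministic compact set $K_{k,t}$. The strategy is to describe $K_{k,t}$ dually, through its support function: for any Hermitian $H\in M_k(\C)$ and any unit $x\in V_n$,
$$\trace(H\rho_x) \;=\; \scalar{x}{(H\otimes\I_n)x},$$
so $\Phi_{V_n}(H) := \sup_x \trace(H\rho_x) = \lambda_{\max}\bigl(P_{V_n}(H\otimes\I_n)|_{V_n}\bigr)$, the top eigenvalue of a random compression of $H\otimes\I_n$.

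The main analytic task is to prove that $\Phi_{V_n}(H)$ converges in probability, as $n\to\infty$, to a deterministic limit $\Phi_\infty(H)$. Free probability handles weak convergence: the random projection $P_{V_n}$ is asymptotically free from $H\otimes\I_n$, so the empirical spectrum of $(H\otimes\I_n)|_{V_n}$ converges to the free compression at trace $t$ of $\mu_H := k^{-1}\sum_i \delta_{\mu_i(H)}$. \emph{Strong} convergence, at the level of the top eigenvalue, is the principal obstacle: because the spectrum of $H\otimes\I_n$ is maximally degenerate, outliers are \emph{a priori} possible and must be precisely located. This is where the announced law of large numbers for the SVD of eigenvectors attached to large eigenvalues of a random truncation enters; combined with a subordination-style argument from free harmonic analysis, it yields both an explicit closed form for $\Phi_\infty(H)$, depending only on $t$ and the spectrum of $H$, and the singular-value pattern of the vector $x^\star\in V_n$ that attains the supremum.

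To conclude, set
$$K_{k,t} := \bigl\{\lambda\in\R^k_+ : \scalar{\lambda^2_\downarrow}{\mu_\downarrow} \leq \Phi_\infty(\diag\mu) \text{ for every } \mu\in\R^k\bigr\},$$
where $\downarrow$ denotes decreasing rearrangement. By the von Neumann trace inequality, $K_{k,t}$ is the eigenvalue projection of the $\mathrm{U}(k)$-invariant convex body $\{\rho=\rho^* : \trace(H\rho)\leq \Phi_\infty(H)\text{ for all Hermitian }H\}$, hence compact and convex. Since $V_n \mapsto \Phi_{V_n}(H)$ is a Lipschitz function on the Grassmannian, Lévy concentration of measure upgrades the pointwise convergence above into uniform convergence over a net of Hermitians $H$ with $\norm{H}\leq 1$. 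This forces $\Lambda(V_n)$ to be Hausdorff-close to $K_{k,t}$ with overwhelming probability, establishing both halves of the statement in one stroke: every $\lambda$ in the interior of $K_{k,t}$ is the singular-value tuple of some $x\in V_n$ with high probability, while no $\nu\notin K_{k,t}$ can occur.
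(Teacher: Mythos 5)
Your reduction to the support function $\Phi_{V_n}(H)=\lambda_{\max}\bigl(P_{V_n}(H\otimes\I_n)P_{V_n}\bigr)$, the use of a net of Hermitians together with Lipschitz continuity on the Grassmannian, and the dual definition of $K_{k,t}$ all match the paper's treatment of the \emph{first} half of the statement (Theorem \ref{thm:upper-bound}, via Theorem \ref{thm:RMT-norm}, Lemma \ref{lem:sup-B} and Lemma \ref{lem:estimate-grassmannian}). That half of your argument is sound.

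The gap is in the claim that this establishes "both halves in one stroke." Convergence of the support functions $\Phi_{V_n}(H)\to\Phi_\infty(H)$, even uniformly in $H$, controls only the \emph{convex hull} of the set $\Lambda(V_n)$ of attained spectra; it gives the outer inclusion and, at best, approximate attainment of the \emph{exposed points} of $K_{k,t}$ (the maximizers $x^\star$ for each direction $H$). But $\Lambda(V_n)$ is not convex a priori — it is the image of the unit sphere of $V_n$ under a highly nonlinear map — so knowing its convex hull converges to $K_{k,t}$ does not show that points in the \emph{interior} of $K_{k,t}$ are actually realized by some unit vector of $V_n$. This is precisely why the paper's proof of the second inclusion (Theorem \ref{thm:lower-bound}) is so much more involved: for each exposed point one must produce not a single maximizing vector but an entire $l$-dimensional subspace $V_n'\subset V_n$ all of whose unit vectors have the prescribed singular-value pattern \emph{and} mutually orthogonal $\C^n$-legs in their Schmidt decompositions (Proposition \ref{prop:lln-subspace}, built from the $\nu_n$ top eigenvectors of $P_n(A\otimes\I_n)P_n$ via a Parseval/union-bound selection). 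Only then can one take linear combinations $\sum\alpha_j x_j$ across several such subspaces attached to different exposed points and verify, via the orthogonality of the $f_i^{(j)}$ and Lemma \ref{lem:continuity-hausdorff}, that the resulting singular values sweep out a neighborhood of any prescribed interior point. Your proposal contains no mechanism for producing vectors whose spectrum is a strict convex combination of exposed points. A secondary issue: you invoke the eigenvector law of large numbers as an \emph{ingredient} in proving strong convergence of $\lambda_{\max}$, whereas in the paper the logical order is reversed — strong convergence is proved first, by extending Haagerup--Thorbj{\o}rnsen through a sandwich of distribution functions between polynomials (Theorem \ref{thm:RMT-norm}), and the eigenvector law of large numbers is then \emph{deduced} from it; as written your dependency is circular. (There is also a notational slip: having identified $\lambda$ with $\mathrm{spec}(\rho_x)$, the duality pairing should involve $\scalar{\lambda_\downarrow}{\mu_\downarrow}$, not $\scalar{\lambda^2_\downarrow}{\mu_\downarrow}$.)
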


The statement of the above theorem, as well as any other result in this paper about singular values of vectors in a tensor product space, can be immediately translated into a statement about singular values of matrices, simply by fixing an isomorphism $\C^k \otimes \C^n \isom \M_{k \times n}(\C)$; note that the euclidean norm on $\C^k \otimes \C^n$ is pushed into the Schatten 2-norm on $\M_{k \times n}(\C)$, i.e. $\|X\| = \sqrt{\trace(XX^*)}$.
\begin{theorem}
Let $t\in (0,1)$ be a parameter and for any $n$, $V_{n}$ a vector subspace  of $\M_{k \times n}(\C)$ of dimension $N\sim tnk$ chosen at random.
Then, there exists a compact set $K_{k,t} \subset \R^k_+$ such that any $k$-tuple $\lambda$ in the interior of $K_{k,t}$ occurs with high probability as the singular value vector of a matrix $x \in V_n$ of Hilbert-Schmidt norm one. Moreover, the probability that some vector $\nu \notin K_{k,t}$ occurs as the singular value vector of some Hilbert-Schmidt norm one
matrix $y \in V_n$ is vanishing when $n \to \infty$.
\end{theorem}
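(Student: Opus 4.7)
The plan is to transport Theorem~\ref{thm:main-short} through the isomorphism invoked in the paragraph preceding the statement. Let $\phi : \C^k \otimes \C^n \to \M_{k \times n}(\C)$ be the linear bijection determined on standard basis vectors by $\phi(e_i \otimes f_j) = E_{ij}$. First I would verify that $\phi$ is a Hilbert space isomorphism when $\M_{k \times n}(\C)$ carries the Hilbert-Schmidt inner product $\langle X, Y \rangle_{\mathrm{HS}} = \trace(XY^*)$: the identity $\langle E_{ij}, E_{i'j'} \rangle_{\mathrm{HS}} = \delta_{ii'} \delta_{jj'}$ together with sesquilinearity shows that $\phi$ is isometric, so Hilbert-Schmidt unit matrices correspond exactly to Euclidean unit tensors.

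Next I would match the two notions of singular value. By definition (equivalently, via the Schmidt decomposition of a tensor), the singular value $k$-tuple of $x = \sum_{ij} X_{ij}\, e_i \otimes f_j$ is the sorted list of singular values of the associated matrix $X = \phi(x)$: if $X = U \Sigma V^*$ is an SVD of $X$, then a routine computation yields the Schmidt decomposition $x = \sum_\ell \sigma_\ell\, (U e_\ell) \otimes \overline{V f_\ell}$. Hence the singular value profile of $x$ equals that of $\phi(x)$.

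Finally I would apply Theorem~\ref{thm:main-short} to $W_n := \phi^{-1}(V_n)$. Because $\phi$ is a unitary isomorphism between ambient Hilbert spaces, the pull-back through $\phi^{-1}$ of the uniform measure on $N$-dimensional subspaces of $\M_{k \times n}(\C)$ coincides with the uniform measure on $N$-dimensional subspaces of $\C^k \otimes \C^n$, so $W_n$ is a uniform random subspace of the required dimension. The compact set $K_{k,t}$ delivered by Theorem~\ref{thm:main-short} is then the desired one, and both probabilistic assertions translate verbatim through $\phi$, since the map $x \mapsto \phi(x)$ biject norm-one elements of $W_n$ with Hilbert-Schmidt unit matrices in $V_n$ and preserves the singular value vector. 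No real obstacle is expected here --- this step is a bookkeeping translation recording the matrix face of the main theorem.
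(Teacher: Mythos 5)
Your proposal is correct and takes the same route as the paper, which treats this theorem as an immediate restatement of Theorem~\ref{thm:main-short} via the Hilbert-space isomorphism $\C^k\otimes\C^n\isom\M_{k\times n}(\C)$; you have simply filled in the routine verifications (isometry of $\phi$, matching of Schmidt and matrix singular-value decompositions, and push-forward of the invariant measure on the Grassmannian) that the paper leaves implicit. One small caveat worth noting is that with the paper's convention the ``singular values'' $\lambda_i(x)$ of a unit tensor are the \emph{squares} of the conventional matrix singular values $\sigma_\ell$ (since $\sqrt{\lambda_i}$ appears in \eqref{eq:vector-SVD}), so the identification should be read consistently as matching the paper's normalization on both sides; this does not affect the argument.
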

Even though both formulations are completely equivalent, they are of interest to different areas of mathematics. We choose to work with singular values (or \emph{Schmidt coefficients} as they are called in quantum information) of vectors because of the initial quantum information theoretical motivation.

The set $K_{k,t}$ is described with the help of a new norm on $\R^{k}$, that arises from free probability theory. 
Restricted on $\R_{+}^{k}$, it interpolates between the $l^{1}$ and the $l^{\infty}$ norm.

For the purpose of proving the above theorem, 
one first key technical result (Theorem \ref{thm:RMT-norm}) is a partial extension of a result of
Haagerup and Thorbj\o rnsen \cite{HT} to the case of random projections.
The characterization of sequences  that fail with high probability to occur as singular values of elements in $V_n$
follows from this technical result. It  uses ideas that have been introduced in \cite{collins-nechita-2}.

The characterization of sequences that occur with high probability as singular values of elements in $V_n$ 
is much more involved (we refer to this part of the proof of the main theorem as the proof of the second inclusion, whereas 
we refer to the previous part as the first inclusion). 
It turns out to rely not only on our first technical result, but also on a precise understanding of the eigenvectors 
of suitable random matrix models.

In Random Matrix Theory, the asymptotic behavior of large random matrices is the main object of study, and
 the empirical distributions of the eigenvalues as a random set is arguably the most studied 
 kind of statistics, together with, more recently, the statistics of the largest eigenvalues. 
 To our knowledge, the eigenvectors had not been recognized so far as 
variables having a structured asymptotic behavior (with a few exceptions in the case of spiked random matrices, see e.g. \cite{benaych-rao} and references therein), although they have recently been studied for various models of random matrices (see \cite{benaych} for a recent work in this direction). 

For the purposes of the proof of the second inclusion, we present in this paper
a theorem that is of independent interest, as it shows that the eigenvectors of some random matrices are much more deterministic
than one might expect. 
Our theorem can be summarized as follows ($\U(k)$ denotes the group of $k \times k$ unitary matrices):
\begin{theorem}
Let $A$ be a $k\times k$ positive semidefinite matrix with simple eigenvalues. 
Let $\nu_{n}$ be a sequence of numbers satisfying $\nu_{n}=o(n)$, and $N\sim tnk$ (where $t\in (0,1)$). 
Let $Z_{n}=\Pi_{n} (A\otimes \I_{n}) \Pi_{n}$ where $\Pi_{n}$ is a random projection of rank $N$.
Let $y_{n}$ be the eigenvector corresponding to the $\nu_{n}$-th largest eigenvalue of $Z_n$. Then, almost surely as $n\to\infty$, the
$(\R^{k},\U(k)/\U(1)^{k})$ part of the singular value decomposition of $y_n$ converges to a limit made explicit in 
Theorem \ref{thm:lln-largest-eigenvector}.
\end{theorem}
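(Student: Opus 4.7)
The plan is to study the $\M_k(\C)$-valued Cauchy transform of $Z_n$ and extract the limiting Gram structure of the eigenvector $y_n$ from its boundary behavior at the edge of the spectrum. By unitary invariance of the Haar measure on the Grassmannian we may conjugate $\Pi_n$ by the unitary diagonalizing $A$ and thereby assume $A = \diag(a_1,\ldots,a_k)$ with $a_1 > \cdots > a_k \geq 0$. Viewing $y_n \in \C^k \otimes \C^n$ as a $k \times n$ matrix $Y_n$, the $(\R^k,\U(k)/\U(1)^k)$ part of the SVD of $y_n$ is precisely the spectral decomposition of the $k \times k$ positive matrix $Y_n Y_n^* = (\id_k \otimes \tracenorm_n)(y_n y_n^*)$, which is the object we aim to control.

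Introduce the $\M_k(\C)$-valued Cauchy transform $G_n(z) := (\id_k \otimes \tfrac{1}{n}\tracenorm_n)((z - Z_n)^{-1})$. Because $A$ is diagonal and $\Pi_n$ is Haar-distributed, conjugating $Z_n$ by $D \otimes I_n$ for any diagonal unitary $D$ leaves its law unchanged, so $G_n(z)$ is equal in distribution to $D G_n(z) D^*$, forcing $\E[G_n(z)]$ to be diagonal. Standard concentration for Haar projections yields $G_n(z) \to G(z)$ almost surely, with $G(z)$ deterministic and diagonal. The limit is computed via operator-valued free probability: asymptotically $\Pi_n$ is $\M_k(\C)$-free from $A \otimes I_n$ with respect to $\id_k \otimes \tracenorm_n$, and the resulting subordination equations admit a unique analytic solution $G(z) = \diag(g_1(z),\ldots,g_k(z))$ on $\C \setminus \R$. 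By Theorem \ref{thm:RMT-norm} the spectrum of $Z_n$ concentrates on the support of the associated scalar measure $\mu_Z$, which has a well-defined top edge $\lambda^*$, and for $\nu_n = o(n)$ we have $\lambda_{\nu_n}(Z_n) \to \lambda^*$ almost surely.

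The $\M_k(\C)$-valued spectral measure of $Z_n$ decomposes on its support as $M(x)\rho(x)\,dx$, where $M(x) = \diag(M_1(x),\ldots,M_k(x))$ is recovered from the boundary values of the $g_i$ via Sokhotski--Plemelj, and $\rho$ is the scalar limiting density. Averaging the rank-one left-Gram matrices $Y_n^{(i)}(Y_n^{(i)})^*$ over the top $\nu_n$ eigenvectors converges, after normalization, to $M(\lambda^*)$ by continuity of $M$ at the edge. This identifies the candidate limit: the left singular vectors of $y_n$ converge modulo phases to the eigenvectors of $A$ (i.e.\ to the standard basis in our gauge), and the squared singular values converge to the diagonal entries of the normalized $M(\lambda^*)$, which can be read off explicitly from the edge expansion of the $g_i$.

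The main obstacle is to promote this averaged convergence to the individual statement for the $\nu_n$-th eigenvector: each $Y_n^{(\nu_n)}(Y_n^{(\nu_n)})^*$ must converge to the common edge direction $M(\lambda^*)$ uniformly in the choice $\nu_n = o(n)$. This is a rigidity statement at the edge, and I would establish it via a resolvent perturbation argument: a fluctuation of $Y_n^{(\nu_n)}(Y_n^{(\nu_n)})^*$ away from $M(\lambda^*)$ in some direction $v$ would produce an anomalous contribution of order $1/(n(\lambda^* - \lambda_{\nu_n}))$ in $\langle v, G_n(\lambda^* + i\eta) v\rangle$ for an appropriately chosen $\eta$, inconsistent with the deterministic limit $G$ unless the fluctuation vanishes. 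Making this quantitative uniformly in $\nu_n$ is the delicate step and requires careful control of the resolvent at imaginary height matched to the edge scale of the spectrum.
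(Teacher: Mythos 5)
Your approach is genuinely different from the paper's, but as it stands it has two real gaps, both concentrated in the passage from the global object $G_n(z)$ to the individual eigenvector $y_n$. First, the convergence $G_n(z)\to G(z)$ for $z$ at fixed distance from the real axis only controls the $\M_k(\C)$-valued spectral measure on windows containing a positive proportion of the $N$ eigenvalues. Since $\nu_n=o(n)$, the top $\nu_n$ eigenvectors form a vanishing fraction of the spectrum, so even your ``averaged convergence'' of $\frac{1}{\nu_n}\sum_{i\le\nu_n}Y_n^{(i)}(Y_n^{(i)})^*$ to a normalized $M(\lambda^*)$ does not follow from the global limit: it requires a local law at the edge on scale $o(1)$, which you neither state nor prove, and which certainly does not follow from Theorem \ref{thm:RMT-norm} (that theorem only gives convergence of the operator norm). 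The normalization is also delicate because $\Im G(\lambda^*+i0)$ vanishes at a generic edge, so ``$M(\lambda^*)$'' must be defined through a limiting direction rather than a Plemelj boundary value. Second, the step you yourself flag as delicate --- upgrading averaged control to the single eigenvector $y_n$ via a resolvent perturbation at height $\eta$ matched to the eigenvalue gap --- is precisely the hard rigidity statement, and the heuristic ``anomalous contribution of order $1/(n(\lambda^*-\lambda_{\nu_n}))$'' is not quantified; without a local law and a lower bound on eigenvalue spacings near the edge this argument cannot be closed. You also do not verify that the diagonal of your candidate limit coincides with the exposed point of $K_{k,t}$ in direction $a$, which is how the paper states the limit.

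For contrast, the paper's proof avoids spectral edge analysis entirely. It works on the almost-sure event where the $\nu_n$-th largest eigenvalue of $P_n(B\otimes\I_n)P_n$ converges to $\normt{b}$ uniformly over compact families of self-adjoint $B$ (Proposition \ref{prop:set-of-proba-one}). The eigenvalue equation gives the lower bound $s^A(y_n)=\trace\bigl(P_n(A\otimes\I_n)P_nP_{y_n}\bigr)\ge\normt{a}-\e$, while the upper-bound machinery of Theorem \ref{thm:upper-bound} gives $s^B(y_n)\le\normt{a}+\e$ for every $B$ unitarily conjugate to $A$. Thus $\trace_n P_{y_n}$ is a near-maximizer of $B\mapsto\trace(B\,\trace_nP_{y_n})$ over the orbit of $A$, and since $U\mapsto\trace(AUBU^*)$ has a unique maximizer (eigenvectors aligned, eigenvalues co-ordered), stability of that maximizer forces the $e_i^{(n)}$ to converge to the eigenvectors of $A$; the vector $\lambda^{(n)}$ is squeezed between the supporting hyperplane $H(a,t)$ and $K_{k,t}$, hence converges to the exposed point. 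This is a purely variational, ``soft'' argument that needs only the norm convergence you already have, whereas your route demands local-law technology well beyond what is established in the paper.
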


Finally, we study the points at the boundary of the set $K_{k,t}$ in Theorem \ref{thm:main-short}. The boundary of the dual set is a real algebraic variety for small enough values of $t$, when intersected with the hyperplane $\sum \lambda_{i}=1$. In particular, we show that for some parameters $t$ it is strictly convex, and study its faces for other values of $t$. Our techniques here rely on free probability theory, complex and convex analysis. 

The paper is organized as follows.
In section \ref{sec-setup}, we introduce our model as well as some notation. 
Then, in section \ref{sec:t-norm} we introduce a new norm via an operator algebraic construction and prove a continuity result that we
use in  section \ref{sec:rmt} to prove a convergence result for the norm of the product of random matrices. 
Section \ref{sec:main} is the main section of our paper, where we describe the limiting shape of the collection of singular values. 
In Section \ref{sec:dual-ball}, we study the set $K_{k,t}$ and its dual.

\section{Setup and notations}
\label{sec-setup}

\subsection{Singular values of a vector subspace of a tensor product}

The purpose of this paragraph is to introduce a
subset  $K_{V}\subset \R^{k}$ associated to a vector subspace $V$ of a tensor product $\C^k\otimes \C^n$. We always assume that $k$ and $n$ are integers, with $k \leq n$. This set is a `local' invariant of the inclusion 
$V\subset \C^k\otimes \C^n$ in the sense that it is not modified if $V$ is modified by a unitary in $\U(k)\otimes \U(n)$.

The \emph{singular values} of a vector $x \in \C^k\otimes \C^n$ are non-negative numbers $\lambda_{1}(x)\geq\ldots \geq \lambda_{k}(x)\geq 0$ such that
\begin{equation}\label{eq:vector-SVD}
	x=\sum_{i=1}^{k}\sqrt{\lambda_{i}(x)} \, e_{i}(x)\otimes f_{i}(x)
\end{equation}
where $e_{i}(x)$ (resp.~ $f_{i}(x)$) are orthonormal vectors in $\C^k$ (resp.~ $\C^n$). These are the singular values of the matrix obtained by identifying a vector $x \in \C^k \otimes \C^n$ with the $k \times n$ matrix obtained from $x$ via the isomorphism $\C^k \otimes \C^n \isom (\C^k)^* \otimes \C^n = \M_{k \times n}(\C)$. If $x$ is a unit norm vector in $\C^{nk}$, then $\lambda (x)=(\lambda_{1}(x), \ldots,  \lambda_{k}(x))$ belongs to the set
\begin{equation}
	\Delta_k^\downarrow=\{y\in \R^{k}_{+} \colon  y_1\geq y_2\geq \cdots\geq y_k \text{ and } \sum_{i=1}^k y_i = 1\}.
\end{equation}
We have $\Delta_k^\downarrow\subset\Delta_k$, where $\Delta_k = \{y \in \R_+^k \colon \sum_{i=1}^k y_i = 1\}$ is the $(k-1)$-dimensional probability simplex. We define the following particular vectors
\begin{equation}
	1^j0^{k-j} = (\underbrace{1, 1, \ldots, 1}_{j \text{ times}},\underbrace{0, 0, \ldots, 0}_{k-j \text{ times}}) \in \R^k.
\end{equation}

We also introduce the set $\Rnc^k = \R^k \setminus \R1^k = \{x \in \R^k \colon \exists \, i,j \text{ with } x_i \neq x_j\}$ of vectors with non constant coordinates. Let $V$ be a subspace of dimension $N$ of $\C^k\otimes \C^n$, i.e. an
element of the Grassmann manifold  $\Gr_N(\C^k\otimes \C^n)$.
Let $K_{V}$ be the set of all singular values of norm one vectors $x\in V$,
\begin{equation}
	K_V = \{\lambda(x) \colon x \in V, \|x\|=1\} \subset \Delta_k^\downarrow.
\end{equation}

For technical reasons it will sometimes be convenient to replace it by $\tilde K_{V}$ which is its \emph{symmetrized version} under permuting the coordinates, $\tilde K_{V}$ being a subset of $\Delta_k$:
$$\tilde K_{V} = \{(\lambda_{\sigma(1)}, \lambda_{\sigma(2)}, \ldots, \lambda_{\sigma(k)}) \colon \lambda \in K_V, \sigma \in S_k\}.$$

An elementary but important property of $K_{V}$ is that it has nice invariance properties. The following result is an easy consequence of the singular value decomposition.
\begin{proposition}
$K_{V}$ is invariant under `local' rotations, i.e. if $U_{1}\in \U(k), U_{2}\in \U(n)$ then 
$$K_{V}=K_{(U_{1}\otimes U_{2})\cdot V}.$$
\end{proposition}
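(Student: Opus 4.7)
The plan is to show the equality of sets by exhibiting, for every unit vector $x \in V$, a unit vector in $(U_1 \otimes U_2) \cdot V$ with the same singular values (and vice versa). Since the definition of $K_V$ only involves norms of vectors and their singular values, both of which are preserved under $U_1 \otimes U_2$, the equality should drop out almost immediately from the SVD displayed in equation~\eqref{eq:vector-SVD}.

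Concretely, I would start with an arbitrary unit $x \in V$ and write $x = \sum_{i=1}^k \sqrt{\lambda_i(x)} \, e_i(x) \otimes f_i(x)$ with orthonormal families $(e_i(x))_i \subset \C^k$ and $(f_i(x))_i \subset \C^n$. Applying $U_1 \otimes U_2$ and using the identity $(U_1 \otimes U_2)(e \otimes f) = (U_1 e) \otimes (U_2 f)$, I obtain
\[
(U_1 \otimes U_2) x \;=\; \sum_{i=1}^k \sqrt{\lambda_i(x)} \, (U_1 e_i(x)) \otimes (U_2 f_i(x)).
\]
Because $U_1$ and $U_2$ are unitary, the vectors $(U_1 e_i(x))_i$ and $(U_2 f_i(x))_i$ remain orthonormal families, so the right-hand side is itself a valid singular value decomposition. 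Uniqueness of the singular values then forces $\lambda((U_1\otimes U_2)x) = \lambda(x)$, and unitarity gives $\|(U_1\otimes U_2)x\| = \|x\| = 1$. This proves $K_V \subset K_{(U_1 \otimes U_2) \cdot V}$.

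For the reverse inclusion I would apply the same reasoning to $U_1^*$ and $U_2^*$, which are also unitary, acting on the subspace $(U_1 \otimes U_2) \cdot V$. There is no real obstacle here: the entire argument reduces to the fact that the tensor product of two unitaries is a unitary operator on $\C^k \otimes \C^n$ that acts ``locally'' on each factor and therefore preserves both the Euclidean norm and the Schmidt structure of any vector.
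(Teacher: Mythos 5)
Your proof is correct and is exactly the argument the paper has in mind: the authors simply remark that the proposition is "an easy consequence of the singular value decomposition," and your write-up fills in precisely that argument (transport the SVD by $U_1\otimes U_2$, invoke uniqueness of the singular values, and use $U_1^*,U_2^*$ for the reverse inclusion).
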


\subsection{Random Subspaces}

The integer $k$ and the real parameter $t\in (0,1)$ are fixed throughout the paper.
We are interested in a random sequence $(V_{n})_{n\geq 1}$ of subspaces of $V_{n}\subset\C^k \otimes \C^n$ having the following properties:

\begin{enumerate}
\item $V_{n}$ has dimension $N$ less than $nk$. 
$N$ is a function of $n$ such that $N$ and $n$ grow to infinity according to $N\sim tnk$.
\item The law of $V_{n}$ follows the only probability distribution on the Grassmann manifold $\Gr_N(\C^{k}\otimes\C^n)$ that
is invariant under the action of the unitary group 
$\mathcal U(nk)$. We will refer to this probability measure as the \emph{invariant measure}.
\end{enumerate}

We do not make any assumption about the correlation between the $V_{n}$'s for various values of $n$.
Whether they are correlated or independent does not affect our results. 

In this setting, we call
$$K_{n,k,t}=\tilde K_{V_{n}}$$
and we study the sequence
$K_{n,k,t}$ of symmetrical random subsets of $\Delta_{k}$, as $n\to\infty$.
The aim of this paper is to prove that $K_{n,k,t}$ exhibits a deterministic behavior as $n\to\infty$.
In order to describe it, we need to review a few notions of free probability theory and complex analysis.

\section{Freeness and a new family of norms on $\R^k$}

\label{sec:t-norm}

\subsection{Freeness}\label{subsec:freeness}

A \emph{$*$-non-commutative probability space} is
a unital $*$-algebra $\mathcal A$ endowed with a tracial 
state $\phi$, i.e. a linear map $\phi\colon\mathcal A\to\mathbb C$
satisfying $\phi (ab)=\phi (ba),\phi (aa^{*})\geq 0, \phi (1)=1$.
An element of $\mathcal A$ is called
a (non-commutative) random variable. 
Let $\mathcal A_1, \ldots ,\mathcal A_k$ be subalgebras of $\mathcal A$ having the same unit as $\mathcal A$.
They are said to be \emph{free} if for all $a_i\in \mathcal  A_{j_i}$ ($i=1, \ldots, k$) 
such that $\phi(a_i)=0$, one has  
$$\phi(a_1\cdots a_k)=0$$
as soon as $j_1\neq j_2$, $j_2\neq j_3,\ldots ,j_{k-1}\neq j_k$.
Collections $S_{1},S_{2},\ldots $ of random variables are said to be 
free if the unital subalgebras they generate are free.

Let $(a_1,\ldots ,a_k)$ be a $k$-tuple of self-adjoint random variables and let
$\mathbb{C}\langle X_1 , \ldots , X_k \rangle$ be the
free $*$-algebra of noncommutative polynomials on $\mathbb{C}$ generated by
the $k$ self-adjoint indeterminates $X_1, \ldots ,X_k$. 
The {\it joint distribution\it} of the family $\{a_i\}_{i=1}^k$ is the linear form
\begin{align*}
\mu_{(a_1,\ldots ,a_k)} : \C\langle X_1, \ldots ,X_k \rangle &\to \C \\
P &\mapsto \phi (P(a_1,\ldots ,a_k)).
\end{align*}

Given a $k$-tuple $(a_1,\ldots ,a_k)$ of free 
random variables such that the distribution of $a_i$ is $\mu_{a_i}$, the joint distribution
$\mu_{(a_1,\ldots ,a_k)}$ is uniquely determined by the
$\mu_{a_i}$'s.
In particular, $\mu_{a_1+a_2}$ and $\mu_{a_1a_2}$ depend only on
$\mu_{a_1}$ and $\mu_{a_2}$. The notations $\mu_{a_1+a_2}=
\mu_{a_1}\boxplus\mu_{a_2}$ and $\mu_{a_1a_2}=\mu_{a_1}\boxtimes
\mu_{a_2}$ were introduced in Voiculescu's works \cite{V-boxplus,V-boxtimes}; operations $\boxplus$ and $\boxtimes$ 
are called the {\em free additive}, respectively
{\em free multiplicative} convolution. 
A family $(a_1^{n},\ldots ,a_k^{n})_n$ of $k$-tuples of random
variables is said to \emph{converge in distribution} towards $(a_1,\ldots ,a_k)$
iff for all $P\in \C \langle X_1, \ldots ,X_k \rangle$, 
$\mu_{(a_1^n,\ldots ,a_k^n)}(P)$ converges towards
$\mu_{(a_1,\ldots ,a_k)}(P)$ as $n\to\infty$. 
Sequences of random variables  $(a_1^{n})_n,\ldots ,(a_k^{n})_n$ are called \emph{asymptotically free} as $n \to \iy$
iff the $k$-tuple $(a_1^{n},\ldots ,a_k^{n})_n$ converges in distribution towards a family of free random variables.

The following result was contained in \cite{voiculescu-dykema-nica} (see also \cite{collins-sniady}).

\begin{theorem}\label{libre}
Let $\{U^{(n)}_k\}_{k \in \N}$ be a collection of independent 
Haar distributed random matrices of $\M_n (\C )$ and $\{W^{(n)}_k\}_{k\in \N}$ be a 
set of constant matrices of $\M_n (\C )$ 
admitting a joint limit distribution as $n \to \iy$ with respect to the
state $\phi_n = n^{-1}\trace$.
Then, almost surely,
 the family $\{U^{(n)}_k, W^{(n)}_k\}_{k \in \N}$ admits a limit $*$-distribution $\{u_k, w_k\}_{k \in \N}$ with respect to $\phi_n$, such that $u_1$, $u_2$, \ldots, $\{w_1, w_2, \ldots\}$ are free.
\end{theorem}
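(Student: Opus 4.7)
The plan is to prove the theorem in two steps: first establish convergence in expectation of all mixed $*$-moments, then upgrade to almost sure convergence via a second-moment estimate. The main tool for the first step is the Weingarten formula, which expresses $\E[U_{i_1 j_1} \cdots U_{i_p j_p} \bar U_{i'_1 j'_1} \cdots \bar U_{i'_p j'_p}]$ for a Haar unitary $U\in \U(n)$ as a sum over pairs of permutations $(\sigma,\tau)\in S_p\times S_p$ weighted by the Weingarten function $\Wg(n,\sigma\tau^{-1})$, together with the crucial asymptotic $\Wg(n,\pi)=O(n^{-p-|\pi|})$, where $|\pi|$ is the minimal number of transpositions needed to write $\pi$. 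Since a joint $*$-distribution is determined by its values on monomials in the generators and their adjoints, and since any such monomial only involves finitely many of the $U_k^{(n)}$ and $W_k^{(n)}$, one reduces immediately to a finite number of unitaries and a finite number of deterministic matrices.

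By linearity and the standard centering procedure from free probability, to prove freeness it suffices to verify that for alternating products of centered elements the limit mixed moments vanish. Concretely, given $*$-polynomials $p_1,q_1,p_2,q_2,\ldots,p_m,q_m$ with the property that $\phi_n(p_j(U_\bullet^{(n)}))$ and $\phi_n(q_j(W_\bullet^{(n)}))$ tend respectively to centered quantities in the limiting state $\phi$, where moreover each $p_j$ and each $q_j$ involves only \emph{one} of the $U_k^{(n)}$ (resp.\ the $W_k^{(n)}$ collectively, since the $W$'s form one subalgebra), one needs to show
\[
\E\bigl[\phi_n\bigl(p_1(U^{(n)}_\bullet)\, q_1(W^{(n)}_\bullet)\, p_2(U^{(n)}_\bullet)\, q_2(W^{(n)}_\bullet)\cdots p_m(U^{(n)}_\bullet)\, q_m(W^{(n)}_\bullet)\bigr)\bigr] \longrightarrow 0.
\]
Expanding each $p_j$ as a polynomial in the entries of $U_{k_j}^{(n)}$ and $(U_{k_j}^{(n)})^*$, applying the Weingarten formula and reorganizing the remaining sums as traces of products of the fixed matrices $W_{k}^{(n)}$ with Kronecker-delta connectors encoded by the permutations $\sigma,\tau$, one obtains a sum indexed by tuples of permutations. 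Using $\Wg(n,\pi)=O(n^{-p-|\pi|})$ and the standard bound $\trace(W_{k_1}^{(n)}\cdots W_{k_r}^{(n)})=O(n)$, only the configurations for which the underlying diagram is ``genus zero'' contribute at order $n^0$; the alternation of $U$'s and $W$'s combined with the centering of the $p_j$'s forces every such leading configuration to either collapse or cancel, proving the claim.

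For the almost sure upgrade, the plan is to show that for every fixed $*$-polynomial $P$ one has the concentration estimate
\[
\mathrm{Var}\bigl(\phi_n(P(U^{(n)}_\bullet, W^{(n)}_\bullet))\bigr) = O(n^{-2}).
\]
This follows from an analogous Weingarten expansion for the second moment minus the square of the first: the diagrams responsible for the mean cancel, and all surviving diagrams have at least two fewer loops than the trivial bound allows, hence pick up an additional factor $n^{-2}$. Combined with Chebyshev's inequality and Borel--Cantelli, this yields almost-sure convergence of each fixed moment, and intersecting over a countable dense family of $*$-polynomials gives almost sure convergence of the full joint $*$-distribution.

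The main obstacle is the delicate combinatorial identification: one must show that the leading-order terms in the Weingarten expansion exactly reproduce the free moment--cumulant formula for a family consisting of free Haar unitaries that are moreover free from $\{w_k\}$. The non-crossing pair partitions that survive in the large-$n$ limit must be matched bijectively with the combinatorics of free cumulants of Haar unitaries, and the vanishing of the alternating centered moments must be extracted from this matching. This bookkeeping, together with the precise form of the leading Weingarten asymptotics, is the substantive content and is carried out in \cite{voiculescu-dykema-nica,collins-sniady}.
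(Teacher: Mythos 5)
The paper does not prove Theorem \ref{libre} at all; it simply quotes the result from \cite{voiculescu-dykema-nica} (see also \cite{collins-sniady}). Your sketch is a correct outline of precisely the Weingarten-calculus proof of \cite{collins-sniady} (convergence in expectation from the asymptotics $\Wg(n,\pi)=O(n^{-p-|\pi|})$, then an $O(n^{-2})$ variance bound combined with Chebyshev and Borel--Cantelli for the almost sure upgrade), so it coincides with the argument the paper relies on --- though, like the paper, you defer the decisive combinatorial identification of the leading terms with the free moment--cumulant structure to those same references, so what you have written is a faithful roadmap rather than a self-contained proof.
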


\subsection{Analytic transforms associated to free convolutions: definitions and reminders of classical results in complex analysis}
 
We start with the following classical definitions:

\begin{enumerate}[{I)}]
\item The \emph{Cauchy-Stieltjes transform} (or Cauchy transform) of a finite measure
$\mu$ on the real line:
$$
G_\mu(z)=\int_\mathbb R\frac{1}{z-t}\,d\mu(t),\quad
z\in\mathbb C\setminus\textrm{supp}(\mu),
$$
where $\textrm{supp}(\mu)$ denotes the topological support of $
\mu$. If $\mu$ is a positive measure, then $G_\mu$ maps the upper half
into the lower half of the complex plane, and $G_\mu(\overline{z})=
\overline{G_\mu(z)}$. Moreover, $\mu(\mathbb R)=\lim_{y\to+\infty}
iyG_\mu(iy)$.
\item $F_\mu(z)=1/G_\mu(z)$, $z\in\mathbb C\setminus\textrm{supp}(\mu).$
If the positive measure $\mu$ has compact support, then there exists a 
unique positive measure $\rho$ on the real line, whose support
is included in the convex hull of $\textrm{supp}(\mu)$ so that
$$
F_\mu(z)=\frac{z}{\mu(\mathbb R)}-\frac{\int_\mathbb Rt\,d\mu(t)}{
(\mu(\mathbb R))^2}+\int_\mathbb R\frac{1}{t-z}\,d\rho(t).
$$
This is a particular case of the so-called \emph{Nevanlinna representation}
of $F_\mu$ \cite[Equation 3.3]{akhieser}.
We shall almost exclusively be concerned with the case when 
$\mu(\mathbb R)=1$ and $\textrm{supp}(\mu)$ is a compact 
subset of $[0,+\infty)$. In that case, the total mass of $\rho$
equals the variance $\textrm{VAR}(\mu)$ of $\mu$:
 $\rho(\mathbb R)=\int s^2\,d\mu(s)-\left(\int s\,d\mu(s)\right)^2.$
\item The \emph{moment generating function} of a probability $\mu$
supported in $[0,+\infty)$ is
$$
\psi_\mu(z)=\int_{[0,+\infty)}\frac{zt}{1-zt}\,d\mu(t),\quad z\in
\mathbb C\setminus(1/\textrm{supp}(\mu)).
$$
It maps upper and lower half-planes into themselves.
It will be useful to note
\begin{equation}\label{eta-F&psi-G}
\psi_\mu(z)=\frac1z
G_\mu\left(\frac1z\right)-1,\quad \psi_\mu(0)=0.
\end{equation}

\item To compute free multiplicative convolutions of probability
distributions on $[0,+\infty)$ Voiculescu introduced the $S$-transform.
It is defined on a small enough neighborhood of zero as 
$$
S_\mu(z)=\frac{1+z}{z}\psi_\mu^{-1}(z),
$$
whenever $\mu\neq\delta_0$ is a compactly supported probability measure 
on $[0,+\infty)$. It satisfies the equation
\begin{equation}\label{S}
S_{\mu\boxtimes\nu}(z)=S_\mu(z)S_\nu(z) \quad \text{for} \quad |z|\textrm{ small}.
\end{equation}
From now on, unless otherwise specified, whenever we refer to 
$\psi_\mu^{-1}$, we refer to the inverse of $\psi_\mu$ around zero
and to its analytic continuation along the real line.
It is of interest to us to give a better description of 
the domain of injectivity of $\psi_\mu$ and the image of this
domain. A direct computation (see also \cite{BVIUMJ}) shows that
$\Im \psi_\mu'(z)>0$ for any $z$ in the upper half-plane 
for which $\Re z\le1/[\mu]$, where the notation $[\mu]$ is introduced 
in \eqref{def-bracket}. Since $\psi_\mu(\overline{z})=
\overline{\psi_\mu(z)}$ and $\psi_\mu$ preserves upper and lower 
half-planes, we conclude that $\psi_\mu$ is injective on 
$\{z\in\mathbb C\colon\Re z\le1/[\mu]\}$. On the other hand,
$\psi_\mu(x+iy)=\int\frac{t(x+iy)}{1-t(x+iy)}\,d\mu(t)=
\int\frac{tx-t^2(x^2+y^2))}{t^2y^2+(1-tx)^2}\,d\mu(t)+
iy\int\frac{t}{t^2y^2+(1-tx)^2}\,d\mu(t)$. We easily observe that 
$\Im\psi_\mu(x+iy)>\frac{y}{2(x^2+y^2+1)}\int\frac{t}{1+t^2}\,d\mu(t)$,
and, in particular,
$\Im\psi_\mu(x+i)>\frac{1}{2(x^2+1)}\int
\frac{t}{t^2+1}\,d\mu(t)$ for all $x\in\mathbb R$.
This gives us a bound on the ''thinness`` of the
domain of $\psi_\mu^{-1}$ in terms of the 
integral $\int
\frac{t}{t^2+1}\,d\mu(t)$. 

\end{enumerate}
These transforms have properties that make them important
in the study of free convolutions.

\medskip

Finally, we recall for the convenience of the reader a few classical results of complex analysis that we will need in the forthcoming proofs. 

\begin{enumerate}[{I)}]
\item
 The unit disc in the complex plane (and any conformally equivalent
domain) can be made into a metric space with a natural 
metric (the so-called pseudohyperbolic metric) with respect to which
any analytic self-map of the unit disc becomes a contraction. This is 
essentially the \emph{Schwarz-Pick Lemma}, which we
formulate here for the upper half-plane: If $f$ is an analytic 
self-map of the upper half-plane, then 
$$
\left|\frac{f(z)-f(w)}{f(z)-\overline{f(w)}}\right|\leq
\left|\frac{z-w}{z-\overline{w}}\right|, \quad \Im z,\Im w>0.
$$
Equality holds at a given pair of points if and only if $f$ is a
M\"{o}bius map.

In addition, if $z_0$ is a fixed point of $f$, and $f$ is not the
identity mapping or a rotation, 
then $z_0$ is the {\em unique} fixed point of $f$ and 
$|f'(z_0)|<1.$ The reader can find a wonderful presentation of this
subject (and much more) in the first chapter of \cite{Garnett}.

\item
There are self-maps of the upper half-plane that have no
fixed points in their domains. However, one can generalize this 
notion so that {\em all} such maps have a fixed point. In order
to do this, we should define the notion of \emph{non-tangential limit}.
The function $f$ defined on the upper half-plane has a non-tangential
limit $d$ at the point $x\in\mathbb R\cup\{\infty\}$ 
(and we shall write that as $\sphericalangle\lim_{z\to x}f(z)=d$)
if the limit of $f(z)$ exists and equals $d$ whenever $z$ 
approaches $x$ inside any closed cone $\Gamma$
included in $\{x\}\cup\{z\in\mathbb C\colon\Im z>0\}$.
This way one can also extend the notion of derivative: the 
\emph{Julia-Carath\'eodory derivative} of $f$ at a point $x\in\mathbb R$
where $\sphericalangle\lim_{z\to x}f(z)=d\in\mathbb R$
is defined as
$$
f'(x)=\sphericalangle\lim_{z\to x}\frac{f(z)-d}{z-x}.
$$
Remarkably, when the Julia-Carath\'eodory derivative of the
function $f$ is finite,
then $f'(x)=\sphericalangle\lim_{z\to x}f'(z)$.
If $x=d=\infty$, then the correct definition of the 
Julia-Carath\'eodory derivative  is 
$\sphericalangle\lim_{z\to\infty}\frac{z}{f(z)}$.
It is known that $f'(x)\in(0,+\infty]$.
It turns out that there can be infinitely many points $d\in
\mathbb R\cup\{\infty\}$ so that 
$\sphericalangle\lim_{z\to d}f(z)=d$. But if $f$ has no 
fixed point in the upper half-plane and is not a M\"obius map,
then there exists {\em exactly one }point $d\in\mathbb R\cup\{
\infty\}$ so that 
$$
\sphericalangle\lim_{z\to d}f(z)=d\quad {\rm and}\quad
f'(d)\in(0,1].
$$
A complete and very accessible reference for these results is \cite{Shapiro}.

\item

Non-tangential limits of an analytic map $f$ on the upper half-plane
can be said to uniquely determine $f$. Indeed, according to
a theorem due to Privalov, if there exists a set $E\subset\mathbb R$
of non-zero Lebesgue measure so that $\sphericalangle\lim_{z\to x}f(z)
=0$ for all $x\in E$, then $f$ is identically equal to zero \cite[Theorem 8.1]{CollingwoodL}.

\item
Conveniently, atoms of a probability measure $\mu$ can be easily 
expressed in terms of the Julia-Carath\'eodory derivatives of 
$F_\mu$ and $\psi_\mu$ as
$$
\begin{array}{cc}
\sphericalangle\lim_{z\to d}F_\mu(z)=0, & F_\mu'(d)=\alpha\\
\sphericalangle\lim_{z\to1/d}\frac{\psi_\mu(z)}{1+\psi_\mu(z)}=1, & \left(\frac{\psi_\mu(z)}{1+\psi_\mu(z)}\right)'(1/d)=d\alpha
\end{array}
$$
if and only if $\mu(\{d\})=1/\alpha$. In particular, if $d$ is an 
isolated atom of $\mu$, then both $F_\mu$ and $\psi_\mu/(1+\psi_\mu)
$ 
extend analytically around $d$.

\end{enumerate}

To conclude, let us note that if $\mu$ is the distribution 
of the self-adjoint random variable $y \in \mathcal A$ with respect to $\phi$,
then 
$$
G_\mu(z)=\phi\left((z-y)^{-1}\right),\quad z\not\in\sigma(y).
$$
This will be important in our study of norms of operators via
transforms. It follows from the above equality that
$\|y\|=\max\{\sup\textrm{supp}(\mu),-\inf\textrm{supp}(\mu)\}$,
so that $\|y\|$ can be described also as the maximum between
the largest $x\in\mathbb R$ in which $G_\mu$ is not analytic and
minus the smallest $x\in\mathbb R$ in which $G_\mu$ is not analytic.
If $y$ is a positive operator, then  
$\|y\|=\sup\textrm{supp}(\mu)$ and this number coincides 
with the largest $x\in\mathbb R$ in which $G_\mu$ is not analytic.

In terms of the transforms $F$ and $\psi/(1+\psi)$, we have the following
characterizations of $\|y\|$:
$$
\|y\|=\max(\{x\in\mathbb R\colon F_\mu(x)=0\}\cup\{x\in\mathbb R
\colon F_\mu\textrm{ not analytic in }x\}),
$$
and
\begin{eqnarray*}
\lefteqn{\|y\|^{-1}=}\\
& & \min(\{x\in\mathbb R\colon\psi_\mu(x)(1+\psi_\mu(x))^{-1}=1\}\cup
\{x\in\mathbb R\colon \psi_\mu(\cdot)(1+\psi_\mu(\cdot))^{-1}\textrm{ not analytic in }x\}).
\end{eqnarray*}
We shall denote
\begin{equation}
\label{def-bracket}
[\mu]:=\max \{|v| \colon v\in\textrm{supp}(\mu)\}.
\end{equation}

\subsection{The $(t)$-norm: definition}
\label{subsec:t-norm}

We introduce now a norm on $\R^k$ which will have a very important role to play in the description of the set $K_{n,k,t}$ in the asymptotic limit $n \to \iy$. 

\begin{definition}\label{def:t-norm}
For a positive integer $k$, embed $\R^k$ as a self-adjoint real 
subalgebra $\mathcal R$ of a $\mathrm{II}_1$ factor  $\mathcal A$ endowed with trace $\phi,$ 
so that $\phi((x_1,\dots,x_k))=(x_1+\cdots+x_k)/k$. Let $p_t$ be a projection of rank $t \in (0,1]$ in $\mathcal A$, free from $\mathcal R$. On the real vector space $\R^k$, we introduce the following norm, called the \emph{$(t)$-norm}:
\begin{equation}
	\normt{x}:=\norm{p_t x p_t}_{\infty},
\end{equation}
where the vector $x \in \R^k$ is identified with its image in $\mathcal R$.
\end{definition}

The fact that $\normt{\cdot}$ is indeed a norm deserves a proof, that we postpone to Lemma \ref{lem:properties-t-norm-without-R-S-transform}
in the next subsection.
Before that, we show that complex analysis stands as a powerful tool to study the distribution of $p_{t}xp_{t}$, and therefore of the $(t)$-norm.

Note that the distribution of the random variables $x$ and $p_t$ are, respectively $\mu_x = k^{-1}\sum_{i=1}^k \delta_{x_i}$ and $\mu_{p_t} = (1-t) \delta_0 + t \delta_1$. 
Therefore, in the framework of free probability and following the notation of Equation \eqref{def-bracket}, $\normt{x}=[\mu_{x}\boxtimes\mu_{p_{t}}]$ (recall definitions of operations $\boxplus$ and $\boxtimes$ from Section \ref{subsec:freeness}).

In the next proposition, we provide a free probabilistic description of the $(t)$-norm, which will turn out to be very useful. This result, first proved in \cite{NS-mult}, is contained in \cite{NS}, Lecture 14. 

\begin{proposition}\label{prop:t-norm-additive-conv}
The distribution $\mu_{t^{-1}p_txp_t}$ of the (non-commutative) random variable $t^{-1}p_txp_t$ in the $\mathrm{II}_1$ factor \emph{reduced} by the projection $p_t$ is related to the distribution $\mu_x$ of $x$ in the \emph{non-reduced} factor by the equation
\begin{equation}
	\mu_{t^{-1}p_txp_t}=\mu_x^{\boxplus 1/t},\quad t\in(0,1],
\end{equation}
where $\boxplus$ denotes the free additive convolution of Voiculescu. Hence, $\normt x$ is $t$ times the maximum between the upper bound and minus the lower bound of the support of the probability measure $\mu_x^{\boxplus 1/t}$.
\end{proposition}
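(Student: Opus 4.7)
The goal is to identify $\mu_y:=\mu_{t^{-1}p_txp_t}$ (the distribution in the reduced factor $p_t\mathcal Ap_t$ endowed with the normalized trace $\phi(\cdot)/t$) with $\mu_x^{\boxplus 1/t}$. My strategy is to compute the $S$-transform of $\mu_y$ via Voiculescu's multiplicativity of $S$-transforms under free multiplicative convolution, and then to translate the resulting identity back into the $R$-transform language, which is the natural framework for free additive convolution.

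From $\mu_{p_t}=(1-t)\delta_0+t\delta_1$, a direct computation gives $\psi_{p_t}(z)=tz/(1-z)$ and therefore $S_{p_t}(v)=(1+v)/(t+v)$. Since $p_t$ is a projection and $\phi((p_txp_t)^n)=\phi((xp_t)^n)$ by cyclicity of the trace, the freeness of $x$ and $p_t$ identifies the distribution of $p_txp_t$ in the full factor $\mathcal A$ with $\mu_x\boxtimes\mu_{p_t}$. Because $p_txp_t$ vanishes on the trace-$(1-t)$ kernel of $p_t$, this decomposes as
\begin{equation*}
\mu_x\boxtimes\mu_{p_t}=(1-t)\delta_0+t\mu_z,
\end{equation*}
where $\mu_z$ denotes the distribution of $z:=p_txp_t$ in the reduced factor. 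A short computation with Cauchy transforms using \eqref{eta-F&psi-G} then yields $\psi_{\mu_x\boxtimes\mu_{p_t}}(w)=t\psi_z(w)$, and the rescaling $y=z/t$ gives
\begin{equation*}
\psi_y(w)=\psi_z(w/t)=\tfrac{1}{t}\psi_{\mu_x\boxtimes\mu_{p_t}}(w/t).
\end{equation*}

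Inverting this relation and combining with $S_{\mu_x\boxtimes\mu_{p_t}}=S_xS_{p_t}$, a one-line cancellation using the explicit form of $S_{p_t}$ produces the clean identity
\begin{equation*}
S_y(u)=tS_x(tu).
\end{equation*}
I would then convert this to the $R$-transform language via the classical identity $S_\mu(zR_\mu(z))R_\mu(z)=1$: writing this identity for both $\mu_y$ and $\mu_x$ and comparing via $S_y(u)=tS_x(tu)$ forces $R_y(z)=t^{-1}R_x(z)$, which is the defining relation of $\mu_x^{\boxplus 1/t}$ (since $\boxplus$ is characterized by additivity of the $R$-transform). This gives $\mu_y=\mu_x^{\boxplus 1/t}$, and the remaining statement about $\normt x$ is immediate from the definition of $[\cdot]$.

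\textbf{Main obstacle.} The most delicate point is justifying the identity $\mu_{p_txp_t}=\mu_x\boxtimes\mu_{p_t}$ and the multiplicativity $S_{\mu_x\boxtimes\mu_{p_t}}=S_xS_{p_t}$ when $x$ is not assumed positive, since the standard proofs are cleanest for measures supported on $[0,+\infty)$. This can be bypassed in two equivalent ways: either by a spectral shift $x\mapsto x+c\mathbf 1$ with $c$ large enough to make $x$ positive (which shifts $\mu_y$ by $c/t$ and shifts $\mu_x^{\boxplus 1/t}$ by the same $c/t$, so the identity propagates through the shift and can be cancelled at the end), or by working throughout in the combinatorial framework of free cumulants, where both identities are direct consequences of the vanishing of mixed cumulants of free variables. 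A complete argument along these lines is carried out in Lecture~14 of \cite{NS}.
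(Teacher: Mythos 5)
Your argument is correct; it is worth noting, however, that the paper does not supply its own proof of this proposition --- it simply cites Nica and Speicher (\cite{NS-mult}, and Lecture 14 of \cite{NS}) and moves on. So there is no proof in the paper to compare with, and what you have written is essentially a reconstruction of the argument in those references. Concretely: you correctly reduce $\mu_{p_txp_t}$ to $\mu_x\boxtimes\mu_{p_t}$ via trace cyclicity, use the compression formula $\mu_x\boxtimes\mu_{p_t}=(1-t)\delta_0+t\mu_z$ to pass to the reduced factor, and the algebra $\psi_y(w)=\frac1t\psi_{\mu_x\boxtimes\mu_{p_t}}(w/t)$ combined with $S_{p_t}(v)=(1+v)/(t+v)$ does collapse to $S_y(u)=t\,S_x(tu)$, which is exactly the $S$-transform of $\mu_x^{\boxplus 1/t}$ (the dilation identity $S_{\mu^{\boxplus s}}(z)=\frac1s S_\mu(z/s)$ in disguise). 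Your handling of the final conversion to $R$-transforms via $R_\mu(z)S_\mu(zR_\mu(z))=1$ is correct, and the shift $x\mapsto x+c\mathbf 1$ is a clean way to avoid positivity issues, since it shifts both $\mu_{t^{-1}p_txp_t}$ and $\mu_x^{\boxplus 1/t}$ by the same constant $c/t$. If you want to match the cited reference more closely, the free-cumulant argument you mention at the end (the compression by a free projection scales all free cumulants by $t^{n-1}$, which is precisely what $\boxplus 1/t$ followed by rescaling does) is shorter and avoids the positivity detour entirely, but both routes are valid.
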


It is possible to express the distribution of $p_txp_t$ in terms of the distribution of $x$, after the method described in \cite{BB-mathz,BB-imrn}: 
\begin{proposition}
Denoting $G_\mu(z)=\int_\mathbb R(z-t)^{-1}\,d\mu(t)$ the Cauchy-Stieltjes transform of a measure $\mu$ and $F_\mu(z)=1/G_\mu(z)$, the following relations hold
\begin{equation}\label{functional}
F_{\mu_x^{\boxplus 1/t}}(z)=F_{\mu_x}(\omega_{1/t}(z)),\quad
\omega_{1/t}(z)=tz+(1-t)F_{\mu_x^{\boxplus 1/t}}(z),
\end{equation}
so that the function $\omega_{1/t}$ is the right inverse of the function $H_{1/t}(w)=\frac1tw+\left(1-\frac1t\right)F_{\mu_x}(w)$, for $\Im w>0$. Moreover, $\omega_{1/t}$ extends continuously to the closure
of the upper half-plane.
\end{proposition}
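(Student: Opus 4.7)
The plan is to derive the subordination identity directly from Voiculescu's $R$-transform calculus, extend it from a neighborhood of infinity to all of $\C^+$ by the identity theorem, and then invoke the classical boundary regularity of reciprocal Cauchy transforms. Write $\nu := \mu_x^{\boxplus 1/t}$ for brevity. Since the $R$-transform linearizes free additive convolution, we have $R_\nu = (1/t)\, R_{\mu_x}$ in a neighborhood of $0$, equivalently $R_{\mu_x} = t\, R_\nu$. Combining this with the standard identity $F_\rho^{-1}(w) = w + R_\rho(1/w)$, valid in a truncated Stolz angle at $\infty$ in $\C^+$ for any probability measure $\rho$ on $\R$, gives
\begin{equation*}
F_{\mu_x}^{-1}(w) - w = t\bigl(F_\nu^{-1}(w) - w\bigr), \qquad \text{i.e.,} \qquad F_{\mu_x}^{-1}(w) = t\, F_\nu^{-1}(w) + (1-t)\, w
\end{equation*}
near $\infty$. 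Substituting $w = F_\nu(z)$ yields $F_{\mu_x}^{-1}(F_\nu(z)) = tz + (1-t)\, F_\nu(z) =: \omega_{1/t}(z)$, and hence $F_{\mu_x}(\omega_{1/t}(z)) = F_\nu(z)$ on the corresponding neighborhood of $\infty$.

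Next I would upgrade this to an identity on all of $\C^+$. Since $t\in(0,1)$ and $\Im F_\nu \geq 0$ on $\C^+$, the formula $\omega_{1/t}(z) = tz + (1-t)\, F_\nu(z)$ defines an analytic self-map of $\C^+$ satisfying $\Im \omega_{1/t}(z) \geq t\,\Im z > 0$, so $F_{\mu_x} \circ \omega_{1/t}$ is analytic on all of $\C^+$. Since it agrees with $F_\nu$ on a non-empty open set, the identity theorem forces $F_{\mu_x}(\omega_{1/t}(z)) = F_\nu(z)$ throughout $\C^+$. The right-inverse property then drops out by direct calculation, using $(1/t)(1-t)+(1-1/t)=0$:
\begin{equation*}
H_{1/t}(\omega_{1/t}(z)) = \tfrac{1}{t}\omega_{1/t}(z) + \bigl(1-\tfrac{1}{t}\bigr) F_{\mu_x}(\omega_{1/t}(z)) = z + \Bigl[\tfrac{1-t}{t} + \tfrac{t-1}{t}\Bigr] F_\nu(z) = z.
\end{equation*}

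For the continuous extension of $\omega_{1/t}$ to $\overline{\C^+}$, the reduction is immediate: since $z \mapsto tz$ is trivially continuous, the question becomes the boundary regularity of $F_\nu$. As a Pick (Nevanlinna) function, $F_\nu$ extends continuously from $\overline{\C^+}$ to $\overline{\C^+}\cup\{\infty\}$ (equipped with the spherical topology), with finite real boundary values on the complement of $\textrm{supp}(\nu)$ except possibly at the isolated real points where $G_\nu$ vanishes, where $F_\nu$ takes the value $\infty$. This transfers directly to $\omega_{1/t}$. The main obstacle, and the most delicate step, is precisely this boundary extension: although the formula $\omega_{1/t}(z) = tz + (1-t)F_\nu(z)$ is algebraically clean, a careful use of the Julia--Carath\'eodory and Nevanlinna theory reviewed in the excerpt is required to ensure that the extension respects the topology of $\overline{\C^+}$ and to describe its boundary values accurately, since the later geometric applications of this proposition depend on controlling $\omega_{1/t}$ precisely on the real axis.
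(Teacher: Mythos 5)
The paper itself does not prove this proposition; it cites \cite{BB-mathz,BB-imrn} and presents the statement as a recollection of the Belinschi--Bercovici subordination framework for free convolution powers. So there is no internal proof to compare against, and your attempt should be judged purely on correctness.

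Your derivation of the subordination identity itself is correct and standard: the $R$-transform relation $R_{\mu_x^{\boxplus 1/t}} = \tfrac1t R_{\mu_x}$, the change of variables $F_\rho^{-1}(w)=w+R_\rho(1/w)$, the substitution $w=F_\nu(z)$, the observation that $\omega_{1/t}=tz+(1-t)F_\nu$ is a self-map of $\C^+$ (in fact $\Im\omega_{1/t}(z)\geq\Im z$ since $\Im F_\nu\geq\Im z$), and the identity-theorem extension from a neighborhood of infinity to all of $\C^+$. The algebraic check of $H_{1/t}\circ\omega_{1/t}=\mathrm{id}$ is also fine.

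The gap is in the last step. Your key assertion, ``as a Pick (Nevanlinna) function, $F_\nu$ extends continuously from $\overline{\C^+}$ to $\overline{\C^+}\cup\{\infty\}$'', is false as a general principle. A Pick function $F(z)=a+bz+\int\frac{1}{s-z}\,d\rho(s)$ has no reason to admit a continuous extension to the real line when the representing measure $\rho$ has a singular continuous part; the boundary values can fail to exist on a set of positive measure. Your sentence covers the boundary behaviour on the \emph{complement} of $\mathrm{supp}(\nu)$ (where $F_\nu$ is holomorphic by Schwarz reflection anyway), which is exactly the easy part, and silently skips what happens \emph{on} the support, which is the entire content of the claim. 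What makes the continuous extension true here is not that $F_\nu$ is a Pick function, but that $\nu=\mu_x^{\boxplus 1/t}$ is a free convolution power with exponent $1/t>1$: the subordination function $\omega_{1/t}$ (equivalently $F_\nu$, since $F_\nu=\frac{\omega_{1/t}-tz}{1-t}$) always extends continuously to $\overline{\C^+}$, by the Denjoy--Wolff/iteration argument of Belinschi \cite{aihp} (see also \cite{ptrf}). Alternatively, in the present very special situation where $\mu_x=k^{-1}\sum_i\delta_{x_i}$ is finitely supported, $F_{\mu_x}$ and hence $H_{1/t}$ are rational, and one can check directly that the right inverse $\omega_{1/t}$ has continuous (in fact piecewise real-analytic) boundary values, essentially as is done explicitly in the proof of Proposition~\ref{53} in the paper. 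Either of these would close the gap; invoking the wrong general statement about Pick functions does not.
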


\subsection{The $(t)$-norm: first properties}

We first prove properties about the $(t)$-norm that do not rely on complex analytic tools. 

\begin{lemma}\label{lem:properties-t-norm-without-R-S-transform}
The map $x\to \normt{x}$ defines indeed a norm. 
The $(t)$-norm $\normt{\cdot}$ has the following properties:
	\begin{enumerate}
	\item It is invariant under permutation of coordinates
		\begin{equation*}
			\normt{(x_1, x_2, \ldots, x_k)} = \normt{(x_{\sigma(1)}, x_{\sigma(2)}, \ldots, x_{\sigma(k)})} \qquad \forall \sigma \in S_k.
		\end{equation*}
	
		\item For all $s\geq 0$ (resp. $s\leq 0$) and for all vectors $x$ for which $\normt{x}$ is achieved at the {\em upper} 
		(resp. {\em lower})
		bound of the support of 
$\mu_x^{\boxplus 1/t}$,
		\begin{equation*}
			\normt{x + s(1^k)}=\normt{x} + s,
		\end{equation*}	
		(resp. $\normt{x - s(1^k)}=\normt{x} + s$).
\item The $(t)$-norm is determined by its restriction to the ordered probability simplex $\Delta_k^\downarrow$. 
\item
Whenever $t>1-\frac1k$ we have $\normt{x} = \norm{x}_\iy$

	\end{enumerate}
\end{lemma}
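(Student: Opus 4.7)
For parts (1) and (2), the norm properties and permutation invariance are direct: absolute homogeneity and the triangle inequality come from the operator norm on $\mathcal A$ via linearity of $x\mapsto p_txp_t$, while definiteness follows from Proposition \ref{prop:t-norm-additive-conv}---if $\normt{x}=0$ then $\mu_x^{\boxplus 1/t}=\delta_0$, and since $R_{\mu_x^{\boxplus 1/t}}=t^{-1}R_{\mu_x}$ the $R$-transform of $\mu_x$ vanishes, forcing $\mu_x=\delta_0$ and hence $x=0$. Permutation invariance holds because $\mu_x$ depends only on the multiset of coordinates of $x$, and freeness of $p_t$ and $\mathcal R$ then shows that $\mu_{p_txp_t}$ (whose maximum of support equals $\normt{x}$ in the tracial finite factor $p_t\mathcal Ap_t$) is likewise permutation symmetric.

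The shift formula (3) is a direct computation with free convolutions. Adding $s(1^k)$ to $x$ translates $\mu_x$ by $s$, and since $R_{\mu\boxplus\delta_s}=R_\mu+s$, combining with $R_{\mu^{\boxplus 1/t}}=t^{-1}R_\mu$ yields $\mu_{x+s(1^k)}^{\boxplus 1/t}=\mu_x^{\boxplus 1/t}\boxplus\delta_{s/t}$, i.e.\ a translation of $\mu_x^{\boxplus 1/t}$ by $s/t$. Writing $[\alpha,\beta]$ for a containing interval of $\mathrm{supp}(\mu_x^{\boxplus 1/t})$, the Proposition gives $\normt{x}=t\max(\beta,-\alpha)$. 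If $\normt{x}$ is achieved at the upper bound ($\beta\geq -\alpha$) and $s\geq 0$, the inequality $\beta+s/t\geq -\alpha-s/t$ persists, so $\normt{x+s(1^k)}=t(\beta+s/t)=\normt{x}+s$; the lower-bound case is symmetric.

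For (4), the plan is to recover $\beta_x:=\max\mathrm{supp}(\mu_x^{\boxplus 1/t})$ and $\alpha_x:=\min\mathrm{supp}(\mu_x^{\boxplus 1/t})$ from the restriction of $\normt{\cdot}$ to $\Delta_k^\downarrow$. By (2), assume $x$ is sorted. For any non-negative $y\in\R^k_+\setminus\{0\}$ the sorted vector $y/\sum y_i$ lies in $\Delta_k^\downarrow$, and since $y\geq 0$ implies $\alpha_y\geq 0$, homogeneity gives $\normt{y}=t\beta_y$; thus the restriction determines $\beta_y$ for every non-negative $y$. Applied to $y=x-x_k(1^k)\geq 0$, the translation identity of the previous paragraph produces $\beta_x=\beta_y+x_k/t$. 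A symmetric argument applied to $-(x-x_1(1^k))\geq 0$, using that negation reflects $\mu_x$ (and hence $\mu_x^{\boxplus 1/t}$) through zero, yields $\alpha_x$; the Proposition then determines $\normt{x}$.

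For (5), iterating $\|A+B\|\leq \|A\|+\|B\|$ for self-adjoint free elements along the Nica-Speicher semigroup shows $\mathrm{supp}(\mu_x^{\boxplus 1/t})\subset [x_k/t,x_1/t]$ (with $x_1\geq\cdots\geq x_k$), whence $\normt{x}\leq \max(x_1,-x_k)=\|x\|_\infty$. For the matching lower bound, the Bercovici-Voiculescu atom formula states that $\mu^{\boxplus s}$ has an atom of mass $s\mu(\{c\})-(s-1)$ at $sc$ whenever this is positive; applied with $s=1/t$ and $\mu_x(\{x_i\})\geq 1/k$, positivity becomes exactly $t>1-1/k$, forcing $\beta_x=x_1/t$ and $\alpha_x=x_k/t$, and hence $\normt{x}=\|x\|_\infty$. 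The only genuine subtlety lies in (4), where the one-sided information $\beta_y$ coming from $\Delta_k^\downarrow$ must be combined with the reflection trick to recover $\alpha_x$ for mixed-sign $x$.
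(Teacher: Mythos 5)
Your proof is correct. For the norm axioms, permutation invariance, the shift identity and the reduction to $\Delta_k^\downarrow$ you follow essentially the paper's route, merely rephrased through $R$-transforms where the paper works directly with operators: the paper gets definiteness from the computation $\phi(p_txp_txp_t)=t(t\phi(x^2)+(1-t)\phi(x)^2)$ and the shift identity from $p_t(x+s1)p_t=p_txp_t+sp_t$ plus functional calculus, while you pass through $R_{\mu^{\boxplus 1/t}}=t^{-1}R_{\mu}$ and $R_{\mu\boxplus\delta_s}=R_\mu+s$; both are sound, and your treatment of item (3) is in fact more detailed than the paper's one-line ``direct consequence of (2)''. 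The genuine divergence is in item (4): the paper writes $x=a_1q_1+\cdots+a_kq_k$ and uses the projection-lattice bound $\phi(\inf\{p_t,q_k\})\ge\phi(p_t)+\phi(q_k)-1>0$ together with $p_txp_t\ge a_k\inf\{p_t,q_k\}$, whereas you combine the support containment $\mathrm{supp}(\mu_x^{\boxplus 1/t})\subset[x_k/t,x_1/t]$ with the atom-survival formula \eqref{atoms}, under which atoms of $\mu_x$ (each of mass at least $1/k$) persist precisely when $t>1-\frac1k$. Your route is the one the paper itself exploits later (e.g.\ in the proof of Theorem \ref{convex-norm}) and has the advantage of making the threshold $t>1-\frac1k$ transparent. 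The only step worth tightening is the support containment: for non-integer $1/t$ it is cleanest not by ``iterating the triangle inequality along the semigroup'' but by compressing the operator inequality $x_k\cdot 1\le x\le x_1\cdot 1$ by $p_t$ and invoking Proposition \ref{prop:t-norm-additive-conv}.
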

\begin{proof}
The fact that $\normt{\lambda x}=|\lambda |\normt{x}$ follows from the definition.
The triangle inequality follows from 
$$\norm{p_t (x+y) p_t}_{\infty}=
\norm{p_t x p_t+p_t y p_t}_{\infty}\leq \norm{p_t x p_t}+\norm{p_t y p_t}=\normt{x}+\normt{y}.$$
Now, assume that
$\normt{x}=0$. This is equivalent to $p_{t}xp_{t}=0$. In turn, this is equivalent to $p_{t}xp_{t}xp_{t}=0$, because
$x=x^{*}$. This is equivalent to $\phi( p_{t}xp_{t}xp_{t})=0$ because $\phi$ 
is faithful and $p_{t}xp_{t}xp_{t}$ is positive. 
But a direct computation shows that 
$\phi( p_{t}xp_{t}xp_{t})=t(t\phi (x^{2})+(1-t)\phi (x)^{2})$. Since $t\in [0,1]$, this can be zero iff $t=0$ or $x=0$.

The invariance under permutation follows from the fact that the moments of $p_{t}xp_{t}$ are symmetric functions in 
$x_{i}$, so this proves point (1).

Point (2) follows from the fact that
$p_{t}(x+s1)p_{t}= p_{t}xp_{t}+sp_{t}$ and from functional calculus.

The third point is a direct consequence of the second one. 

Writing $x=a_1q_1+\cdots+a_kq_k$, $x$ reaches its norm $a_k$ on a
projection $q_k$ of trace at least $1/k$, so $\phi(\inf\{p_t,q_k\})\ge
\phi(p_t)+\phi(q_k)-1>0$, and hence 
$p_txp_t\ge a_k\inf\{p_t,q_k\}$, so $\|p_txp_t\|=\|x\|_\infty=a_k$.
\end{proof}
It might be worth 
noting that with a little extra effort one can show that the equality 
$\|x\|_{(t)}=\|x\|_\infty$ from the above proposition holds also when 
$t=1-\frac1k$.

In general it is difficult to explicitly compute the $(t)$-norm. We gather in the next proposition some important properties that 
can be obtained with methods of complex analysis.

\begin{proposition}\label{53} The $(t)$-norm $\normt{\cdot}$ has the following properties:
	\begin{enumerate}
		\item[1.] For any $x\in\mathbb R^k$,
		\begin{equation}
			\frac1t\normt{x} = \frac1tw_x+\left(1-\frac1t\right)F_{\mu_x}(w_x),
		\end{equation}	
where $w_x$ is the largest in absolute value solution to the equation
\begin{equation}
F_{\mu_x}(w)\left(F_{\mu_x}'(w)-\frac{1}{1-t}\right)=0.
\end{equation}
Moreover, the map $t\mapsto\|x\|_{(t)}$ is non-decreasing on 
$(0,1]$.
	\item[2.] For all $j=1, 2, \ldots, k$, one has
		\begin{equation}\label{eq:norm2proj}
			\normt{(1^j0^{k-j})}=
			\begin{cases}
				t +u-2tu +2\sqrt{tu (1-t)(1-u)}  &\text{ if } t + u < 1,\\
				1  &\text{ if } t + u \geq 1,
			\end{cases}
		\end{equation}
		where $u = j/k$.
	\end{enumerate}
\end{proposition}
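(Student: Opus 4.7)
The plan splits into the two parts of the statement. For Part 1 I start from Proposition \ref{prop:t-norm-additive-conv}, which reduces the task to computing $[\mu_x^{\boxplus 1/t}]$, combined with the characterization recalled at the end of Section \ref{sec:t-norm}: for any compactly supported probability measure $\mu$ on $\R$, the quantity $[\mu]$ equals the largest $|z|$, $z\in\R$, such that either $F_\mu(z)=0$ or $F_\mu$ fails to be analytic at $z$. Using the subordination identity \eqref{functional}, $F_{\mu_x^{\boxplus 1/t}}(z)=F_{\mu_x}(\omega_{1/t}(z))$ with $\omega_{1/t}$ the right inverse of $H_{1/t}$, I translate these two conditions on $z$ to conditions on $w=\omega_{1/t}(z)$: the zero condition becomes $F_{\mu_x}(w)=0$, while a failure of analyticity of $F_{\mu_x^{\boxplus 1/t}}$ at $z$ outside the support of $\mu_x^{\boxplus 1/t}$ corresponds to a branch-point singularity of $\omega_{1/t}$, i.e.\ to a critical point $H_{1/t}'(w)=0$. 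A direct computation gives $H_{1/t}'(w)=[1-(1-t)F_{\mu_x}'(w)]/t$, so the two conditions combine into the single product equation of the statement, and $\normt{x}/t=[\mu_x^{\boxplus 1/t}]=H_{1/t}(w_x)$ at the extremal $w_x$ yields the claimed formula. The monotonicity of $t\mapsto\normt{x}$ I would obtain by a coupling argument: realize the projections $\{p_s\}_{s\in[0,1]}$ \emph{simultaneously} as indicator functions $\ind_{[0,s]}$ inside a free copy of $L^\infty([0,1])$ taken free from $\R^k$. Each $p_s$ is then a trace-$s$ projection free from $x$, hence realizes $\|x\|_{(s)}$, and the nesting $p_{t_1}\le p_{t_2}$ gives $\|p_{t_1}xp_{t_1}\|=\|p_{t_1}(p_{t_2}xp_{t_2})p_{t_1}\|\le\|p_{t_2}xp_{t_2}\|$.

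For Part 2, I specialize Part 1 to $\mu_x=u\delta_1+(1-u)\delta_0$ with $u=j/k$. A direct computation gives $F_{\mu_x}(z)=z(z-1)/(z-(1-u))$, so the zero condition $F_{\mu_x}(w)=0$ is solved by $w\in\{0,1\}$, while $F_{\mu_x}'(w)=1/(1-t)$ reduces after clearing denominators to the quadratic $w^2-2(1-u)w+(1-u)(t-u)/t=0$, with roots $w_\pm=(1-u)\pm\sqrt{u(1-u)(1-t)/t}$. When $t+u<1$ one checks $w_+>1$ and $|w_+|>|w_-|$, so $w_x=w_+$; substituting into $H_{1/t}(w_+)=w_+/t+(1-1/t)F_{\mu_x}(w_+)$ and simplifying produces the stated formula $\normt{x}=t+u-2tu+2\sqrt{tu(1-t)(1-u)}$. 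When $t+u\ge 1$ the critical points $w_\pm$ both fall inside $[-1,1]$, so the largest-$|w|$ solution is $w_x=1$, which is a zero of $F_{\mu_x}$ and therefore gives $\normt{x}=t\cdot H_{1/t}(1)=1$. The case $t+u\ge 1$ also admits a direct operator-theoretic proof: identifying $x$ with a projection $p_u$ free from $p_t$, the meet $p_t\wedge p_u$ has trace $t+u-1>0$, so there exists a unit vector fixed by both projections, forcing $\|p_tp_up_t\|=1$.

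The main obstacle I anticipate is rigorously justifying that the largest-in-absolute-value solution $w_x$ of the product equation really produces the spectral-radius value $[\mu_x^{\boxplus 1/t}]$. Since $F_{\mu_x}(w)\sim w$ at infinity, $H_{1/t}(w)\sim w$ as $|w|\to\infty$, and hence $H_{1/t}$ is eventually monotone on each of the two unbounded components of $\R\setminus\mathrm{supp}(\mu_x)$; but the product equation generally has further solutions located in the bounded intervals between consecutive atoms of $\mu_x$, and one must verify that every such intermediate solution maps to a $z$-value of strictly smaller modulus. Tracking $H_{1/t}$ on each bounded interval between consecutive poles of $F_{\mu_x}$, using the monotonicity between successive critical points, should suffice, but this bookkeeping is the most delicate piece of the argument.
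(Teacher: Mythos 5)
Your proposal is correct, and for the central formula of Part 1 it follows essentially the same route as the paper: reduce to $[\mu_x^{\boxplus 1/t}]$ via Proposition \ref{prop:t-norm-additive-conv}, then locate the edge of the support through the subordination relation \eqref{functional}, identifying the loss of analyticity of $F_{\mu_x^{\boxplus 1/t}}$ with a critical point of $H_{1/t}$ (equivalently $F_{\mu_x}'(w)=\frac{1}{1-t}$) and the atom case with a zero of $F_{\mu_x}$. The "delicate bookkeeping" you flag at the end is precisely the point the paper also treats lightly: it only tracks the \emph{first} obstruction to continuing $\omega_s$ along $\mathbb R$ from $+\infty$, namely $H_s(v(s))$ with $v(s)$ the \emph{largest} critical point, rather than comparing all intermediate solutions; so you are not missing anything the paper supplies. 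Where you genuinely diverge is in the two remaining claims, and in both cases your route is arguably cleaner. For monotonicity in $t$, the paper differentiates implicitly, obtaining $\partial_s\bigl(x(s)/s\bigr)=-F_{\mu_x}(v(s))/s^2$ and then arguing by contradiction that $F_{\mu_x}(v(s))\ge0$ (plus a separate remark about the norm possibly switching from the upper to the lower edge of the support); your coupling argument --- realizing all $p_s$ simultaneously as $\ind_{[0,s]}$ inside a diffuse abelian algebra free from $\mathcal R$, so that $p_{t_1}\le p_{t_2}$ gives $\norm{p_{t_1}xp_{t_1}}=\norm{p_{t_1}(p_{t_2}xp_{t_2})p_{t_1}}\le\norm{p_{t_2}xp_{t_2}}$ --- is elementary, avoids the sign analysis entirely, and is insensitive to which edge of the support achieves the norm; the only things to note are that such a simultaneous free realization exists (free product construction) and that $\normt{\cdot}$ is realization-independent, both standard. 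For Part 2 the paper simply cites \cite{voiculescu-dykema-nica}, Example 3.6.7, whereas you derive the formula from Part 1; I checked your computation ($F_{\mu_x}(z)=z(z-1)/(z-(1-u))$, the quadratic $w^2-2(1-u)w+(1-u)(t-u)/t=0$, the threshold $w_+>1\iff t+u<1$, and the final simplification to $t+u-2tu+2\sqrt{tu(1-t)(1-u)}$) and it is correct, as is the lattice-theoretic shortcut $\phi(p_t\wedge p_u)\ge t+u-1$ for the saturated regime (with the boundary case $t+u=1$ handled by continuity).
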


\begin{proof}

As it is more natural in probabilistic terms to do it, we shall 
make the change of parameter $s=1/t$. 
Note that in terms of probability measures, $\mu_x$ is
purely atomic and compactly supported, hence $G_{\mu_x}$ is 
a rational function analytic on a neighbourhood of infinity which
maps $\mathbb R\cup\{\infty\}$ into itself.
Moreover, the radius of convergence around infinity for $G_{\mu_x}$
equals $\|x\|_\infty$ (in the sense that $G_{\mu_x}(z)=
\sum_{n=0}^\infty\left(\int t^nd\mu_x(t)\right)z^{-n-1}$ for 
$|z|>\|x\|_\infty$). It follows that $F_{\mu_x}$ is also a 
rational function which maps  $\mathbb R\cup\{\infty\}$ into itself.
Moreover the Nevanlinna representation \cite[Equation 3.3]{akhieser}
of $F_{\mu_x}$  reads
$$
F_{\mu_x}(z)=a+z+\int_\mathbb R\frac{1}{t-z}\,d\rho(t),
$$
where $a=-\int t\,d\mu_x(t)$ and $\rho$ is a compactly supported
purely atomic positive measure on the real line with total mass
$\rho(\mathbb R)=\textrm{VAR}(\mu_x)$. A direct
computation shows that $\|x\|_\infty$ is the largest, in absolute
value, solution of the equation $F_{\mu_x}(v)=0$, and, moreover,
$F_{\mu_x}(z)-z$ is analytic around infinity, with a radius of
convergence {\em strictly} greater than the radius of convergence
of $G_{\mu_x}$ (in the sense that $F_{\mu_x}(z)-z$ is analytic
on the complement of a disc of radius strictly smaller than 
the one corresponding to $G_{\mu_x}$). This last statement is 
clearly true for any probability $\mu$ for which
$[\mu] = \max\{|v|\colon v\in\textrm{supp}(\mu)\}$ is reached at an 
isolated atom of $\mu$.

Thus, as $\|p_txp_t\|=\frac1s\max\{|a|\colon a\in{\mathrm{supp}}(\mu_x^{\boxplus s})\},$ it follows that $\normt{x}/t$ coincides with the largest in absolute value real number $v$ so that either $F_{\mu_x^{\boxplus s}}(v)=0$ or $F_{\mu_x^{\boxplus s}}$ is not analytic in $v$,
with the first case corresponding to the situation in which the
maximum is reached at an isolated atom of $\mu_x^{\boxplus s}$.
The first statement follows from the above observation and from 
 the Definition \ref{def:t-norm} and the Proposition \ref{prop:t-norm-additive-conv}.

Denote $J$ the interval in $\mathbb R$ containing arbitrarily
large positive numbers on which $F_{\mu_x}$ is analytic; clearly,
$J\supseteq(\|x\|_\infty,+\infty)$. Also, denote $J_s$ the 
similar interval corresponding to $F_{\mu_x^{\boxplus s}}$.
From the Nevanlinna representation, we  gather the following:
\begin{itemize}
\item For any $s\ge 1$, $z\in J_s$, 
$$F_{\mu_x^{\boxplus s}}(z)\leq z-\int t\,d\mu_x^{\boxplus s}(t),\quad F_{\mu_x^{\boxplus s}}'(z)>1,
\quad F_{\mu_x^{\boxplus s}}''(z)<0.$$
\item 
If $(\mu_x^{\boxplus s})^{\rm ac}$ denotes the (necessarily
non-zero whenever $s>1$) absolutely continuous part of $\mu_x^{\boxplus s}$,
then 
\begin{eqnarray*}
\inf J_s & = & 
\max\{v\colon v\in{\mathrm{supp}}(\mu_x^{\boxplus s})^{\rm ac}\}\\
& = &
\max\{v\colon F_{\mu_x^{\boxplus s}}\textrm{ not analytic in }v\}\\
& = &
\max\{v\colon \omega_s\textrm{ not analytic in }v\};
\end{eqnarray*}
\item Let us denote $x(s)=\inf J_s$. Then 
$$
x(s)=sv(s)+(1-s)F_{\mu_x}(v(s)),\quad s\ge1,
$$
where $v(s)$ is the largest solution of the equation $F_{\mu_x}'
(v)=\frac{s}{s-1}$.
\end{itemize}
Only the last item needs some justification: it follows from 
equation \eqref{functional} that the domains of analyticity of
$\omega_s$ and $F_{\mu_x^{\boxplus s}}$ coincide.
Moreover, $\omega_s$ being the right inverse of $H_s$,
it follows that $H_s(\omega_s(z))=z$ for all $z\in J_s$
and $\omega_s(H_s(z))=z$ for all $z\in H_s(J_s)$. 
Computing the derivative $H_s'(z)=s+(1-s)F_{\mu_x}'(z)$
and using the first item above, it follows  that 
the first obstacle for the analytic extension of $\omega_s$
along $\mathbb R$ coming from $+\infty$ is the point $H_s(v(s))$
with $v(s)$ described in the last item above. Then,
$x(s)=H_s(v(s))=sv(s)+(1-s)F_{\mu_x}(v(s)).$

Elementary implicit differentiation gives 
$$
x'(s)=v(s)+sv'(s)-F_{\mu_x}(v(s))+(1-s)F_{\mu_x}'(v(s))
v'(s)=v(s)-F_{\mu_x}(v(s)).$$
We have used above the fact that $F_{\mu_x}'
(v(s))=\frac{s}{s-1}$. Then

\begin{eqnarray*}
\partial_s\left(\frac{x(s)}{s}\right)& = & 
\frac{sx'(s)-x(s)}{s^2}\\
& = &
\frac{sv(s)-sF_{\mu_x}(v(s))-sv(s)-(1-s)F_{\mu_x}(v(s))}{s^2}\\
& = & 
-\frac{F_{\mu_x}(v(s))}{s^2}.
\end{eqnarray*}
As noted, if $\normt{x}$ is achieved at the upper bound of the 
support of the distribution of $p_txp_t$, then $\normt{x}=
\frac{x(s)}{s}$ whenever $\normt{x}$ is not achieved at an atom
of $\mu_x^{\boxplus s}$.

To complete the proof, we observe that without loss of generality, we 
may assume that $\normt{x}$ is achieved at the upper bound of the 
support of our measure. If this upper bound coincides with an 
atom of the measure, then we have already seen in Lemma
\ref{lem:properties-t-norm-without-R-S-transform} that 
$\normt{x}=\|x\|_\infty.$ If that is not the case, then
$\normt{x}=\frac{x(s)}{s}$. We claim that $F_{\mu_x}(v(s))\ge0$.
Indeed, if $F_{\mu_x}(v(s))<0$, then there must be some point 
$z_0>v(s)$ so that $F_{\mu_x}(z_0)=0$ and hence $H_s(z_0)=sz_0$. 
But $v(s)=\omega_s(x(s))$, $\omega_s$ is defined right of $x(s)$
and $sz_0>x(s)$, hence 
$z_0=\omega_s(sz_0)=\frac1s(sz_0)+\left(1-\frac1s\right)F_{
\mu_x^{\boxplus s}}(sz_0)$ implies 
$F_{\mu_x^{\boxplus s}}(sz_0)=0$, so $sz_0$ is an atom for 
$\mu_x^{\boxplus s}$, a contradiction. Thus, the function
$s\mapsto \normt{x}$ is non-increasing, strictly decreasing
when $\normt{x}$ is reached at the boundary of the support of the
absolutely continuous part of $\mu_x^{\boxplus s}$.

Note that our proof does not exclude the possibility that,
as $t$ decreases, $\normt{x}$ could switch from being achieved
at the upper bound of the support of $\mu_x^{\boxplus s}$
to being achieved at its lower bound. However, the argument 
above still holds even if such a switch happens.

 For the last item, see \cite{voiculescu-dykema-nica}, example 3.6.7. 
 This is one of the few cases when an exact expression for the $(t)$-norm is known and it has been heavily used in \cite{collins-nechita-2}.
\end{proof}

In Figure \ref{fig:k2}, the ball for the $(t)$-norm is plotted for $k=2$. Note that the shape of the ball depends only on the parameter 
$$
x_t=\begin{cases}
\frac{1}{2\sqrt{t(1-t)}}, \quad &\text{ if } t < \frac{1}{2},\\
1, \quad &\text{ if } t \geq \frac{1}{2}
\end{cases}
$$
whose dependence in $t$ is also plotted in the right-hand side subfigure.
\begin{figure}[htbp]
\centering
\subfigure[]{\includegraphics[width=6cm]{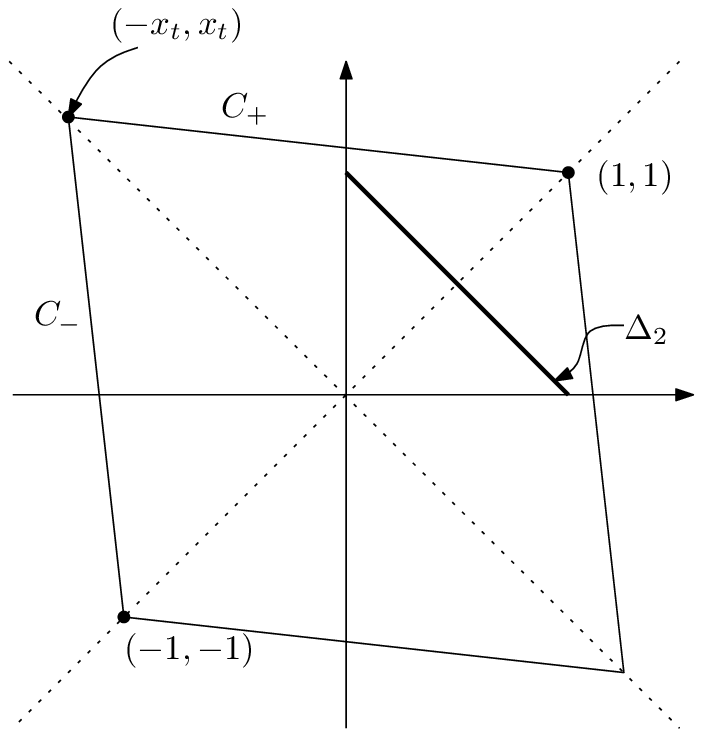}}\qquad\qquad
\subfigure[]{\includegraphics[width=6cm]{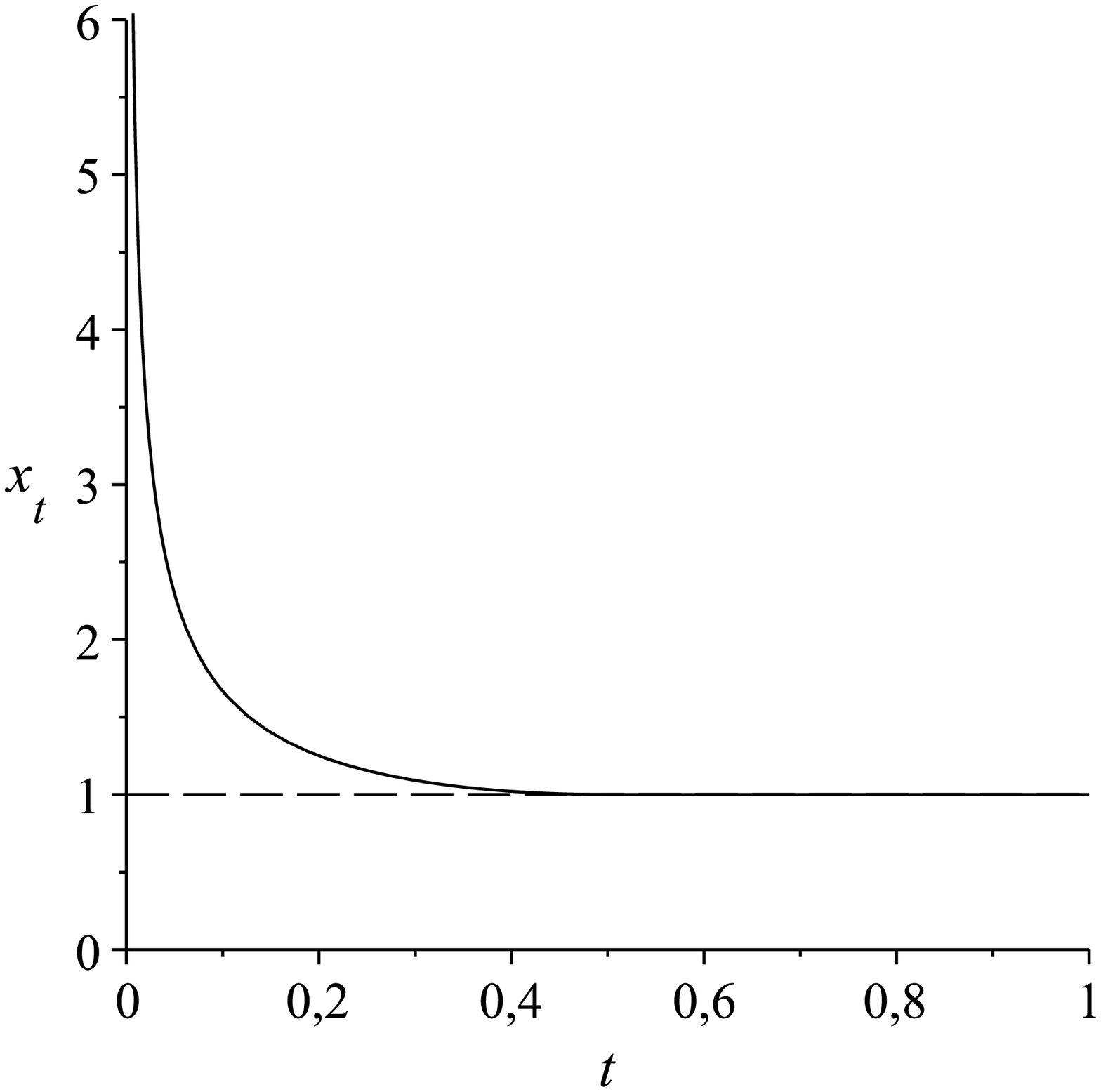}}\\
\caption{The unit ball for the $(t)$-norm in $\R^2$.}
\label{fig:k2}
\end{figure}

Let us mention that the solution to the equation $F_{\mu_x}(w)=0$ corresponds to an atom, that is,
if the solution $w_x$ is of $F_{\mu_x}(w)=0$, the norm t is
achieved either at an atom of $\mu_x^{\boxplus t}$ or at a point where
the density of this measure is infinite. 
Atoms of the probability measure $\mu_x^{\boxplus 1/t}$ have been fully described in \cite{BB-mathz} by the formula
\begin{equation}\label{atoms}
\mu_x^{\boxplus 1/t}\left(\{a\}\right)=\max\left\{0,\frac1t\mu_x\left(\{ta\}\right)-\frac1t+1\right\}.
\end{equation}

Let us record for further use that the above implies that
when $t<\frac1k$ the measure $\mu_x^{\boxplus 1/t}$ is necessarily absolutely continuous with respect to the Lebesgue measure on $\mathbb R$.

\subsection{The $(t)$-norm: continuity}

This section contains a technical result for the continuity of the $(t)$-norm in $t$ and of the $\boxtimes$ operation. Proposition \ref{prop:boxtimes-cont} is the main result here and it has independent interest in free probability. In the rest of this paper, we shall use a simpler incarnation of this result in the form of Corollary \ref{cor:boxplus-cont-util}.

\begin{proposition}\label{prop:t-norm-cont}
Assume that $\mu$ is a compactly supported probability measure on $[0,
+\infty)$. Then the map $[1,+\infty)\ni t\mapsto[\mu^{\boxplus t}]\in
(0,+\infty)$ is continuous, algebraic outside a bounded discrete
subset of $(1,+\infty)$.
Moreover,
\begin{equation}\label{estimate}
0<[\mu^{\boxplus t+\varepsilon}]-[\mu^{\boxplus t}]<
t\left(\sqrt{\varepsilon\textrm{VAR}(\mu)}+
\varepsilon\textrm{VAR}(\mu)\right),\quad t\ge1, \varepsilon>0.
\end{equation}
\end{proposition}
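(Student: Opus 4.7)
My plan is to build on the analytic machinery developed in the proof of Proposition \ref{53}. Recall that for $t>1$ the quantity $[\mu^{\boxplus t}]$ is attained either (i) at an atom of $\mu^{\boxplus t}$, in which case formula \eqref{atoms} gives it as $t a^*(t)$, where $a^*(t)$ is the largest atom of $\mu$ whose $\mu$-mass exceeds $1-1/t$; or (ii) at the upper edge of the absolutely continuous part of $\mu^{\boxplus t}$, in which case
\[
[\mu^{\boxplus t}]=H_t(v(t))=tv(t)+(1-t)F_\mu(v(t)),
\]
where $v(t)$ is the largest real solution of $F_\mu'(v)=t/(t-1)$ and $H_t(w)=tw+(1-t)F_\mu(w)$. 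The global quantity $[\mu^{\boxplus t}]$ is then the maximum of these two candidates.

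For the \emph{continuity} and the \emph{algebraic} statements, I would argue separately for each candidate. The atomic candidate $ta^*(t)$ is piecewise linear in $t$, with breakpoints only at the finitely many values $t_j=1/(1-\mu(\{a_j\}))$ corresponding to atoms $a_j$ of $\mu$, and is therefore continuous and trivially algebraic between breakpoints. For the continuous candidate, the defining system
\[
F_\mu'(v(t))=\frac{t}{t-1},\qquad R(t)=tv(t)+(1-t)F_\mu(v(t)),
\]
is analytic in $t$ by the implicit function theorem, since along the relevant branch $F_\mu''(v(t))\ne 0$; if $\mu$ is atomic with finite support then $F_\mu$ is rational and the system is polynomial in $(v,R,t)$, giving algebraicity outside a finite set of critical values. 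The maximum of the two candidates remains continuous everywhere because, at any switchover point, both candidates agree by the very definition of $[\mu^{\boxplus t}]$, and the set of possible switchover points is a bounded discrete subset of $(1,+\infty)$.

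For \emph{strict monotonicity} (the lower bound $0<[\mu^{\boxplus t+\varepsilon}]-[\mu^{\boxplus t}]$), I would use the derivative formula already derived inside the proof of Proposition \ref{53}: in the continuous regime,
\[
\frac{d}{dt}[\mu^{\boxplus t}]=v(t)-F_\mu(v(t))=m_1(\mu)+\int_{\mathbb R}\frac{d\rho(\tau)}{v(t)-\tau}>0,
\]
where I use the Nevanlinna representation $F_\mu(v)=v-m_1(\mu)+\int(\tau-v)^{-1}d\rho(\tau)$ with $\rho(\mathbb R)=\mathrm{VAR}(\mu)$; in the atomic regime the derivative equals $a^*(t)>0$. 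At switchover points, monotonicity persists because both one-sided derivatives are positive.

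For the \emph{upper estimate}, my strategy is to reduce to the base point $t=1$ by rescaling: applying the same constructions to the probability measure $\mu^{\boxplus t}$, whose variance is $t\,\mathrm{VAR}(\mu)$, and writing $\mu^{\boxplus t+\varepsilon}=(\mu^{\boxplus t})^{\boxplus (1+\varepsilon/t)}$, the problem reduces to bounding $[\nu^{\boxplus 1+\delta}]-[\nu]$ in terms of $\delta$ and $\mathrm{VAR}(\nu)$ for $\delta=\varepsilon/t$ and $\nu=\mu^{\boxplus t}$. The main ingredient will then be a Cauchy--Schwarz bound applied to the Nevanlinna measure $\rho_\nu$ of $\nu$: from the critical-point constraint $\int(v-\tau)^{-2}d\rho_\nu(\tau)=1/\delta$ I obtain
\[
\int\frac{d\rho_\nu(\tau)}{v-\tau}\le\sqrt{\mathrm{VAR}(\nu)/\delta},
\]
and combining this with the elementary inequality $[\nu^{\boxplus 1+\delta}]-[\nu]\le v(1+\delta)-[\nu]+\delta\cdot(\text{remainder})$ (coming from expanding $H_{1+\delta}$ around $v=[\nu]$) produces a bound of the shape $\sqrt{\delta\mathrm{VAR}(\nu)}+\delta\mathrm{VAR}(\nu)$. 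Undoing the rescaling $\delta=\varepsilon/t$, $\mathrm{VAR}(\nu)=t\mathrm{VAR}(\mu)$ converts this to the claimed bound $t(\sqrt{\varepsilon\mathrm{VAR}(\mu)}+\varepsilon\mathrm{VAR}(\mu))$.

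The main obstacle I anticipate is the upper estimate: carrying the derivative bound $R'(t)=v(t)-F_\mu(v(t))$ past the singularity at $t=1$ in a way that produces exactly the square-root-in-$\varepsilon$ behaviour, and correctly tracking the $t$-dependent prefactor. The rescaling trick above is the cleanest route I see, but it requires a careful analysis of how the local expansion of $H_t$ near $v=[\mu^{\boxplus t}]$ interacts with the Julia--Carath\'eodory behaviour of $F_{\mu^{\boxplus t}}$ at the edge of support.
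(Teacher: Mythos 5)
Your plan mirrors the paper's proof closely: the same decomposition into atomic versus absolutely continuous regimes via the subordination function $H_t$ and equation \eqref{atoms}, the same use of the Nevanlinna representation of $F_\mu$ together with a Cauchy--Schwarz bound on $\int (s-w)^{-1}\,d\rho(s)$ against the critical-point constraint $\int (s-w)^{-2}\,d\rho(s)=1/(t-1)$, and the same rescaling $\mu^{\boxplus(t+\varepsilon)}=(\mu^{\boxplus t})^{\boxplus(1+\varepsilon/t)}$ with $\textrm{VAR}(\mu^{\boxplus t})=t\,\textrm{VAR}(\mu)$ to pass from the base case to the general estimate. The only presentational difference is that you phrase the lower bound via the $t$-derivative $v(t)-F_\mu(v(t))$ from Proposition~\ref{53}, whereas the paper bounds the increment $[\mu^{\boxplus t}]-[\mu]$ directly; these are equivalent uses of the same Nevanlinna identity.
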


\begin{proof}
As noted before, $[\mu^{\boxplus t}]$ is the largest positive number 
where either $F_{\mu^{\boxplus t}}$ is not analytic, or 
$F_{\mu^{\boxplus t}}$ takes the value zero.
We shall use equations 
\eqref{functional}
in order to analyze this number. It follows easily that 
$F_{\mu^{\boxplus t}}$ is not analytic in $x_0$ if and only if
$\omega_t$ is not analytic in $x_0$. This latter function is the
right inverse of
$$
H_t(w)=tw+(1-t)F_\mu(w)=w+(t-1)\int_{[0,+\infty)} s\,d\mu(s)+(1-t)
\int_{[0,+\infty)}\frac{1}{s-w}\,d\rho(s),
$$
according to the Nevanlinna representation of $F_\mu$.

One can see directly that for any $w$ in the interval
of analyticity of $H_t$ included in $([\mu],+\infty)$,
we have $H_t(w)>w$, $H_t'(w)=1+(1-t)
\int_{}\frac{1}{(s-w)^2}\,d\rho(s).$ For simplicity, we shall denote
$x_t$ the largest point in the real line in which $\omega_t$
is not analytic. Thus, $H_t$ maps the interval 
$[\max\{(H_t')^{-1}(\{0\}),[\rho]\},+\infty)$ bijectively onto 
$[x_t,+\infty)$. For $t>1$ large enough, it is clear that
$[\max\{(H_t')^{-1}(\{0\}),[\rho]\},+\infty)=
[\max(H_t')^{-1}(\{0\}),+\infty)$, and so the correspondence 
$t\mapsto\max(H_t')^{-1}(\{0\})$ is clearly algebraic (in fact
analytic). The relation $H_t(\omega_t(x))=x$ implies that
$x_t=H_t(\max(H_t')^{-1}(\{0\})),$ which is an analytic function.
As $t$ decreases towards 1, it may happen 
(whenever $\lim_{w\downarrow[\rho]}\int_{}\frac{1}{(s-w)^2}\,d\rho(s)<+
\infty$)
that $\max(H_t')^{-1}(\{0\})$ either does not exist, or is no greater
than $[\rho]$. We shall note that in this case there is a $t_0=
1+\left(\lim_{w\downarrow[\rho]}\int_{}\frac{1}{(s-w)^2}\,d\rho(s)\right)^{-1}$ so that the function $t\mapsto x_t$ is analytic
on $(t_0,+\infty)$ and extends continuously to $t_0$. On the 
interval $[1,t_0]$ we have, by the same relation $H_t(\omega_t(x))=x$,
$$
x_t=H_t([\rho])=t[\rho]+(1-t)\lim_{w\downarrow[\rho]}F_\mu(w),
$$
which is again an analytic (linear!) map of $t$. We note that 
$\lim_{w\downarrow[\rho]}F_\mu(w)$ must be finite as long as 
$\lim_{w\downarrow[\rho]}\int_{}\frac{1}{(s-w)^2}\,d\rho(s)<+\infty$.

This has determined the analyticity of the correspondence between
$t$ and the largest point of non-analyticity of $\omega_t$,
and hence of $F_{\mu^{\boxplus t}}$. We have however remarked at the beginning of our proof that this point does not necessarily coincide 
with $[\mu^{\boxplus t}]$, and that moreover, the case in which it does not 
coincide corresponds to the case of an isolated atom of 
$\mu^{\boxplus t}$. Atoms of $\mu^{\boxplus t}$ have been 
however described in equation \eqref{atoms}; it follows that
the correspondence remains linear for $t$ in the interval 
$[1,(1-\mu(\{a\}))^{-1}]$. Moreover, when 
$t=(1-\mu(\{a\}))^{-1}$, we have $H_t'(a)=t+(1-t)F_\mu'(a)=
t+(1-t)/\mu(\{a\})=(1-\mu(\{a\}))^{-1}+(1-(1-\mu(\{a\}))^{-1})/\mu(\{a\})=0$ (derivatives understood either in their proper sense, or in the 
Julia-Carath\'eodory sense), so at $t=(1-\mu(\{a\}))^{-1}$ we
encounter a breach of analyticity of $\omega_t$ at the point $ta$.

This allows us to conclude that $t\mapsto[\mu^{\boxplus t}]$ has two
possible regimes of evolution, either linear or according to 
$H_t(\max(H_t')^{-1}(\{0\}))$, and the two regimes ``glue''
continuously. This guarantees continuity of
$t\mapsto[\mu^{\boxplus t}]$ on $(0,+\infty)$. 
If the linear evolution occurs at all, then continuity
at $t=1$ is obvious. If it does not, then we observe that
$\lim_{t\to1}\max(H_t')^{-1}(\{0\})=[\rho]=[\mu]$,
and moreover we can specify, by the Nevanlinna representation, that
$$
0<\max(H_t')^{-1}(\{0\})-[\mu]<\sqrt{(t-1)\rho(\mathbb R)}=
\sqrt{(t-1)\left(\int s^2\,d\mu(s)-\left[\int s\,d\mu(s)\right]^2
\right)}.
$$
Then 
$$[\mu^{\boxplus t}]=
H_t(\max(H_t')^{-1}(\{0\}))=\max(H_t')^{-1}(\{0\})+
(t-1)\int s-\frac{1}{s-\max(H_t')^{-1}(\{0\})}\,
d\rho(s),
$$
so it is enough to estimate 
$\left|
(t-1)\int\frac{1}{s-\max(H_t')^{-1}(\{0\})}\,
d\rho(s)\right|.$ Recalling the equation determining 
$\max(H_t')^{-1}(\{0\})$, namely
$\int\frac{1}{(s-\max(H_t')^{-1}(\{0\}))^2}\,
d\rho(s)=\frac{1}{t-1}$, and noting that $\left(\int\frac{1}{s-\max(H_t')^{-1}(\{0\})}\,
d\rho(s)\right)^2<\rho(\mathbb R)\int\frac{1}{(s-\max(H_t')^{-1}(\{0\}))^2}\,
d\rho(s)$, we get
\begin{eqnarray*}
\left(
(t-1)\int\frac{1}{s-\max(H_t')^{-1}(\{0\})}\,
d\rho(s)\right)^2 & < & (t-1)^2
\int\frac{\rho(\mathbb R)}{(s-\max(H_t')^{-1}(\{0\}))^2}\,
d\rho(s)\\
&= & (t-1)\rho(\mathbb R).
\end{eqnarray*}
We obtain 
$$
0<[\mu^{\boxplus t}]-[\mu]<\sqrt{(t-1)\rho(\mathbb R)}+
(t-1)\rho(\mathbb R).
$$
This, together with the fact that the variance of $\mu^{\boxplus t}$
equals $t$ times the variance of $\mu$, guarantees that 
$$
0<[\mu^{\boxplus t+\varepsilon}]-[\mu^{\boxplus t}]<
t(\sqrt{\varepsilon\rho(\mathbb R)}+
\varepsilon\rho(\mathbb R)).
$$
Since $\rho(\mathbb R)=\textrm{VAR}(\mu)$, this concludes our proof.
\end{proof}

Note that, while the estimate provided by the above 
lemma is indeed optimal at $t=1$, it is not optimal
throughout $(1,+\infty)$. However, it will serve our purposes.
Also, it is worth mentioning that the correspondence $t\mapsto
[\mu^{\boxplus t}]$ may fail to be analytic on $(1,+\infty)$ only
due to a ``phase transition'' from a linear to an essentially
inverse quadratic regime.

Next we address the problem of continuity for the upper bound of the 
support of the multiplicative free convolution of two probability 
distributions on the positive half-line. More precisely, assume that
there is a topological space $X$ and a pair of functions 
$f,g\colon X\to (\mathcal A^+,\varphi)$, where $\mathcal A^+$ denotes
the set of positive elements in the non-commutative probability
space $(\mathcal A,\varphi)$. Assume that $f,g$ are weak* continuous
(meaning that $X\ni\xi\mapsto\mu_{f(\xi)}$ is continuous from the
topology of $X$ to the weak topology on the space of probability
distributions compactly supported on $[0,+\infty)$, and the same
for $g$), and in addition the maps $X\ni\xi\mapsto\|f(\xi)\|,
X\ni\xi\mapsto\|g(\xi)\|$ are continuous. As noted before,
$\|f(\xi)\|=[\mu_{f(\xi)}],$ and we shall use the two notations
interchangeably.

It was noted before that $[\mu_{f(\xi)}]$ coincides with 
$\max\{x\in\mathbb R\colon G_{\mu_{f(\xi)}}\textrm{ not analytic in
} x\}$. Equation 
\eqref{eta-F&psi-G} allows us to re-phrase
this in terms of the moment generating function as 
$$
\frac1{[\mu_{f(\xi)}]}=
\min\{x\in\mathbb R\colon \psi_{\mu_{f(\xi)}}\textrm{ not analytic in
} x\}.
$$
Let us recall that for any $\mu\neq\delta_0$ supported on the positive
half-line, $\psi_\mu$ is strictly increasing on the interval
$(-\infty,1/[\mu])$, so $\lim_{x\uparrow1/[\mu]}\psi_\mu(x)$ exists
in $(0,+\infty]$. We shall denote it by $\psi_\mu(1/[\mu])$. In 
particular, the inverse function $\psi^{-1}_\mu$ of $\psi_\mu$
is defined on $(\mu(\{0\})-1,\psi_\mu(1/[\mu])]$, monotonic,
and takes values in $\left(-\infty,1/[\mu]\right]$.
However, it is clear that $\psi_\mu^{-1}$ might have an analytic
extension beyond $\psi_\mu(1/[\mu])$; indeed, that would correspond
to the case when $(\psi_\mu^{-1})'(\psi_\mu(1/[\mu]))=0$.
Thus, we can give a description of $[\mu]$ in terms of
$\psi_\mu^{-1}$:
$$
\frac1{[\mu]}=\min\left\{\psi_\mu^{-1}(x)\colon(\psi_\mu^{-1})'(r)>0
\forall r<x,\psi_\mu^{-1}\textrm{ not analytic in }x \textrm{ or }(\psi_\mu^{-1})'(x)=0
 \right\}.
$$
(The case $x=+\infty$ is not excluded.)

The following proposition is concerned with the continuity of the
correspondence
$X\ni\xi\mapsto[\mu_{f(\xi)}\boxtimes\mu_{g(\xi)}]$ or, equivalently,
the correspondence
$X\ni\xi\mapsto\|f(\xi)g(\xi)\|$, where the sets $f(X)$ and $g(X)$ are
assumed to be free with respect to $\varphi$. For mere convenience,
we assume $X$ to be a metric space. We shall denote by
$M_{f(\xi)}$ the largest positive number with the property that 
$\psi_{\mu_{f(\xi)}}^{-1}$ extends analytically to a complex 
neighbourhood of the interval $(\mu_{f(\xi)}(\{0\})-1,M_{f(\xi)}).$
We will assume that $\psi_{\mu_{f(\xi)}}^{-1}$ extends continuously
as a real function to $M_{f(\xi)}$ and we will denote by
$\tilde{\psi}_{\mu_{f(\xi)}}^{-1}$ the continuous extension
$$\tilde{\psi}_{\mu_{f(\xi)}}^{-1}(x)=\left\{\begin{array}{lcl}
{\psi}_{\mu_{f(\xi)}}^{-1}(x) & \textrm{if} & x<M_{f(\xi)}\\
\displaystyle\lim_{r\uparrow M_{f(\xi)}}{\psi}_{\mu_{f(\xi)}}^{-1}(r)
& \textrm{if} & x\ge M_{f(\xi)}
\end{array}\right.
$$

\begin{proposition}\label{prop:boxtimes-cont}
Let $X$ be a metric space, $(\mathcal A,\varphi)$ a 
non-commutative probability space and 
$f,g\colon X\to (\mathcal A^+\setminus\{0\},\varphi)$ two norm-bounded 
functions that take values in free subalgebras of $\mathcal A$ 
satisfying the following conditions:
\begin{enumerate}
\item The correspondences $X\ni\xi\mapsto\mu_{f(\xi)},\mu_{g(\xi)}$
are weakly continuous;
\item The correspondences $X\ni\xi\mapsto\|f(\xi)\|,\|g(\xi)\|\in(0,+
\infty)$ are continuous;
\item The correspondences $X\ni\xi\mapsto M_{f(\xi)},M_{g(\xi)}
\in(0,+\infty]$ are continuous;
\item The correspondences $X\ni\xi\mapsto
\tilde{\psi}_{\mu_{f(\xi)}}^{-1},\tilde{\psi}_{\mu_{g(\xi)}}^{-1}$ 
are continuous in the uniform norm, in the sense that for any $\xi_0
\in X$,
$$
\lim_{\xi\to\xi_0}\sup_{r\in[0,+\infty]}|\tilde{\psi}_{\mu_{f(\xi)}}^{-1}(r)
-\tilde{\psi}_{\mu_{f(\xi_0)}}^{-1}(r)|=0.
$$
\item If $\psi_{\mu_{f(\xi_0)}}^{-1}$ is analytic 
on some complex neighborhood of $[0,r]$, then there exists a 
neighborhood $\mathcal U$ of $\xi_0$ and a complex
neighborhood $V$ of $[0,r]$ so that $\psi_{\mu_{f(\xi)}}^{-1}$
is analytic on $V$ for all $\xi\in\mathcal U$. Same statement is 
required to hold for $g$.
\end{enumerate}
Then the correspondence $X\ni\xi\mapsto[\mu_{f(\xi)}\boxtimes\mu_{g(\xi)}]\in(0,+\infty)$ is continuous.
\end{proposition}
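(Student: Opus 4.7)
My plan is to leverage the multiplicativity of the $S$-transform to express $\psi^{-1}_{\mu_{f(\xi)}\boxtimes\mu_{g(\xi)}}$ directly in terms of the factor inverses, and then track the critical point that determines $[\mu_{f(\xi)}\boxtimes\mu_{g(\xi)}]$ under the given continuity hypotheses. Combining \eqref{S} with the defining relation $S_\mu(z)=\frac{1+z}{z}\psi_\mu^{-1}(z)$ yields the identity
\[
\psi^{-1}_{\mu_{f(\xi)}\boxtimes\mu_{g(\xi)}}(z)=\frac{1+z}{z}\,\psi^{-1}_{\mu_{f(\xi)}}(z)\,\psi^{-1}_{\mu_{g(\xi)}}(z),
\]
valid in a neighbourhood of $0$ (the pole at $0$ is cancelled by the double zero coming from $\psi^{-1}_\mu(0)=0$) and, via hypothesis (5), on every complex neighbourhood of any real interval $[0,r]$ on which both factor inverses extend analytically. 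Consequently $M_{\mu_{f(\xi)}\boxtimes\mu_{g(\xi)}}\ge\min(M_{\mu_{f(\xi)}},M_{\mu_{g(\xi)}})$.

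I will next invoke the characterization recalled before the proposition,
\[
\frac{1}{[\mu_{f(\xi)}\boxtimes\mu_{g(\xi)}]}=\psi^{-1}_{\mu_{f(\xi)}\boxtimes\mu_{g(\xi)}}(x^*_\xi),
\]
where $x^*_\xi$ is the smallest positive real at which $\psi^{-1}_{\mu_{f(\xi)}\boxtimes\mu_{g(\xi)}}$ either loses analyticity or has vanishing derivative. Fixing $\xi_0\in X$, hypothesis (4) applied to both factors, plus the fact that $\frac{1+z}{z}\psi^{-1}_f(z)\psi^{-1}_g(z)$ extends continuously through $0$, yields uniform convergence of $\tilde\psi^{-1}_{\mu_{f(\xi)}\boxtimes\mu_{g(\xi)}}$ as $\xi\to\xi_0$. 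Combined with hypothesis (5) and Cauchy's integral formula on small discs lying inside the common analyticity domain, this upgrades to uniform convergence of $\bigl(\psi^{-1}_{\mu_{f(\xi)}\boxtimes\mu_{g(\xi)}}\bigr)'$ on compact subsets of that domain.

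I then split into two cases for $x^*_{\xi_0}$. If $x^*_{\xi_0}$ lies strictly inside the analyticity domain, it is an isolated zero of the derivative, and Rouch\'e's theorem applied to the uniformly convergent derivatives produces a nearby zero $x^*_\xi$. If instead $x^*_{\xi_0}$ equals the boundary value $M_{\mu_{f(\xi_0)}\boxtimes\mu_{g(\xi_0)}}$, continuity of $x^*_\xi$ follows from hypothesis (3), since loss of analyticity of either factor forces loss of analyticity of the product.

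The hard part is the borderline regime, analogous to the phase transition identified in Proposition \ref{prop:t-norm-cont}: at a parameter $\xi$ where $x^*_\xi$ migrates from an interior zero of the derivative to the boundary of the analyticity domain, I must argue that the two candidate values of $x^*_\xi$ agree in the limit. The resolution should be that at such a transition the derivative $\bigl(\psi^{-1}_{\mu_{f(\xi)}\boxtimes\mu_{g(\xi)}}\bigr)'$ necessarily vanishes at the boundary point itself, so the two regimes glue continuously; this is the analogue of the ``gluing'' step carried out for $\boxplus$ in Proposition \ref{prop:t-norm-cont}. Plugging the thus continuous function $\xi\mapsto x^*_\xi$ into the uniformly continuous family $\tilde\psi^{-1}_{\mu_{f(\xi)}\boxtimes\mu_{g(\xi)}}$ then yields the desired continuity of $\xi\mapsto[\mu_{f(\xi)}\boxtimes\mu_{g(\xi)}]$.
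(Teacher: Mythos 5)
Your high-level plan is the same as the paper's: use the $S$-transform identity $\psi^{-1}_{\mu_f\boxtimes\mu_g}(z)=\frac{1+z}{z}\psi^{-1}_{\mu_f}(z)\psi^{-1}_{\mu_g}(z)$ and split according to whether $1/[\mu_{f(\xi_0)}\boxtimes\mu_{g(\xi_0)}]$ is realized at an interior critical point of $c_0:=\psi^{-1}_{\mu_{f(\xi_0)}\boxtimes\mu_{g(\xi_0)}}$ or at the boundary of its domain. But in the interior-critical-point case (the paper's Case 1) your argument has a genuine gap, and it is exactly the hard part of the proof.

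To run Rouch\'e on the derivatives of $c_\xi$ near $x^*_{\xi_0}$, you need $\psi^{-1}_{\mu_{f(\xi)}}$ and $\psi^{-1}_{\mu_{g(\xi)}}$ to be analytic on a fixed complex neighborhood of $[0,x^*_{\xi_0}]$ for $\xi$ near $\xi_0$, and the only hypothesis that delivers this is (5) --- which can be invoked only after you know that $\psi^{-1}_{\mu_{f(\xi_0)}}$ and $\psi^{-1}_{\mu_{g(\xi_0)}}$ themselves extend analytically to a complex neighborhood of the \emph{closed} interval $[0,x^*_{\xi_0}]$. You tacitly assume this when you speak of ``the common analyticity domain,'' but it is far from automatic: a priori $c_0$ extends past $x^*_{\xi_0}$ (the critical point is interior) while one of the factor inverses could in principle lose analyticity at or before $x^*_{\xi_0}$ even though the product $\frac{1+z}{z}a_0 b_0$ does not. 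Establishing this extension is essentially the whole content of the paper's Case 1: it uses Biane's subordination functions $\omega_1=a_0\circ\psi_{\mu_f\boxtimes\mu_g},\ \omega_2=b_0\circ\psi_{\mu_f\boxtimes\mu_g}$, their continuous boundary extension and argument-increasing property, a Schwarz-reflection step to get real-analyticity on $[0,x^*_{\xi_0})$, and then a Riemann-surface continuation argument (together with a perturbation trick translating $\mu_{f(\xi_0)}$ by small reals $k_m$ to avoid shared critical values of $\psi_{\mu_f}$ and $\psi_{\mu_g}$) to push the extension through $x^*_{\xi_0}$. None of this appears in your proposal, and without it hypothesis (5) cannot be applied where you need it.

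Two smaller points. First, you do not invoke the Bercovici--Voiculescu weak-continuity lower bound $\liminf_n[\mu_{f(\xi_n)}\boxtimes\mu_{g(\xi_n)}]\ge[\mu_{f(\xi_0)}\boxtimes\mu_{g(\xi_0)}]$; the paper uses it at the outset, and it is what lets one conclude from ``some critical point $d_n\to x^*_{\xi_0}$'' rather than having to identify $d_n$ with the smallest critical point. Second, your resolution of the ``borderline regime'' --- that the derivative of $\psi^{-1}_{\mu_f\boxtimes\mu_g}$ necessarily vanishes at the boundary point at a transition --- is not what the paper does and is not clearly true; the paper's Case 2 instead handles critical points $\zeta_n\to M_{\xi_0}$ directly via hypothesis (4) and the formula $c_n(\zeta_n)=\frac{1+\zeta_n}{\zeta_n}a_n(\zeta_n)b_n(\zeta_n)$, with no claim about a vanishing derivative at the boundary.
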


Before starting the proof, we should mention that, as weak 
continuity for $f$ (condition (1)) is equivalent to continuity 
in the topology of the uniform convergence on compacts for $\psi_{
\mu_{f(\xi)}}$, if $\{\xi_n\}_{n\in\mathbb N}\subseteq X$ converges to 
$\xi_0$ and $\psi_{\mu_{f(\xi_n)}}^{-1},\psi_{\mu_{f(\xi_0)}}^{-1}$
have a common domain, then $\psi_{\mu_{f(\xi_n)}}^{-1}$ converges
to $\psi_{\mu_{f(\xi_0)}}^{-1}$ uniformly on compacts of the common
domain. Condition (5) is devised in order to efficiently 
exploit this property. Condition (4) is a bit stronger than it appears:
it says that if $\xi_n$ converges to $\xi$ in $X$ and $r_n\in[0,
M_{f(\xi_n)}]$ converges to $r\in[0,M_{f(\xi)}]$, then 
$\psi_{\mu_{f(\xi_n)}}^{-1}(r_n)$ converges to 
$\psi_{\mu_{f(\xi)}}^{-1}(r)$ as $n\to\infty$. Indeed, 
$|\psi_{\mu_{f(\xi)}}^{-1}(r)-\tilde{\psi}_{\mu_{f(\xi)}}^{-1}(r_n)|
\to0$ as $n\to\infty$ by the continuity of 
$\tilde{\psi}_{\mu_{f(\xi)}}^{-1}$, and 
$|\tilde{\psi}_{\mu_{f(\xi)}}^{-1}(r_n)-
{\psi}_{\mu_{f(\xi_n)}}^{-1}(r_n)|
\to0$ as $n\to\infty$ by condition (4). In addition, our convention
for doing arithmetics with infinity are $\infty+$
a real number $=\infty$, and $\infty -\infty=0$, so that 
the sets $\{x:\tilde{\psi}_{\mu_{f(\xi)}}^{-1}(x)=+\infty\}$
and $\{x:\tilde{\psi}_{\mu_{f(\xi_n)}}^{-1}(x)=+\infty\}$ must 
coincide when $n$ is large enough.

\begin{proof}
The statement of the proposition is local in nature: thus, let us choose 
$\xi_0\in X$ and an arbitrary sequence $\{\xi_n\}_{n\in\mathbb N}
\subseteq X$ converging to $\xi_0$.
It should be 
recorded that condition (1) and the weak continuity result of 
Bercovici and Voiculescu \cite{BVIUMJ} for free multiplicative 
convolution implies that 
$\liminf_{n\to\infty}[\mu_{f(\xi_n)}\boxtimes\mu_{g(\xi_n)}]\ge[\mu_{f(\xi_0)}\boxtimes\mu_{g(\xi_0)}]$ (we might ``lose,'' but not ``gain''
support when passing to weak limit).
We shall prove that $\lim_{n\to\infty}[\mu_{f(\xi_n)}\boxtimes\mu_{g(\xi_n)}]=[\mu_{f(\xi_0)}\boxtimes\mu_{g(\xi_0)}].$ In order to do that, we shall
use \eqref{S}, the $S$-transform property of Voiculescu. This
translates, in terms of the moment-generating function, in
$$
\psi_{\mu_{f(\xi)}\boxtimes\mu_{g(\xi)}}^{-1}(z)=\frac{1+z}{z}
\psi_{\mu_{f(\xi)}}^{-1}(z)\psi_{\mu_{g(\xi)}}^{-1}(z).
$$
This relation holds for $z$ in the interval bounded below by
$\max\left\{\mu_{f(\xi)}(\{0\}),\mu_{g(\xi)}(\{0\})\right\}-1$ and above
by the minimum between the domains of $\psi_{\mu_{f(\xi)}}^{-1}$ 
and $\psi_{\mu_{g(\xi)}}^{-1}$ viewed as inverses of the corresponding functions. However, there are many circumstances in which the above
equality can be continued analytically (as complex functions) further 
along the positive axis.
The maximum domain in $\mathbb R$ is the interval
$(\max\left\{\mu_{f(\xi)}(\{0\}),\mu_{g(\xi)}(\{0\})\right\}-1,M_\xi]$,
where $M_\xi$ is no smaller than the least of the upper bounds
$M_{f(\xi)},M_{g(\xi)}$ of the 
domains of $\psi_{\mu_{f(\xi)}}^{-1}$ and $\psi_{\mu_{g(\xi)}}^{-1}$.
In that case, $1/[\mu_{f(\xi)}\boxtimes\mu_{g(\xi)}]$ equals the 
either $\psi_{\mu_{f(\xi)}\boxtimes\mu_{g(\xi)}}^{-1}(M_\xi)$
or $\psi_{\mu_{f(\xi)}\boxtimes\mu_{g(\xi)}}^{-1}(x_\xi)$, where
$x_\xi$ is the smallest critical point of 
$\psi_{\mu_{f(\xi)}\boxtimes\mu_{g(\xi)}}^{-1}$, if existing.

For simplicity, we shall denote  $a_n=\psi_{\mu_{f(\xi_n)}}^{-1},
b_n=\psi_{\mu_{g(\xi_n)}}^{-1},c_n=
\psi_{\mu_{f(\xi_n)}\boxtimes\mu_{g(\xi_n)}}^{-1}$, with the obvious
changes when $n$ is replaced by $0$ or simply eliminated. 
We shall split the proof in two cases:
\newline\noindent {\bf Case 1:} There exists a point $x_{\xi_0}>0$ in
the domain of $c_0$ so that $c_0'(x_{\xi_0})=0$ as a complex function. Without loss
of generality, we may assume that this point $x_{\xi_0}$ is the
smallest satisfying this condition, so that $c_0(x_{\xi_0})=
1/[\mu_{f(\xi_0)}\boxtimes\mu_{g(\xi_0)}]$ (and thus $c_0$ extends 
analytically on a complex neighborhood of $[0,x_{\xi_0}]$).

Thus, by the $S$-transform property, there is a 
neighborhood of $[0,x_{\xi_0})$ on which $a_0$ and $b_0$ extend
analytically. Indeed, assume towards contradiction
there exists a point $r\in(0,
x_{\xi_0})$ so that, say, $a_0$ does not extend analytically to
it. 

Our hypothesis for Case 1
guarantees that $\psi_{\mu_{f(\xi_0)}\boxtimes\mu_{g(\xi_0)}}(
[0,1/[\mu_{f(\xi_0)}\boxtimes\mu_{g(\xi_0)}]])=
[0,x_{\xi_0}]$ (bijective correspondence)
and the only obstacle to the {\em analytic} extension of
$\psi_{\mu_{f(\xi_0)}\boxtimes\mu_{g(\xi_0)}}$ to 
$1/[\mu_{f(\xi_0)}\boxtimes\mu_{g(\xi_0)}]$ is the zero
derivative of $c_0$ in $x_{\xi_0}$. If we replace in the 
moment-generating function version of the $S$-transform equation (given
above) the variable $z$ by $\psi_{\mu_{f(\xi_0)}\boxtimes\mu_{g(\xi_0)}}(z)$ we obtain
$$
z\frac{\psi_{\mu_{f(\xi_0)}\boxtimes\mu_{g(\xi_0)}}(z)}{\psi_{\mu_{f(\xi_0)}\boxtimes\mu_{g(\xi_0)}}(z)+1}=
a_0(\psi_{\mu_{f(\xi_0)}\boxtimes\mu_{g(\xi_0)}}(z))
b_0(\psi_{\mu_{f(\xi_0)}\boxtimes\mu_{g(\xi_0)}}(z)).
$$
Denote for convenience 
$\omega_1=a_0\circ\psi_{\mu_{f(\xi_0)}\boxtimes\mu_{g(\xi_0)}},
\omega_2=b_0\circ\psi_{\mu_{f(\xi_0)}\boxtimes\mu_{g(\xi_0)}}.$
It has been shown in \cite{Biane} that $\omega_j$ extend analytically
to $\mathbb C\setminus[0,+\infty)$, preserve $\mathbb C^+$ and
increase the argument of the variable ($\arg \omega_j(z)\ge
\arg z,$ $z\in\mathbb C^+)$,
and in \cite{aihp} that their
restriction to the upper half-plane extends continuously to 
$\mathbb R$. In particular, $a_0,b_0$ extend continuously to 
$[0,x_{\xi_0}]$. Moreover, since 
$\psi_{\mu_{f(\xi_0)}\boxtimes\mu_{g(\xi_0)}}$ is real on $[0,
1/[{\mu_{f(\xi_0)}\boxtimes\mu_{g(\xi_0)}}]],$ $\omega_1,\omega_2$ must 
also be real on this same interval (see also \cite{aihp}), and thus
$a_0,b_0$ are continuous real functions on $[0,x_{\xi_0}]$. 
A direct application of the Schwarz reflection principle guarantees 
that $a_0,b_0$ extend analytically to a neighborhood of $[0,x_{\xi_0})
$ in $\mathbb C$, as claimed. It should be noted in addition that, as 
$\omega_j$ preserve half-planes (see \cite{Biane}),  
both $\omega_j$ are analytic on 
$[0,1/[\mu_{f(\xi_0)}\boxtimes\mu_{g(\xi_0)}])$, and so 
$a_0'(r),b_0'(r)>0$ for all $r\in[0,x_{\xi_0})$.

The analytic extension of $c_0$ around $x_{\xi_0}$ together with 
the fact that $c'_0(x_{\xi_0})=0$ guarantees that there exists
an $n>0$ so that
$z\mapsto(\psi_{\mu_{f(\xi_0)}\boxtimes\mu_{g(\xi_0)}}(z)
-\psi_{\mu_{f(\xi_0)}\boxtimes\mu_{g(\xi_0)}}(
1/[\mu_{f(\xi_0)}\boxtimes\mu_{g(\xi_0)}]))^n$ is analytic in a 
neighborhood of $1/[\mu_{f(\xi_0)}\boxtimes\mu_{g(\xi_0)}]$.
We shall denote $\mathcal S$ the Riemann surface 
determined by the corresponding $n^{\rm th}$ root. We shall
argue that, with the above notations, $\omega_1$ and $\omega_2$
extend analytically to a piece of $\mathcal S$
which projects onto a neighborhood of 
$1/[\mu_{f(\xi_0)}\boxtimes\mu_{g(\xi_0)}]$ (of course, excluding
$1/[\mu_{f(\xi_0)}\boxtimes\mu_{g(\xi_0)}]$).
We shall do this at first under the additional assumption that 
$\psi_{\mu_{f(\xi_0)}}$ and $\psi_{\mu_{g(\xi_0)}}$
do not share any critical {\em values}. 
Indeed, let us follow $\psi_{\mu_{f(\xi_0)}\boxtimes\mu_{g(\xi_0)}}
=\psi_{\mu_{f(\xi_0)}}\circ\omega_1$ along an arbitrary
path $p$ in $\mathcal S$ starting in the upper half-plane
close enough to $1/[\mu_{f(\xi_0)}\boxtimes\mu_{g(\xi_0)}]$.
Since $\arg\psi_{\mu_{f(\xi_0)}}(w)\ge\arg w$ for $w$ in the 
upper half-plane, $\psi_{\mu_{f(\xi_0)}\boxtimes\mu_{g(\xi_0)}}(z)$
will stay in the upper half-plane as long as $\omega_1(z)$ does.
Thus, we can then write $\omega_1(z)=\psi_{\mu_{f(\xi_0)}}^{-1}
(\psi_{\mu_{f(\xi_0)}\boxtimes\mu_{g(\xi_0)}}(z))$ whenever
$\omega_1(z)$ is still in $\mathbb C^+$ as $z$ runs through $p$. 
The only obstacle to the analytic extension of $\omega_1$ through 
a $z_k$ is a zero derivative of $\psi_{\mu_{f(\xi_0)}}$
in the point $\omega_1(z_k)\in\mathbb C^+$. 
Then $\psi_{\mu_{f(\xi_0)}}'(\omega_1(z_k))\omega_1'(z_k)=
\psi_{\mu_{f(\xi_0)}\boxtimes\mu_{g(\xi_0)}}'(z_k)$.
As without loss of generality 
$\psi_{\mu_{f(\xi_0)}\boxtimes\mu_{g(\xi_0)}}'(z_k)\neq 0$,
it follows that $\omega'_1$ has infinite limit in $z_k.$
Since also $\psi_{\mu_{g(\xi_0)}}'(\omega_2(z_k))\omega_2'(z_k)=
\psi_{\mu_{f(\xi_0)}\boxtimes\mu_{g(\xi_0)}}'(z_k)$, it 
follows from the $S$-transform 
equation and analytic continuation that necessarily 
$\omega_2'(z_k)$ is infinite, and moreover, the zeros of 
$\psi_{\mu_{f(\xi_0)}}'$ and $\psi_{\mu_{g(\xi_0)}}'$ in 
$\omega_1(z_k)$ and $\omega_2(z_k)$,
respectively, must be of the same order.
But since $\psi_{\mu_{f(\xi_0)}\boxtimes\mu_{g(\xi_0)}}
=\psi_{\mu_{f(\xi_0)}}\circ\omega_1
=\psi_{\mu_{g(\xi_0)}}\circ\omega_2$, we conclude that
$\psi_{\mu_{f(\xi_0)}}$ and $\psi_{\mu_{g(\xi_0)}}$
share the critical value 
$\psi_{\mu_{f(\xi_0)}\boxtimes\mu_{g(\xi_0)}}(z_k),$
contradicting our hypothesis. (For the origins of
this idea, see \cite{V-fe}.)

Thus, under the additional hypothesis regarding critical values,
we have shown that $\omega_1$ and $\omega_2$ extend to a 
simply connected domain $D$ of $\mathcal S$ which they map onto
$V\setminus[\omega_1(1/[\mu_{f(\xi_0)}\boxtimes\mu_{g(\xi_0)}]),
+\infty)$ and 
$V'\setminus[\omega_2(1/[\mu_{f(\xi_0)}\boxtimes\mu_{g(\xi_0)}]),
+\infty)$, respectively. Here $V$ and $V'$ are complex neighborhoods
of $\omega_1(1/[\mu_{f(\xi_0)}\boxtimes\mu_{g(\xi_0)}])$ and
$\omega_2(1/[\mu_{f(\xi_0)}\boxtimes\mu_{g(\xi_0)}])$, respectively.
Moreover, these extensions still satisfy 
the relations $\psi_{\mu_{f(\xi_0)}\boxtimes\mu_{g(\xi_0)}}
=\psi_{\mu_{f(\xi_0)}}\circ\omega_1
=\psi_{\mu_{g(\xi_0)}}\circ\omega_2$.
Since $\psi_{\mu_{f(\xi_0)}\boxtimes\mu_{g(\xi_0)}}$ extends
analytically to all of $\overline{D}$, we
use the fact that the moment generating functions
increase arguments to conclude that $\psi_{\mu_{f(\xi_0)}}$
and $\psi_{\mu_{g(\xi_0)}}$ must extend analytically to some interval
$(a_0(x_{\xi_0}),a_0(x_{\xi_0})+\varepsilon)$ and
$(b_0(x_{\xi_0}),b_0(x_{\xi_0})+\varepsilon)$, respectively,
and thus $a_0$ and $b_0$ must themselves extend 
analytically (and bijectively!) to some complex neighborhood
of $[0,x_{\xi_0}]$.

We have proved our claim under 
the additional assumption that 
$\psi_{\mu_{f(\xi_0)}}$ and $\psi_{\mu_{g(\xi_0)}}$
do not share any critical { values}. To complete the proof
of our claim that $a_0,b_0$ extend to some 
complex neighborhood of $[0,x_{\xi_0}]$ regardless of this 
condition being fulfilled, we only need to observe that
a translation of a measure $\mu$ by a real number $k$  
from the point of view of $w\mapsto\psi_\mu(w)=w^{-1}
G_\mu(w^{-1})-1$ into 
$w\mapsto w^{-1}G_\mu(w^{-1}-k)-1$. 
Then $[w^{-1}G_\mu(w^{-1}-k)-1]'=-w^{-2}G_\mu(w^{-1}-k)-
w^{-3}G_\mu'(w^{-1}-k)$. Thus, critical values change 
continuously in $k$. Since there can be at most countable 
critical values, we conclude that there exists a sequence
$k_m$ tending to zero so that the moment generating
functions of the translates of ${\mu_{f(\xi_0)}}$ by
$k_m$ and $\psi_{\mu_{g(\xi_0)}}$ have no common critical
values. Passing to the limit as $k_m\to0$ provides the required
answer.

But now the result under the assumption of Case 1 is proved;
by part (5) of our Proposition there exists a neighborhood $V$
of $[0,x_{\xi_0}]$ on which $a_n$ and $b_n$ extend analytically,
and by part (1) they converge to $a_0$ and $b_0$, respectively. 
By the $S$-transform property, $c_n\to c_0$ on $V$ as $n\to\infty$,
so there are points $d_n\in(0,+\infty)$ so that 
$c_n'(d_n)=0$ and $\lim_{n\to\infty}d_n=xญญญญญญญ_{\xi_0}$.
So $c_n(x_{\xi_n})=\lim_{k\to\infty}1/[\mu_{f(\xi_{n_k})}\boxtimes
\mu_{g(\xi_{n_k})}]=1/[\mu_{f(\xi_0)}\boxtimes\mu_{g(\xi_0)}]
=c_0(x_{\xi_0})$, as claimed.

\noindent{\bf Case 2:} For any $x>0$ in the domain of $c_0$,
we have $c_0'(x)>0$. If we denote as before $M_{\xi_0}$ to be the
upper bound of the domain of $c_0$, then 
$c_0(M_{\xi_0}):=\lim_{x\uparrow M_{\xi_0}}c_0(x)$ exists,
belongs to $(0,+\infty)$ (although $M_{\xi_0}$ might be equal to
$+\infty$) and equals $1/[\mu_{f(\xi_0)}\boxtimes\mu_{g(\xi_0)}]$.
By the $S$-transform equation, it follows that at least one
of $a_0$, $b_0$ must have $M_{\xi_0}$ as upper bound of the
domain of analyticity. Without loss of generality, assume that
$M_{\xi_0}=M_{f(\xi_0)}$ is the upper bound for the domain of 
analyticity of $a_0$. Condition (3) implies that 
$M_{f(\xi_n)}\to M_{f(\xi_0)}$ as $n\to\infty$. 
As the same condition holds for $g$, we easily
conclude that $\lim_{n\to\infty}\min\{M_{f(\xi_n)},M_{g(\xi_n)}\}
=M_{\xi_0}$ (limits considered in $[0,+\infty]$). 
If there is an $n_0\in
\mathbb N$ so that $c_n$ has no critical point in $(0,
\min\{M_{f(\xi_n)},M_{g(\xi_n)}\})$ for any $n\ge n_0$, then
condition (4) and the $S$-transform property allow us to conclude. 
Assume that for infinitely many $n$ the function $c_n$ has a 
critical point in $(0,\min\{M_{f(\xi_n)},M_{g(\xi_n)}\})$; call
the smallest of them $\zeta_n$. Then we know that $c_n(\zeta_n)=
1/[\mu_{f(\xi_n)}\boxtimes\mu_{g(\xi_n)}]$. Since $a_0,b_0$ are 
analytic on some complex neighborhood of $[0,M_{\xi_0})$, condition 
(5) tells us that for any $s\in[0,M_{\xi_0})$ there exists a 
neighborhood $V_s$ of $[0,s]$ in $\mathbb C$ so that, from a certain 
$n$ on, all $a_n,b_n$ have an analytic extension to 
$V_s$. If there is a subsequence $\{\zeta_{n_k}\}_k$ which converges 
to a point $r<M_{\xi_0}$, then $c_{n_k}$ converges to $c_0$
uniformly on compacts of $V_r$ by condition (1) and thus 
$r$ is a critical point of $c_0$, contradicting the assumption
of Case 2. 
The case when $\zeta_n$ converges to $M_{\xi_0}$ as $n\to\infty$
is covered by condition (4): indeed, this condition implies that
$c_n(\zeta_n)=\frac{1+\zeta_n}{\zeta_n}a_n(\zeta_n)b_n(\zeta_n)\to
\frac{1+M_{\xi_0}}{M_{\xi_0}}a_0(M_{\xi_0})b_0(M_{\xi_0})=c_0(M_{\xi_0})$ as $n\to\infty$. (Here we use the obvious convention
$\frac{1+\infty}{\infty}=1.)$ This concludes our proof.
\end{proof}
We would like to emphasize that some of the conditions of the above
proposition can be weakened or replaced with conditions of a different
nature: we use this set of
conditions simply because it covers a conveniently large 
family of distributions for our purposes.

\begin{corollary}\label{cor:boxplus-cont-util}
If $\mu$ is a fixed compactly supported probability measure on $[0,+
\infty)$, $a=(a_1,\dots,a_m)\in[0,+\infty)^m\setminus\{(0,\dots,0)\}$, 
$t=(t_1,\dots,t_{m})\in(0,1)^{m}\cap\Delta_m$ 
(so $(t_1,\dots,t_m)$ satisfy $\sum_{j=1}^mt_j=1$), and 
$\nu(a,t)=\sum_{j=1}^{m}t_j\delta_{a_j}$, then the correspondence
$([0,+\infty)^m\setminus\{(0,\dots,0)\})\times\Delta_m\ni(a,t)\mapsto[\mu\boxtimes\nu(a,t)]$ is continuous.
\end{corollary}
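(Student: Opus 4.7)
The plan is to apply Proposition~\ref{prop:boxtimes-cont} with $X=([0,+\infty)^m\setminus\{0\})\times\Delta_m$ (working on the relative interior $t_j>0$ as in the hypothesis) endowed with the Euclidean metric. We may assume $\mu\neq\delta_0$, for otherwise $\mu\boxtimes\nu=\delta_0$ and the result is trivial. In a sufficiently large tracial probability space we take $f$ to be a constant positive element of distribution $\mu$ living in a subalgebra $\mathcal A_1$, and for each $\xi=(a,t)$ we let $g(\xi)$ be a positive element of distribution $\nu(a,t)$ living in a subalgebra $\mathcal A_2$ free from $\mathcal A_1$. Since the five conditions of the proposition are trivially satisfied for the constant map $f$, the substance of the proof lies in verifying them for $g$.

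Conditions (1) and (2) are immediate. The measure $\nu(a,t)=\sum_j t_j\delta_{a_j}$ is weakly continuous in $(a,t)$ because its atoms and weights are. Thanks to the strict positivity $t_j>0$, we have $\|g(a,t)\|=[\nu(a,t)]=\max_j a_j$, which is continuous on $X$.

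For conditions (3), (4) and (5) I would exploit the fact that
\[
\psi_{\nu(a,t)}(z)=\sum_{j=1}^m\frac{t_j a_j z}{1-a_j z}
\]
is a rational function in $z$ whose coefficients are polynomial in $(a,t)$. Its derivative $\psi_{\nu(a,t)}'(z)=\sum_j t_ja_j/(1-a_jz)^2$ is strictly positive on $(-\infty,1/\max_ja_j)$, so $\psi^{-1}_{\nu(a,t)}$ exists as a real analytic function there and admits a maximal analytic continuation along the positive real axis whose right endpoint $M_{g(a,t)}$ coincides either with $\lim_{z\uparrow 1/[\nu(a,t)]}\psi_{\nu(a,t)}(z)\in(0,+\infty]$ or with the first positive critical value of $\psi_{\nu(a,t)}$. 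Since the critical points of $\psi_{\nu(a,t)}$ are roots of a polynomial whose coefficients depend polynomially on $(a,t)$, they vary continuously (in the Hausdorff sense on multisets), and hence so do the critical values; this gives condition (3). Condition (5) is then the standard fact that analyticity of the local inverse of a rational function on a complex neighborhood of a compact real subinterval of its domain persists under small perturbations of the coefficients. Condition (4) reduces, via the implicit function theorem applied to $\psi_{\nu(a,t)}(w)=x$, to uniform continuity of $\tilde\psi^{-1}_{\nu(a,t)}$ in $(a,t,x)$, which again follows from continuous dependence of the roots of a polynomial on its coefficients.

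The main obstacle will be handling bifurcations: the coalescence of two atoms $a_i=a_j$, and transitions at which the minimal positive critical value of $\psi_{\nu(a,t)}$ switches from one branch to another. These are resolved by the fact that the full multiset of critical values varies continuously in the Hausdorff metric on multisets (so the minimum does as well), combined with the use of the continuous extension $\tilde\psi^{-1}$ in condition (4), which is precisely designed to smooth out such boundary transitions. Once conditions (1)--(5) are verified, Proposition~\ref{prop:boxtimes-cont} directly delivers the desired continuity of $(a,t)\mapsto[\mu\boxtimes\nu(a,t)]$ on $X$.
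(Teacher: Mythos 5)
Your proposal is correct and follows the same route as the paper: apply Proposition~\ref{prop:boxtimes-cont} with $f$ constant equal to $\mu$ and $g(a,t)$ of distribution $\nu(a,t)$, and verify conditions (1)--(5) for $g$ (with (5) for $f$ trivial). The only comment is that the ``main obstacle'' you anticipate is vacuous: since every weight $t_j$ is strictly positive, $\psi_{\nu(a,t)}'(z)=\sum_j t_ja_j/(1-a_jz)^2>0$ on $(-\infty,1/\max_ja_j)$ and $\psi_{\nu(a,t)}(z)\to+\infty$ as $z\uparrow1/\max_ja_j$, so $\psi_{\nu(a,t)}$ maps this half-line bijectively onto $\bigl(\sum_{j\colon a_j=0}t_j-1,+\infty\bigr)$, there are no critical points or values in the relevant range, $M_{g(a,t)}\equiv+\infty$ is trivially continuous, and $\tilde\psi^{-1}_{\nu(a,t)}(+\infty)=1/[\nu(a,t)]=1/\max_ja_j$ depends continuously on $(a,t)$ --- which is precisely the paper's one-line verification of (3) and (4), making the bifurcation and critical-value analysis you sketch unnecessary.
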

\begin{proof}
We shall apply the previous proposition, with the identifications
$X=([0,+\infty)^m\setminus\{(0,\dots,0)\})\times\Delta_m$,
$f$ the constant function taking value $\mu$, and 
$g(a,t)=\nu(a,t)=\sum_{j=1}^{m}t_j\delta_{a_j}$. One checks that $f$  satisfies
all conditions from the proposition above.The weak continuity of $g$ is
equally clear, as is the continuity of the correspondence
$(a,t)\mapsto[\nu(a,t)].$ Observing that $\psi_{\nu(a,t)}$
maps $(-\infty,1/\max\{a_1,\dots,a_m\})$ monotonically 
and bijectively into $\left(\left[\sum_{j\colon a_j=0}t_j\right]-1,
+\infty\right)$ assures us that the upper bound of the domain of 
$\psi_{\nu(a,t)}^{-1}$ is constantly equal to infinity, and hence
continuous, and moreover, $\psi_{\nu(a,t)}^{-1}$ maps plus infinity
into $1/[\nu(a,t)]$, guaranteeing the continuity of 
$\psi_{\nu(a,t)}^{-1}(M_{g(a,t)})$, and hence the
verification of conditions (3) and (4). Condition (5) is 
verified by the constant function $f$. For $g$ one only needs to 
recall the observations following equation \eqref{S}
to note that indeed, given any compact subset of $X$,
there is a complex neighbourhood of $[0,+\infty)$ on which $\psi_{\nu(a,t)}^{-1}$ is analytic for all $(a,t)$ in the given compact set.
Thus, a stronger version of condition (5) is satisfied by $g$.
Applying the above proposition allows us to conclude.
\end{proof}

\section{Almost sure convergence of norms of random matrices}
\label{sec:rmt}

Let GUE be the Gaussian Unitary Ensemble, i.e. the probability measure on $\M_{n}(\C)$ with support on
self-adjoint matrices and density proportional to $\exp (-n/2 \trace (A^{2}))\d{A}$.
The following theorem was obtained in the seminal paper \cite{HT} by Haagerup and Thorbj\o rnsen:

\begin{theorem}\label{thm:HT}
Let $X_n,Y_n$ be two i.i.d GUE random variables on $\M_{n}(\C)$ and $P$ be a non-commutative polynomial in two variables.
Then, almost surely as $n \to \iy$,
$$\norm{P(X_{n},Y_{n})}_\iy \to \norm{P(x,y)}$$
where $x,y$ are free semi-circular elements in a finite von Neumann algebra.
\end{theorem}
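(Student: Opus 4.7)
The plan is to split the conclusion into two inequalities. The easy direction, $\liminf_n \|P(X_n,Y_n)\|_\infty \geq \|P(x,y)\|$, follows from Voiculescu's asymptotic freeness (Theorem \ref{libre} applied to $X_n$, $Y_n$ after conjugating one of them by an independent Haar unitary, or directly from the Gaussian asymptotic freeness of independent GUE). Indeed, for any fixed $k$, almost surely $\phi_n(((P P^*)^k)(X_n,Y_n)) \to \phi((PP^*)^k(x,y))$, and Chebyshev together with standard Gaussian concentration plus Borel--Cantelli upgrades convergence in probability to almost sure convergence. Taking $k\to\infty$ recovers the operator norm of $P(x,y)$ as a liminf of the $n\to\infty$ norms. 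The genuinely delicate inequality is $\limsup_n \|P(X_n,Y_n)\|_\infty \leq \|P(x,y)\|$: spectrum does not automatically avoid escaping outside the limiting support.

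For the hard direction, I would first apply the \emph{linearization trick}: given a self-adjoint polynomial $Q=PP^*$ one constructs a self-adjoint matrix $L = a_0 \otimes \I + a_1 \otimes X + a_2 \otimes Y$ with $a_i \in \M_d(\C)$ self-adjoint such that the operator norms of $Q$ and of $L$ are related by an explicit Schur-complement computation, and in particular $\lambda \notin \spn \sigma(L)$ iff a certain scalar resolvent of $Q$ at a related point is analytic. This reduces the problem to controlling the spectrum of $L_n := a_0\otimes \I_n + a_1 \otimes X_n + a_2 \otimes Y_n$, a matrix whose entries are Gaussian and whose limit is the operator-valued semicircular $\ell = a_0 \otimes 1 + a_1\otimes x + a_2\otimes y$.

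The technical core, and the step I expect to dominate the difficulty, is the \emph{master inequality} of Haagerup--Thorbj\o rnsen: for $z \in \C^+$ and $G_n(z) := (\id_d \otimes \phi_n)[(z\otimes \I_{dn}-L_n)^{-1}]$ and $G(z)$ its free counterpart,
\begin{equation*}
\bigl\|\E[G_n(z)] - G(z)\bigr\|_\infty \leq \frac{C(a_0,a_1,a_2)}{n^2}\,(\Im z)^{-k}
\end{equation*}
for some fixed exponent $k$. The derivation uses repeated Gaussian integration by parts (Stein's identity) on the matrix entries of $X_n,Y_n$ to obtain a Schwinger--Dyson type equation for $\E[G_n]$, then compares it with the matrix Dyson equation satisfied by $G$ via a fixed-point/contraction argument on $\C^+$ where the distance to self-consistency is quantified by the stability of the matrix Dyson equation. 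This yields a sharp $1/n^2$ correction rather than a $1/n$ one, which is essential for what follows.

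Finally, I combine the master inequality with a variance bound $\mathrm{Var}(\trace\, G_n(z)) = O((n\Im z)^{-2 k'})$ coming from the Gaussian Poincar\'e inequality to get that, with high probability, $G_n(z)$ is analytic in a strip approaching the real axis outside the support of the limiting spectral measure $\mu$ of $\ell$. A standard contour argument (pushing $\Im z$ down to the scale $n^{-1/k}$ and using that any eigenvalue of $L_n$ at distance $\delta>0$ from $\mathrm{supp}\,\mu$ would create a pole forcing $\|G_n(z)\|$ to blow up) shows that, for every $\e>0$, eventually no eigenvalue of $L_n$ lies outside an $\e$-neighborhood of $\mathrm{supp}\,\mu$ almost surely, by Borel--Cantelli on a polynomial grid of $z$'s. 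Translating back through the linearization gives $\limsup \|P(X_n,Y_n)\|_\infty \leq \|P(x,y)\|+\e$, and letting $\e\downarrow 0$ concludes.
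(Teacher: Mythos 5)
The paper does not prove this statement: it is quoted verbatim as the main theorem of Haagerup and Thorbj\o rnsen \cite{HT}, and the authors of the present paper simply use it as a black box in the proof of Theorem~\ref{thm:RMT-norm}. Your sketch is a correct and faithful reconstruction of the strategy of \cite{HT} itself: lower bound from Voiculescu's asymptotic freeness of independent GUE matrices plus concentration, then the linearization trick reducing a general polynomial to an operator-valued semicircular $L_n=a_0\otimes\I_n+a_1\otimes X_n+a_2\otimes Y_n$, then the $O(n^{-2})$ master inequality for $\E[G_n(z)]-G(z)$ obtained via Gaussian integration by parts and a stability analysis of the matrix Dyson equation, then a Poincar\'e-inequality variance bound, and finally a contour/Borel--Cantelli argument excluding outlier eigenvalues of $L_n$. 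Each step is the right one and in the right order, and the role of the sharp $n^{-2}$ (rather than $n^{-1}$) rate in making the exclusion of spectrum outside an $\e$-neighbourhood of $\mathrm{supp}\,\mu$ work is correctly identified as the crux. So the proposal is sound; it just proves something the paper cites rather than proves, and it does so by essentially the method of the cited reference.
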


The aim of this section is to build on Theorem \ref{thm:HT}, and extend it to some specific non-commutative monomials of random matrices with prescribed spectra. 

We recall that if $X$ is an $n$-dimensional self-adjoint matrix, its \emph{eigenvalue counting measure} is $n^{-1}\sum_{i=1}^{n}\delta_{\lambda_{i}}$ where
$\lambda_{i}$ are the eigenvalues of $X$.
For any probability measure $\mu$ on the real line, its \emph{distribution function} is defined as $f_{\mu}: t  \mapsto \mu ((-\infty,t])$.

For the purposes of this section, we will say that a sequence of distribution functions $f_{n}$ tends to a distribution function $f$ iff for all $\e >0$,
 there exists an $n_{0}$ such that for all $n\geq n_{0}$,
\begin{equation}
	\forall t\in\R,\quad f(t-\e )-\e \leq f_{n}(t)\leq f(t+\e ) +\e
\end{equation}

\begin{theorem}\label{thm:RMT-norm}
Let $A_{n},B_{n}$ be independent positive self-adjoint random matrices in $\M_{n}(\C)$, such that at least one of $A_{n}$ or $B_{n}$ has a distribution invariant under unitary conjugation. 
Let $f_{n}$ be the distribution function of $A_{n}$
and $g_{n}$ be the distribution function of $B_{n}$. Assume that the (a priori random) distribution  functions $f_{n},g_{n}$ converge almost surely respectively to $f,g$ which are distribution functions of two self-adjoint, bounded and freely independent random variables $x$ and $y$. 
Assume also that the operator norm of $A_n$ (resp. $B_n$) converges to the operator norm of $x$ (resp. $y$).

Then, almost surely as $n \to \infty$,
$$||A_{n}B_{n}|| \to ||xy||.$$
\end{theorem}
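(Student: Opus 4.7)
The strategy combines three ingredients: Voiculescu's asymptotic freeness (Theorem \ref{libre}) for moment-level convergence, the Haagerup--Thorbj\o rnsen strong convergence of polynomials in GUE matrices (Theorem \ref{thm:HT}) to rule out spectral outliers, and the continuity of the free multiplicative convolution (Corollary \ref{cor:boxplus-cont-util}) to reduce to the case of atomic limiting distributions. Since $A_n, B_n \ge 0$, one has $\|A_nB_n\|^2 = \|B_nA_n^2B_n\|$, and in the limit $\|xy\|^2 = [\mu_x \boxtimes \mu_y]^2$. Without loss of generality assume $B_n$ is unitarily invariant and write $B_n = U_nD_nU_n^*$ with $U_n$ Haar-distributed and $D_n$ diagonal deterministic. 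Theorem \ref{libre} then gives that $(A_n,B_n)$ is almost surely asymptotically $*$-free with limit $(x,y)$ free; in particular the spectral measure of the positive operator $B_nA_n^2B_n$ converges weakly to that of $yx^2y$, which yields the easy direction $\liminf_n\|A_nB_n\| \ge \|xy\|$ by a Portmanteau-type argument.

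The hard direction is $\limsup_n\|A_nB_n\|\le \|xy\|$, which is equivalent to the absence of spectral outliers in $B_nA_n^2B_n$. My plan is first to reduce to atomic spectra: given $\e>0$, approximate $\mu_x$ by an atomic measure $\mu_x^{(\e)} = \sum_i t_i\delta_{a_i}$ whose atoms $a_i$ lie in the support of $\mu_x$ with $\max_i a_i = \|x\|$, and similarly for $\mu_y$. I round each eigenvalue of $A_n$ up to the smallest atom of $\mu_x^{(\e)}$ exceeding it, on the eigenbasis of $A_n$, to produce $\tilde A_n^{(\e)} \ge A_n$ commuting with $A_n$; do the analogous construction for $B_n$. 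Since $A_n$ and $\tilde A_n^{(\e)}$ commute we have $A_n^2 \le (\tilde A_n^{(\e)})^2$, hence $\|A_nB_n\|^2 \le \|\tilde A_n^{(\e)}\tilde B_n^{(\e)}\|^2$. By Corollary \ref{cor:boxplus-cont-util} applied twice, using commutativity of $\boxtimes$ to replace first one then the other measure by its atomic approximant, $[\mu_x^{(\e)} \boxtimes \mu_y^{(\e)}] \to [\mu_x \boxtimes \mu_y]$ as the approximations refine. So it suffices to prove the theorem for $\tilde A_n^{(\e)},\tilde B_n^{(\e)}$ with prescribed finite atomic spectra.

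For such atomic spectra, the matrices $\tilde A_n^{(\e)}$ and $\tilde B_n^{(\e)}$ become linear combinations of finitely many spectral projections, and $\|\tilde A_n^{(\e)}\tilde B_n^{(\e)}\|$ becomes the operator norm of a fixed non-commutative polynomial in these projections and the Haar unitary $U_n$. To upgrade the moment convergence given by Theorem \ref{libre} to operator-norm convergence, I would realize $U_n$ via the polar decomposition of a Ginibre matrix constructed from two independent GUE matrices, and realize the spectral projections of $\tilde A_n^{(\e)}$ as sharp spectral cut-offs of a further independent GUE matrix, the cut-off levels being chosen so that the asymptotic traces match. Applying Theorem \ref{thm:HT} to the resulting polynomial expression in the GUE variables controls the required operator norm. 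The main obstacle is that both the spectral cut-offs (indicator functions) and the polar decomposition (inverse square root) are discontinuous functional calculi, whereas Theorem \ref{thm:HT} only handles polynomials directly; this must be overcome by a smoothing argument, approximating the discontinuous functions uniformly on a fixed compact containing the spectrum of the GUEs with high probability (using the absence of GUE outliers and the continuity of the semicircle limit at each cut-off point, so that the smoothing error decays). Implementing this last step rigorously is the technical heart of the proof.
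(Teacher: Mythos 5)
Your overall philosophy (transfer the problem to GUE matrices, invoke Theorem \ref{thm:HT}, and handle the non-polynomial functional calculus by approximation plus the continuity result) matches the paper's, but the concrete route you propose has a genuine gap at the step where you realize the Haar unitary $U_n$ as the polar part of a Ginibre matrix $G_n$ built from two independent GUEs and then claim to ``apply Theorem \ref{thm:HT} to the resulting polynomial expression.'' The polar part is $G_n(G_n^*G_n)^{-1/2}$, and the spectrum of $G_n^*G_n$ is asymptotically Marchenko--Pastur with parameter $1$, whose support reaches down to $0$ (indeed the smallest singular value of $G_n$ decays like $1/n$). The function $s\mapsto s^{-1/2}$ therefore cannot be approximated by polynomials uniformly on any compact set containing the spectrum, and no smoothing ``away from the singularity'' is available because there is no spectral gap at $0$. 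Strong (operator-norm) convergence for polynomials in Haar unitaries and deterministic matrices is exactly the Collins--Male theorem, which the paper explicitly points out appeared only after this work and does not follow from Theorem \ref{thm:HT} by such a reduction. A secondary, smaller issue: Corollary \ref{cor:boxplus-cont-util} gives continuity of $(a,t)\mapsto[\mu\boxtimes\nu(a,t)]$ for a \emph{fixed} number $m$ of atoms, so your two-fold application with refining atomic approximants ($m\to\infty$) is not literally covered by its statement.

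The paper circumvents the Haar-unitary obstruction entirely, and this is the idea you are missing. No Haar unitary is ever introduced: instead one takes two \emph{independent} GUE matrices $X_n,Y_n$ (whose eigenvector matrices are automatically independent Haar unitaries) and constructs $\tilde X_n,\tilde Y_n$, still i.i.d.\ GUE, sharing eigenbases with $A_n$ and $B_n$ respectively, so that $A_n=f_n(\tilde X_n)$ and $B_n=g_n(\tilde Y_n)$ for \emph{non-decreasing} c\`adl\`ag functions $f_n,g_n$. Monotonicity is then exploited instead of your atomic rounding: a non-decreasing bounded $S$ is sandwiched, $S_0^-<Q^-<S<Q^+<S_0^+$, between polynomials via Stone--Weierstrass applied to $\e$-shifted versions $S_0^\pm$ of the limit, and positivity gives the operator inequality $P(X_n)S(Y_n)P(X_n)\leq P(X_n)Q^+(Y_n)P(X_n)$, to which Theorem \ref{thm:HT} applies directly; the continuity corollary then removes the $\e$. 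A symmetric argument (using $\norm{a}^2=\norm{aa^*}$) replaces the remaining polynomial by a monotone function as well. If you want to salvage your plan, replace the polar-decomposition step by this eigenbasis-sharing construction; your spectral cut-offs $\ind_{[c,\iy)}$ are monotone and hence fall under the same sandwich argument, but the atomic discretization then becomes unnecessary.
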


Similar results have been obtained recently by C. Male \cite{male}. However, our results do not clearly follow from his. 
We also believe that the above theorem could be proved directly with determinantal processes methods, see e.g. \cite{defosseux, metcalfe},
at least in the case where one of the operators is a projection.

Note also that 6 months after the first version of this paper was completed, one author and C. Male used one key ingredient introduced in the proof
below to prove a substantial extension of  Theorem \ref{thm:HT} in the unitary case, see \cite{collins-male}.
The more recent main result of \cite{collins-male}, even though quite general, does not imply directly Theorem \ref{thm:RMT-norm} because our assumptions on
the spectrum of $A_{n},B_{n}$ are not as restrictive as in \cite{collins-male}.

\begin{proof}[Proof of Theorem \ref{thm:RMT-norm}]

Without loss of generality, we can assume that both $A_{n}$ and $B_{n}$ have distributions which are invariant under unitary conjugation (indeed, replacing the pair $(A_{n},B_{n})$ by $(V_{n}A_{n}V_{n}^{*},V_{n}B_{n}V_{n}^{*})$ where
$V_{n}$ is a Haar distributed random state independent from $(A_{n},B_{n})$ does not change the hypotheses nor $||A_{n}B_{n}||$, but enforces unitary invariance on both $A_{n}$ and $B_{n}$.

The main idea is to adapt Theorem \ref{thm:HT} to our case by showing that
in the case where $P$ of Theorem \ref{thm:HT} is of the form $P_{1}(x)P_{2}(y)$, it extends to the situation where $P_{1},P_{2}$ are any nondecreasing bounded functions.

In this proof we consider a pair $X_{n}, Y_{n} \in \M_n(\C)$ of i.i.d GUE random matrices and we split our proof into three steps. In the first two steps, we show how we can replace in Theorem \ref{thm:HT} polynomials by 
real, non-decreasing, c\`adl\`ag, non-negative and bounded functions.
In the third step, we show how, via functional calculus, we can modify the pair  $(X_{n},Y_{n})$ into a pair that has the same distribution as $(A_{n},B_{n})$.

\noindent \textbf{Step I.} First, we prove that if $P$ is any real positive polynomial and $S_0$ is a distribution function (real, non-decreasing, c\`adl\`ag and positive), then, for all $\e >0$, for a fixed small enough neighborhood $\mathcal V$ of $S_0$, almost surely, there exists $n_0 \in \N$ such that, for all $n \geq n_0$ and for all $S \in \mathcal V$,
\begin{equation}\label{eq:Step-I}
	| \norm{P(X_n)S(Y_n)P(X_n)}_\iy -  \norm{P(x)S(y)P(x)} | < \e,
\end{equation}
were $x,y$ are free semicircular elements in a $\mathrm{II}_1$ factor.

For $\e>0$, we introduce the functions $S^+_0(x) = S_0(x+\e)+\e$ and $S^+_0(x) = S_0(x-\e) - \e$. Clearly, we have $S^-_0 < S_0 < S^+_0$. Moreover, since the neighborhood $\mathcal V$ of $S_0$ can be chosen as small as we need to, we can choose it in such a way that for all $S \in \mathcal V$, the jumping points of $S$ are at distance at most $\e / 100$ from the jumping points of $S_0$. By Stone-Weierstrass theorem, there exist polynomials $Q^\pm$ such that, on the interval $[-3,3]$, for all $S \in \mathcal V$, $S^-_0 < Q^- < S < Q^+ < S^+_0$ (see Figure \ref{fig:Step-I}).
\begin{figure}[htbp]
\centering
\includegraphics{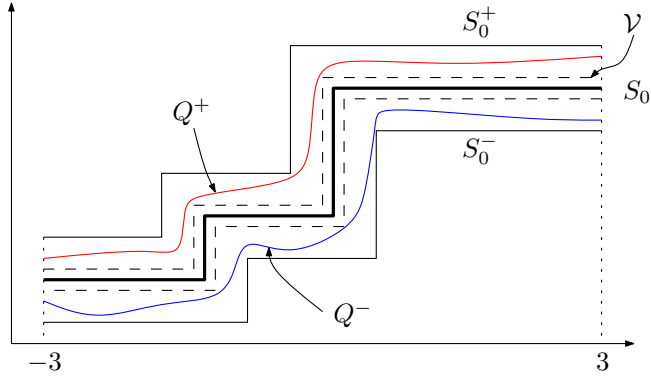}
\caption{Bounding distribution functions uniformly by polynomials.}
\label{fig:Step-I}
\end{figure}

The fact that almost surely the eigenvalues of $X_n$ and $Y_n$ are included in $[-3,3]$ as $n\to\infty$ implies that almost surely, for all $S \in \mathcal V$ and for $n$ large enough, $S(Y_n)< Q^+(Y_n)$. 
Therefore, almost surely for $n$ large enough, $P(X_n)S(Y_n)P(X_n)<P(X_n)Q^+(Y_n)P(X_n)$ and thus, using positivity,
$$\norm{P(X_n)S(Y_n)P(X_n)}_\iy<\norm{P(X_n)Q^+(Y_n)P(X_n)}_\iy.$$
However, $P$ and $Q^+$ are polynomials, therefore we can use Theorem \ref{thm:HT} to claim that
$\norm{P(X_n)Q^+(Y_n)P(X_n)}_\iy \to \norm{P(x)Q^+(y)P(x)}$.
We have shown that, almost surely for $n$ large enough,
$$\limsup_{n \to \iy} \norm{P(X_n)S(Y_n)P(X_n)}_\iy \leq \norm{P(x)Q^+(y)P(x)}.$$
In the von Neumann algebra generated by two free semicircular elements 
$x,y$, we have the inequality $P(x)Q^+(y)P(x)\leq P(x)
S^+_0(y)P(x)$, therefore 
$$\limsup_{n \to \iy} \norm{P(X_n)S(Y_n)P(X_n)}_\iy \leq\norm{P(x)
S^+_0(y)P(x)}.$$
Since this is true for all $\e >0$ and the norm is continuous 
according to Corollary \ref{cor:boxplus-cont-util}, by letting $\e \to 0$ we get
$$\limsup_{n \to \iy} \norm{P(X_n)S(Y_n)P(X_n)}_\iy \leq \norm{P(x)S(y)P(x)}.$$

A similar argument, using this time $Q^-$ and $S_0^-$ to bound from below elements $S \in \mathcal V$ proves the other inequality and completes the first step of the proof. Note however that the lower bound could have been obtained without using Theorem \ref{thm:HT}; indeed, one can use Voiculescu's result for the convergence of empirical spectral distributions of random matrices to conclude.

\noindent \textbf{Step II.} The second part of our proof is to show that that one can replace the polynomial $P$ in equation \eqref{eq:Step-I} by another function $T$ chosen from a neighborhood $\mathcal W$ of a given distribution function $T_0$. First, note that in Step I, one can interchange the roles of the polynomial $P$ and the step function $S$ by using the $C^*$ algebra equality, $\norm{a}^2=\norm{aa^*}$. Hence, $\norm{S(X_n)P(Y_n)S(X_n)}_\iy$ converges to $\norm{S(x)P(y)S(x)}$. Then, we employ the same technique as in Step I: we bound any element $T \in \mathcal W$ by fixed polynomials $P^\pm$ and we use Step I to conclude. Note that in the first two steps of the proof we have considered GUE matrices $X_n$ and $Y_n$.

\noindent \textbf{Step III.} 
In this final step, we consider our original sequence $(A_{n},B_{n})$ and show that our conclusion holds for it. For the purpose of its study, we introduce an auxiliary pair $(X_{n},Y_{n})$ of two 
 i.i.d Gaussian ensembles.
It is known that with probability one, all its eigenvalues have multiplicity one. So without loss of generality, we will assume that our instance
of $(X_{n},Y_{n})$ does not have multiplicity in its eigenvalues. Similarly, we assume that the normalized eigenvalue counting function of $X_n$ and $Y_n$ converges towards the semi-circle
and that their operator norm converges to 2. 
It is also possible to do so without loss of generality because of the well known convergence properties of the Gaussian unitary ensembles \cite{HT}.

From this, it follows that there exists two non-decreasing c\`adl\`ag functions $f_n$, $g_n$ such that the eigenvalues of $f_n(X_n)$ are the same as those of $A_n$ and
the eigenvalues of $g_n(Y_n)$ are the same as those of $B_n$. 

The functions $f_n$ and $g_n$ are not unique and are random, but it follows from our hypotheses on the limiting distributions of $A_n,B_n$ and 
our choice of $X_n,Y_n$ that it is possible to make sure that $f_n$ and $g_n$ converge uniformly.

Let us denote by $a_1\geq \ldots \geq a_n$ the eigenvalues of $A_n$, $b_1\geq \ldots \geq b_n$ the eigenvalues of $B_n$, 
$x_1> \ldots > x_n$ the eigenvalues of $X_n$, and $y_1> \ldots > y_n$ the eigenvalues of $Y_n$ (note that we
make a small abuse of notation for the sake of simplicity, and omit in the notation the dependence in $n$).
It follows from the above that for all $i$, $f_n(x_i)=a_i$ and $g_n(y_i)=b_i$.

Next, let us introduce the decomposition 
$A_n=U_a \mathrm{diag} (a_1,\ldots ,a_n)U_a^*$ and similarly for $B_n,X_n,Y_n$.
It is known that it is possible to make a choice for $U_a$ (resp.  $U_b,U_x,U_y$)  that depends from $A_n$ (resp. $B_n,X_n,Y_n$) in a measurable way.

Let $\tilde X_n=U_a \mathrm{diag} (x_1,\ldots ,x_n)U_a^*$ and $\tilde Y_n=U_b \mathrm{diag} (y_1,\ldots ,y_n)U_b^*$.

The matrices 
$\tilde X_n,\tilde Y_n$ are  random matrices and they have the property that $f_n(\tilde X_n)=A_n$ and $g_n(\tilde Y_n)=B_n$. 
Besides, they are independent from each other. Finally, they both follow the GUE distribution because the latter is known to be determined
by three criteria that are obviously satisfied in the construction of $\tilde X_n,\tilde Y_n$, namely:
(a) the distribution of its eigenvalues is the correct one, (b)  its eigenvalues and its eigenvectors are independent, and (c) its eigenvectors are distributed according to the invariant measure. 

We conclude the proof by an application of Step II to the matrices 
$\tilde X_n,\tilde Y_n$ with the functions $f_n,g_n$.

\end{proof}

\section{Asymptotic behaviour of $K_{n,k,t}$}\label{sec:main}

We now introduce the convex body $K_{k,t}\subset \Delta_k$ as follows:
\begin{equation}\label{eq:convex}
	K_{k,t}:=\{ \lambda \in\Delta_{k} \; |\; \forall a\in\Delta_{k} , \scalar{\lambda}{a} \leq \normt a
	\},
\end{equation}
where $\scalar{\cdot}{\cdot}$ denotes the canonical scalar product in $\R^k$. 
We shall show in Theorem \ref{convex-norm} that this set is intimately related to the $(t)$-norm: $K_{k,t}$ is the intersection of the dual ball of the $(t)$-norm with the probability simplex $\Delta_k$. Since it is defined by duality, $K_{k,t}$ is the intersection of the probability simplex with the half-spaces
$$
H^+(a, t) = \{x \in \R^k \; | \; \scalar{x}{a} \leq \normt{a}\}
$$
for all directions $a \in \Delta_k$. Moreover, we shall show in Theorem \ref{thm:lln-largest-eigenvector} that every hyperplane $H(a, t) = \{x \in \R^k \; | \; \scalar{x}{a} = \normt{a}\}$ is a supporting hyperplane for $K_{k,t}$.

\subsection{A set of probability one and statement of the results}

Let $(\Omega,\mathcal F,\P)$ be a probability space in which the sequence or random vector subspaces $(V_{n})_{n\geq 1}$ is defined.
Since we assume that the elements of this sequence are independent, we may assume that
$\Omega = \prod_{n\geq 1}\Gr_N(\C^k\otimes \C^n)$
and $\P=\otimes_{n\geq 1}\mu_{n}$ where $\mu_{n}$ is the invariant measure on the Grassman manifold $\Gr_N(\C^k\otimes \C^n)$.
Let $P_{n} \in \M_{nk}(\C)$ be the random orthogonal projection whose image is $V_{n}$. For two positive sequences $(a_n)_n$ and $(b_n)_n$, we write $a_n \ll b_n$ iff $a_n / b_n \to 0$ as $n \to \iy$.
\begin{proposition}\label{prop:set-of-proba-one}
Let $\nu_{n}$ be a sequence of integers satisfying $\nu_{n}\ll n$.
Almost surely, the following holds true: 
for any self-adjoint matrix $A\in \M_{k}(\C)$, 
the $\nu_{n}$-th largest eigenvalues of $P_{n}(A\otimes I_{n})P_{n}$
converges to $||a||_{(t)}$
where $a$ is the eigenvalue vector of $A$. 
This convergence is uniform on any compact set of $\M_{k}(\C)_{sa}$.
\end{proposition}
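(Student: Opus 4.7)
The plan is to proceed in three steps. First, for a fixed self-adjoint $A\in\M_k(\C)$, I would establish almost-sure convergence of the operator norm $\|P_n(A\otimes I_n)P_n\|$ to $\|a\|_{(t)}$ by invoking Theorem~\ref{thm:RMT-norm}. Second, I would upgrade this to convergence of the $\nu_n$-th largest eigenvalue by exploiting $\nu_n\ll n$ together with weak convergence of the empirical spectrum. Third, I would flip the quantifiers ``for every $A$'' and ``almost surely'' and obtain uniformity on compact sets, via a countable dense subset combined with Lipschitz and continuity arguments.

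For the first step, after shifting $A\mapsto A+\|A\|I_k$, which only translates the spectrum of $P_n(A\otimes I_n)P_n|_{V_n}$ by $\|A\|$, I may assume $A\geq 0$. Then the identity
\[
\|P_n(A\otimes I_n)P_n\|=\|P_n(A\otimes I_n)^{1/2}\|^2,
\]
valid because both sides equal $\|MM^*\|$ for $M=P_n(A\otimes I_n)^{1/2}$, lets me apply Theorem~\ref{thm:RMT-norm} to the pair $(P_n,(A\otimes I_n)^{1/2})$. Both matrices are positive, $P_n$ is Haar-random (hence unitarily invariant), the eigenvalue distributions converge to $(1-t)\delta_0+t\delta_1$ and $\mu_{A^{1/2}}=\frac{1}{k}\sum_i\delta_{\sqrt{a_i}}$ respectively, and the operator norms to $1$ and $\sqrt{\|A\|}$. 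Theorem~\ref{libre} ensures that the limits $p_t$ and $a^{1/2}$ are free, so Theorem~\ref{thm:RMT-norm} yields $\|P_n(A\otimes I_n)^{1/2}\|\to\|p_t a^{1/2}\|$ almost surely; squaring gives $\|P_n(A\otimes I_n)P_n\|\to\|p_t a p_t\|_\infty=\|a\|_{(t)}$, and since $P_n(A\otimes I_n)P_n$ is positive this equals $\lambda_1(P_n(A\otimes I_n)P_n)$.

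For the second step, the same asymptotic freeness (Theorem~\ref{libre}) implies that the empirical spectral measure $\mu_n=(nk)^{-1}\sum_i\delta_{\lambda_i(P_n(A\otimes I_n)P_n)}$ converges almost surely weakly to the distribution $\mu$ of $p_t a p_t$, whose upper support endpoint is $\|a\|_{(t)}$. Since $\nu_n/(nk)\to 0$, for any $\varepsilon>0$ one has $\nu_n\leq\varepsilon nk$ eventually, and therefore
\[
\lambda_{\lceil\varepsilon nk\rceil}(P_n(A\otimes I_n)P_n)\leq\lambda_{\nu_n}(P_n(A\otimes I_n)P_n)\leq\lambda_1(P_n(A\otimes I_n)P_n).
\]
The right-hand side converges to $\|a\|_{(t)}$ by the first step, while the left-hand side converges to the $(1-\varepsilon)$-quantile of $\mu$, which increases to $\|a\|_{(t)}$ as $\varepsilon\downarrow 0$. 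A squeeze argument then gives $\lambda_{\nu_n}(P_n(A\otimes I_n)P_n)\to\|a\|_{(t)}$ almost surely, for each fixed $A$.

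The third step is where the main obstacle lies: one must turn a family of ``for each $A$, almost surely'' statements into a single almost-sure event on which convergence holds simultaneously for all $A$ in each compact subset, and uniformly in $A$. Two ingredients handle this. First, Weyl's inequality yields
\[
|\lambda_{\nu_n}(P_n(A\otimes I_n)P_n)-\lambda_{\nu_n}(P_n(A'\otimes I_n)P_n)|\leq\|A-A'\|,
\]
uniformly in $n$ and in the random sample. Second, Corollary~\ref{cor:boxplus-cont-util} (applied with $\mu=(1-t)\delta_0+t\delta_1$ and equal weights $t_j=1/k$) makes the map $a\mapsto\|a\|_{(t)}=[\mu_{p_t}\boxtimes\mu_a]$ continuous on the positive cone and, by the shift from Step~1, $A\mapsto\|a\|_{(t)}$ continuous on $\M_k(\C)_{\mathrm{sa}}$. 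Fixing a countable dense subset $\{A_m\}_{m\geq 1}\subset\M_k(\C)_{\mathrm{sa}}$, the previous two steps hold on a common probability-one event $\Omega_0$ simultaneously for every $A_m$. For any compact $K\subset\M_k(\C)_{\mathrm{sa}}$ and any $\varepsilon>0$, covering $K$ by finitely many $\varepsilon$-balls centered at points of $\{A_m\}$ and combining the $1$-Lipschitz control on $\lambda_{\nu_n}$ with the uniform continuity of $\|\cdot\|_{(t)}$ on $K$ yields uniform convergence on $K$ via a standard $3\varepsilon$-argument.
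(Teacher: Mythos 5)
Your proof is correct and follows the same route as the paper: almost-sure convergence for each fixed $A$ via Theorem \ref{thm:RMT-norm} together with asymptotic freeness (Theorem \ref{libre}), followed by a countable dense family combined with the $1$-Lipschitz dependence of $\lambda_{\nu_n}(P_n(A\otimes \I_n)P_n)$ on $A$ (Weyl) and the continuity of $A\mapsto\normt{a}$ to obtain the simultaneous, locally uniform statement. You in fact supply more detail than the paper does for the fixed-$A$ step — notably the reduction to $A\geq 0$, the $C^*$-identity $\|P_n(A\otimes \I_n)P_n\|=\|P_n(A\otimes \I_n)^{1/2}\|^2$ that makes Theorem \ref{thm:RMT-norm} applicable, and the empirical-spectral-distribution squeeze passing from the largest to the $\nu_n$-th largest eigenvalue using $\nu_n\ll n$ — all of which the paper leaves implicit.
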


\begin{proof}
For any self-adjoint $A\in \M_{k}(\C)$, the almost sure convergence follows from Theorem 
\ref{thm:RMT-norm}
and from Theorem \ref{libre}.

Let $A_{l}$ be a countable family of self-adjoint matrices in $\M_{k}(\C)$ and assume that
their union is dense in the operator norm unit ball.
By sigma-additivity, 
the property to be proved holds almost-surely simultaneously for
all  $A_{l}$'s. 

This implies that the property holds for all $A$ almost-surely, as the $j$-th largest eigenvalue
of a random matrix is a 
Lipschitz function for the operator norm on the space of matrices. 
\end{proof}

The set on which the conclusion of the above proposition holds true will be denoted by $\Omega'$ and we therefore have $\P(\Omega ')=1$. Technically, $\Omega'$ depends on $\nu_{n}$ but in the proofs, we won't need to keep track of this dependence as  $\nu_n$ will be a fixed sequence.

The main result of our paper is the following characterization of the asymptotic behavior of the random set $K_{n,k,t}$. We show that this set converges, in a very strong sense, to the convex body $K_{k,t}$.

\begin{theorem}\label{thm:main}
\label{limite}
Almost surely, the following holds true:
\begin{itemize}
\item Let $\mathcal{O}$ be an open set in $\Delta_{k}$ containing $K_{k,t}$.
Then,  for $n$ large enough, $K_{n,k,t}\subset \mathcal{O}$.
\item Let $\mathcal K$ be a compact set in the interior of $K_{k,t}$.
Then, for $n$ large enough, $\mathcal K \subset K_{n,k,t}$.
\end{itemize}
\end{theorem}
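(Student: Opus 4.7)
The plan is to use the dual characterisation \eqref{eq:convex} of $K_{k,t}$ together with Proposition \ref{prop:set-of-proba-one}. Given a unit vector $x \in V_n$ with Schmidt decomposition $x=\sum_{i}\sqrt{\lambda_{i}(x)}\,e_i\otimes f_i$, and any $a\in\Delta_k$, set $A=\sum_{i}a_{i}|e_i\rangle\langle e_i|\in \M_k(\C)_{sa}$. Then $A$ has spectrum $a$ and a direct computation gives
\[
\scalar{\lambda(x)}{a} \;=\; \langle x,(A\otimes I_{n})x\rangle \;=\; \langle x,P_n(A\otimes I_{n})P_n x\rangle \;\leq\; \|P_{n}(A\otimes I_{n})P_{n}\|.
\]
Applying Proposition \ref{prop:set-of-proba-one} with $\nu_n=1$, the right-hand side converges almost surely to $\normt{a}$, uniformly over $A$ in the compact set $\{A\in\M_k(\C)_{sa} : 0\leq A,\ \|A\|_\infty\leq 1\}$, which contains every diagonalisation of every $a\in\Delta_k$. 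Combined with the continuity of $\normt{\cdot}$ on $\Delta_k$, this yields: almost surely, for any $\varepsilon>0$ and all $n$ large, $\scalar{\lambda}{a}\leq\normt{a}+\varepsilon$ for every $\lambda\in K_{n,k,t}$ and every $a\in\Delta_k$ simultaneously. Thus $K_{n,k,t}$ lies in an $\varepsilon$-thickening of $K_{k,t}$; since any open $\mathcal{O}\supset K_{k,t}$ contains such a thickening, the first inclusion follows.

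\textbf{Second inclusion.} For this inclusion, the plan is to use the eigenvector law of large numbers (Theorem \ref{thm:lln-largest-eigenvector}) announced in the introduction. The crucial observation is that for any positive $A\in\M_k(\C)$ with spectrum $a\in\Delta_k$ and any $\nu_n=o(n)$, the $\nu_n$-th eigenvector $y_n$ of $P_n(A\otimes I_n)P_n$ belongs by construction to $V_n$, so its singular value vector $\lambda(y_n)$ automatically lies in $K_{n,k,t}$. Theorem \ref{thm:lln-largest-eigenvector} produces an explicit almost sure limit $\lambda^\star(A)\in\Delta_k^\downarrow$ for this random vector, and Proposition \ref{prop:set-of-proba-one} forces $\scalar{\lambda^\star(A)}{a}=\normt{a}$, so $\lambda^\star(A)$ lies on the supporting hyperplane $H(a,t)$ of $K_{k,t}$. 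Varying both the spectrum $a$ and the eigenbasis of $A$, the family of limits $\lambda^\star(A)$ sweeps out (asymptotically) the boundary of $K_{k,t}$ realised inside $K_{n,k,t}$.

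To reach interior points $\lambda^\star\in\mathrm{int}(K_{k,t})$, the strategy exploits two extra degrees of freedom: (i) the top $o(n)$ eigenvalues of $P_n(A\otimes I_n)P_n$ are quasi-degenerate, all converging to $\normt{a}$, supplying a large subspace of $V_n$ of near-top eigenvectors whose Schmidt structure varies continuously; (ii) distinct choices of the spectral direction $a$ realise distinct boundary points of $K_{k,t}$, all inside the common ambient $V_n$. A careful analysis — built on the explicit form of the limit in Theorem \ref{thm:lln-largest-eigenvector} — combines these ingredients with a continuity and parameter-sweeping argument, leveraging the convexity and compactness of $K_{k,t}$, to realise unit vectors in $V_n$ whose singular value vectors asymptotically cover any compact $\mathcal K\subset\mathrm{int}(K_{k,t})$.

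\textbf{Main obstacle.} The first inclusion is essentially a matter of scalar norm convergence and follows cleanly from Proposition \ref{prop:set-of-proba-one}. The real difficulty is the second inclusion: spectral information alone is insufficient, and one needs precise structural control of the \emph{eigenvectors}, as provided by Theorem \ref{thm:lln-largest-eigenvector}. The delicate step is the geometric passage from a parametric family of realised boundary points of $K_{k,t}$ to full interior coverage: the singular value map is nonlinear, $K_{n,k,t}$ is not a priori convex, and one must navigate the quasi-degenerate near-top eigenspaces and blend eigenvectors across different spectral directions to continuously interpolate. This is where the bulk of the technical work of the second inclusion will concentrate.
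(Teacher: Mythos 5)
Your first inclusion is correct and is essentially the paper's argument: the identity $\scalar{\lambda(x)}{a}=\langle x,(A\otimes I_n)x\rangle\le\|P_n(A\otimes I_n)P_n\|$ is exactly the content of Lemmas \ref{lem:elementaire} and \ref{lem:sup-B}, and your appeal to the uniformity in Proposition \ref{prop:set-of-proba-one} plays the role of the finite $\e$-net over the unitary orbit (Lemma \ref{lem:estimate-grassmannian}) combined with finitely many applications of Theorem \ref{thm:RMT-norm}. The only cosmetic point is that passing from ``$\scalar{\lambda}{a}\le\normt{a}+\e$ for all $a$'' to containment in a prescribed open $\mathcal O$ requires a finite family of directions $a_1,\dots,a_L$ chosen by compactness, which you implicitly assume; this is how the paper closes that step.

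For the second inclusion there is a genuine gap. You correctly identify that the eigenvectors of $P_n(A\otimes I_n)P_n$ realise (asymptotically) the exposed points of $K_{k,t}$, and that one must somehow ``blend'' vectors attached to different directions $a$; but the blending is precisely where your argument has no mechanism. The singular value map is nonlinear, so a linear combination $\sum_j\alpha_j x_j$ of vectors realising different exposed points has, in general, uncontrollable Schmidt coefficients. The paper's resolution has two specific ingredients you are missing. First, Proposition \ref{prop:lln-subspace}: among the $\nu_n$ quasi-degenerate near-top eigenvectors one extracts, by a Parseval/union-bound argument, an $l$-dimensional subspace $V_n'\subset V_n$ \emph{all} of whose unit vectors have singular values near $\lambda$ and whose $\C^k$ Schmidt vectors are near the eigenbasis of $A$; crucially this lets one choose representatives whose $\C^n$ Schmidt vectors are (almost) orthogonal to any prescribed finite set. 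Second, in Theorem \ref{thm:lower-bound} one picks finitely many exposed points $A_1,\dots,A_m$ whose convex hull contains a neighbourhood of the target ball (this uses Straszewicz's theorem that exposed points are dense in extreme points), and then, by Gram--Schmidt inside the subspaces $V_i'$, selects $x_1,\dots,x_m$ whose $\C^n$ Schmidt vectors are mutually orthogonal \emph{across} different $i$. Only because of this orthogonality do the Schmidt coefficients of $\sum_j\alpha_j x_j$ become an explicit (approximately convex) combination of the boundary data, and Lemma \ref{lem:continuity-hausdorff} then converts the approximation into the desired inclusion $\mathcal K\subset K_{n,k,t}$. Without the orthogonalisation step your ``continuity and parameter-sweeping argument'' cannot control the singular values of the interpolating vectors, so the passage from boundary points to interior coverage does not go through as written.
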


The proof of this theorem goes according to the following non-standard scheme: the first inclusion follows a strategy developed in
\cite{collins-nechita-2} and improves on it.
This is the object of Theorem \ref{thm:upper-bound}. Revisiting the strategy of proof of Theorem \ref{thm:upper-bound} gives rise to a 
result about eigenvectors of random matrices, as stated in Theorem \ref{thm:lln-largest-eigenvector} below, and in turn, 
Theorem \ref{thm:lln-largest-eigenvector} is needed to prove the second part of Theorem
\ref{thm:main}. This is the purpose of Theorem \ref{thm:lower-bound}.

Note that all the statements above are of almost sure nature. 
At first sight this looks unnatural because there is no assumption on the probability space on which the family of random matrices indexed by 
the dimension is defined. The only assumptions are on the $n$-dimensional marginals. The fact that the results hold with probably
one on any probability probability space having the appropriate marginals follows from arguments of Borel-Cantelli type. 

Instead of stating a result of convergence almost surely, it is also possible, in the spirit of e.g. \cite[Theorem 2.1.1]{agz}, 
to write down a theorem of convergence in probability. The benefit of doing so is that one does not need to bother to
realize all random matrices in a same probability space. Such a result actually follows from the above Theorem.
We could have chosen such an approach, but we felt that the technical details of the proof would have been more involved (in our proof
we intersect countably many probability one measurable subsets of an appropriate probability space).
Note also that Anderson, Guionnet and Zeitouni also state results of almost sure convergence  (see  for example \cite{agz} Exercise 2.1.16).
Similarly, in the original results by Haagerup and Thorbj{\o}rnsen, the convergence results are of almost sure nature.

A byproduct of the first part of  the above theorem, and a necessary step towards its second part is the following result, of independent interest in random matrix theory:

\begin{theorem}
\label{thm:lln-largest-eigenvector}
Consider a matrix $A = \diag(a)$ whose eigenvalue vector is $a \in \R^k$ and let $\nu_{n}$ be a sequence of integers satisfying $\nu_{n}\ll n$.
We assume that all eigenvalues of $A$ are simple.

Let $x^{(n)}$ be the unital eigenvector corresponding to the $\nu_{n}$-th largest eigenvalue of $P_{n}(A\otimes I_{n})P_{n}$, which admits a 
singular value decomposition
$$x^{(n)}=\sum_{i=1}^{k}\sqrt{\lambda_{i}^{(n)}}e_{i}^{(n)}\otimes f_{i}^{(n)}.$$
Then, almost surely, for each $i=1, 2, \ldots, k$, $e_{i}^{(n)}$ converges to the eigenvector corresponding to the $i$-th largest eigenvalue of $A$
(modulo a phase change).
Moreover, if $\lambda$ is the 
exposed point of $K_{k,t}$ such that the supporting hyperplane is defined by the direction $a$, then, almost surely
$$\lim_{n \to \iy} \lambda^{(n)} = \lambda.$$ 
\end{theorem}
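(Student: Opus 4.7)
Without loss of generality assume $a_1>a_2>\cdots>a_k\geq 0$ and $\sum_i a_i=1$ (the statement is scale- and permutation-invariant in $a$), so the eigenvector of $A=\diag(a)$ corresponding to the $i$-th largest eigenvalue is the $i$-th canonical basis vector $v_i\in\C^k$. The backbone of the proof is a simple algebraic identity: writing $B_n:=P_n(A\otimes I_n)P_n$, using $P_n x^{(n)}=x^{(n)}$ and the orthonormality of the $f_i^{(n)}$, the $\nu_n$-th eigenvalue $\mu_n$ of $B_n$ can be rewritten as
\[
\mu_n=\langle x^{(n)},(A\otimes I_n)x^{(n)}\rangle=\sum_{i=1}^k\lambda_i^{(n)}\langle e_i^{(n)},A e_i^{(n)}\rangle=\sum_{j=1}^k a_j\,p_j^{(n)},
\]
with $p_j^{(n)}:=\sum_i\lambda_i^{(n)}|e_{ij}^{(n)}|^2$; in vector form $p^{(n)}=(D^{(n)})^T\lambda^{(n)}\in\Delta_k$, where $D^{(n)}_{ij}:=|e_{ij}^{(n)}|^2$ is doubly stochastic, and hence $p^{(n)}$ lies in the convex hull of the permutations of $\lambda^{(n)}$.

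By Proposition \ref{prop:set-of-proba-one}, $\mu_n\to\normt{a}$ almost surely, so $\langle a,p^{(n)}\rangle\to\normt{a}$, while Theorem \ref{thm:upper-bound} (the first inclusion) ensures that $\lambda^{(n)}\in K_{n,k,t}$ eventually sits in any prescribed neighborhood of $K_{k,t}$. Since $K_{k,t}$ is convex and permutation-invariant, the same is true of $p^{(n)}$. Passing to a subsequence we may therefore assume $\lambda^{(n)}\to\lambda^*\in K_{k,t}\cap\Delta_k^\downarrow$, $p^{(n)}\to p^*\in K_{k,t}$ and $D^{(n)}\to D^*$ doubly stochastic, with $p^*=(D^*)^T\lambda^*$ and $\langle a,p^*\rangle=\normt{a}$. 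Because $p^*\in K_{k,t}$ attains the value $\normt{a}$ against $a$, the hyperplane $\{x:\langle a,x\rangle=\normt{a}\}$ supports $K_{k,t}$ at $p^*$, establishing en passant the supporting-hyperplane assertion announced before the statement; uniqueness of the exposed point in direction $a$ then gives $p^*=\lambda$.

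For $\lambda^*$ itself, the rearrangement inequality (with both $a$ and $\lambda^*$ sorted decreasingly) yields
\[
\langle a,\lambda\rangle=\langle a,(D^*)^T\lambda^*\rangle=\langle D^*a,\lambda^*\rangle\le\langle a,\lambda^*\rangle\le\normt{a}=\langle a,\lambda\rangle,
\]
the last inequality coming from $\lambda^*\in K_{k,t}$ and $a\in\Delta_k$. Equality throughout forces $\lambda^*$ to be a maximizer of $\langle a,\cdot\rangle$ on $K_{k,t}$, so $\lambda^*=\lambda$ by uniqueness; as every subsequence shares this limit, $\lambda^{(n)}\to\lambda$ in full. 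The eigenvector statement then follows from the additional relation $(D^*)^T\lambda=\lambda$ satisfied by any subsequential limit $D^*$ of the $D^{(n)}$: when $\lambda$ has pairwise distinct coordinates, a direct computation shows this forces $D^*=I$, so $|e_{ij}^{(n)}|^2\to\delta_{ij}$ and $e_i^{(n)}\to v_i$ modulo a unit-modulus scalar.

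The main obstacle, in my view, is the degenerate case where the exposed point $\lambda$ has repeated coordinates: then $(D^*)^T\lambda=\lambda$ fails to determine $D^*$ on the blocks of constancy of $\lambda$, and the pinning of $D^{(n)}$ to the identity breaks down. I would circumvent this by a perturbation device: replace $A$ by $A+\e M$ for a diagonal $M$ chosen so that the exposed point of $K_{k,t}$ in direction $a+\e m$ has all distinct coordinates, apply the non-degenerate argument above to the perturbed problem, and then let $\e\to 0$ using the uniformity on compacts of $\M_k(\C)_{\mathrm{sa}}$ provided by Proposition \ref{prop:set-of-proba-one} together with the continuous dependence of simple eigenvectors under perturbation.
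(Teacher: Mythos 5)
Your argument takes essentially the same route as the paper. Both proofs hinge on the identity $\mu_n = s^A(x^{(n)})$---which in your notation reads $\mu_n = \sum_j a_j p_j^{(n)}$ with $p^{(n)} = (D^{(n)})^T \lambda^{(n)}$---derived from the eigenvalue equation and $P_n x^{(n)} = x^{(n)}$; both combine the resulting lower bound $\scalar{a}{(\lambda^{(n)})^\downarrow} \geq \normt{a} - \e$ with the upper bound inherited from Theorem~\ref{thm:upper-bound}; and both conclude via uniqueness of the exposed point. For the Schmidt vectors, your doubly-stochastic reformulation (the fixed-point relation $(D^*)^T\lambda=\lambda$ together with $\lambda$ having distinct entries forcing $D^*=I$) is a tidier, more explicit rendering of the paper's appeal to the unique maximizer of $U\mapsto\trace(AUBU^*)$; the rigidity of doubly stochastic fixed points is exactly what underlies that uniqueness claim.

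The gap you flag---that $D^*$ is not pinned when $\lambda$ has repeated coordinates---is genuine, and the paper's proof carries the same implicit assumption: its ``only one maximum'' step silently requires $\trace_n P_{x^{(n)}}$ to have asymptotically well-separated eigenvalues. However, your perturbation device does not repair it as stated. Replacing $A$ by $A+\e M$ replaces the random matrix $P_n(A\otimes I_n)P_n$ by a different one, and because the eigenvalue spacings near the relevant spectral edge vanish as $n\to\infty$, the $\nu_n$-th eigenvector of the perturbed matrix has no reason to remain close to $x^{(n)}$ for any fixed $\e>0$; the uniformity on compacts supplied by Proposition~\ref{prop:set-of-proba-one} controls eigenvalues, not eigenvectors. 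A more direct way to dispose of the degeneracy, at least when $t<1/k$, is to observe from the gradient computation in the proof of Theorem~\ref{convex-norm} (equation \eqref{F'} and the display preceding \eqref{const-nabla}) that the exposed point $\lambda=\nabla\normt{a}$ has $j$-th component proportional to $(w-a_j)^{-2}$ for some $w>\max_j a_j$, hence pairwise distinct whenever the $a_j$ are; this removes the need for any perturbation and lets your otherwise correct argument close.
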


This theorem has its own interest from the random matrix point of view. 
Indeed it can be seen as a law of large numbers for the $\U(k)/\U(1)^{k}$ and the $\R^{k}$ components of the 
singular value decomposition of the eigenvectors. Even though many laws of large numbers have been obtained for eigenvalues, 
not much is known about the structure of eigenvectors (except \cite{nader}, \cite{benaych-rao} and references therein).

\subsection{Upper bound}
\label{sec-upperbound}
The first part of Theorem \ref{limite} is the following result:

\begin{theorem}
\label{thm:upper-bound}
Let $\mathcal{O}$ be an open set in $\Delta_{k}$ containing $K_{k,t}$.
Then almost surely, for $n$ large enough, $K_{n, k, t}\subset \mathcal{O}$.
\end{theorem}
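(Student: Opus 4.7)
The plan is to argue by contradiction via a compactness-plus-duality argument. Suppose the conclusion fails on an event of positive probability; then on that event there exist a subsequence $n_j\to\infty$ and vectors $\lambda_j\in K_{n_j,k,t}\setminus\mathcal O$. Since $\Delta_k\setminus\mathcal O$ is compact, we may pass to a further subsequence so that $\lambda_j\to\lambda^*\in\Delta_k\setminus\mathcal O\subseteq\Delta_k\setminus K_{k,t}$. Both $K_{n,k,t}$ and $K_{k,t}$ are invariant under coordinate permutations, and the $(t)$-norm is permutation invariant (Lemma \ref{lem:properties-t-norm-without-R-S-transform}), so after relabeling coordinates I may assume $\lambda_j$ and $\lambda^*$ are sorted in decreasing order. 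Since $\lambda^*\notin K_{k,t}$, the definition \eqref{eq:convex} yields a witness $a^*\in\Delta_k$ with $\scalar{\lambda^*}{a^*}>\normt{a^*}$; by the rearrangement inequality applied to the sorted $\lambda^*$, $a^*$ can be taken sorted decreasingly as well. Set $\delta:=\scalar{\lambda^*}{a^*}-\normt{a^*}>0$; by continuity, $\scalar{\lambda_j}{a^*}>\normt{a^*}+\delta/2$ for all large $j$.

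Now I would exploit the definition of $K_{n,k,t}$: pick $x_j\in V_{n_j}$ with $\|x_j\|=1$ whose singular value vector is $\lambda_j$ (sorted), with Schmidt decomposition $x_j=\sum_i\sqrt{\lambda_{j,i}}\,e_{j,i}\otimes f_{j,i}$. Let $A_j\in\M_k(\C)_{\mathrm{sa}}$ be the self-adjoint matrix diagonal in the basis $\{e_{j,i}\}_i$ with diagonal entries $a^*_i$. Then
\begin{equation*}
\scalar{x_j}{(A_j\otimes\I_{n_j})x_j}=\sum_{i=1}^k a^*_i\,\lambda_{j,i}=\scalar{a^*}{\lambda_j},
\end{equation*}
while on the other hand, since $x_j\in V_{n_j}=\mathrm{Im}\,P_{n_j}$,
\begin{equation*}
\scalar{x_j}{(A_j\otimes\I_{n_j})x_j}\leq\norm{P_{n_j}(A_j\otimes\I_{n_j})P_{n_j}}_\infty.
\end{equation*}

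The key step is to control the right-hand side via Proposition \ref{prop:set-of-proba-one} with $\nu_n=1$. The matrix $A_j$ is random (it depends on $x_j$ and hence on $P_{n_j}$), but it lives in the single compact set
$\mathcal C:=\{U\diag(a^*)U^*\colon U\in\U(k)\}\subset\M_k(\C)_{\mathrm{sa}}$,
a conjugation orbit on which every element has eigenvalue vector $a^*$. Thus $\normt{\cdot}$ applied to the eigenvalue vector is the constant $\normt{a^*}$ on $\mathcal C$, and the \emph{uniform} convergence asserted in Proposition \ref{prop:set-of-proba-one} gives, on the probability-one event $\Omega'$, that $\norm{P_{n_j}(A_j\otimes\I_{n_j})P_{n_j}}_\infty<\normt{a^*}+\delta/4$ for $j$ large, \emph{regardless} of which $A_j\in\mathcal C$ is selected by the random $x_j$. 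Combining with the equality above yields $\scalar{a^*}{\lambda_j}<\normt{a^*}+\delta/4$, contradicting $\scalar{a^*}{\lambda_j}>\normt{a^*}+\delta/2$.

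The principal delicate point is precisely the coupling between the random $x_j$ and the matrix $A_j$ we test against: without uniformity, we could only control $\norm{P_{n_j}(A\otimes\I_{n_j})P_{n_j}}_\infty$ for each fixed $A$ on its own almost-sure event, and a measurable selection $A_j=A_j(\omega)$ could a priori escape every such event. The uniform-on-compacta statement in Proposition \ref{prop:set-of-proba-one} is exactly what permits the conclusion to hold simultaneously for the whole orbit $\mathcal C$, and this is why the argument requires no further probabilistic input beyond that proposition and a diagonalization over a countable dense family of directions $a^*$.
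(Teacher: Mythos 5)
Your proof is correct, and the core identity it rests on --- that for a unit vector $x\in V_n$ with sorted singular value vector $\lambda$ and a sorted direction $a$, one has $\scalar{a}{\lambda}=\trace\bigl(P_x\,(A\otimes\I_n)\bigr)\le\norm{P_n(A\otimes\I_n)P_n}_\infty$ for the matrix $A$ diagonal in the $\C^k$-Schmidt basis of $x$ with diagonal $a$ --- is exactly the content of Lemma \ref{lem:sup-B} and the heart of the paper's proof. The organizational differences are genuine, though, and worth noting. The paper argues forward: it covers $K_{k,t}$ by finitely many half-spaces $\{z:\scalar{z^\downarrow}{a_i}\le\normt{a_i}+\e\}$ sitting inside $\mathcal O$, and for each direction $a_i$ it handles the coupling between the random Schmidt basis and the test matrix by an $\e$-net over the conjugation orbit (Lemma \ref{lem:estimate-grassmannian}), invoking Theorem \ref{thm:RMT-norm} once per net point. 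You instead run a contradiction/compactness argument that reduces everything to a single accumulation direction $a^*$, and you resolve the coupling by appealing directly to the uniform-on-compacta convergence already packaged in Proposition \ref{prop:set-of-proba-one} (with $\nu_n=1$), applied to the compact orbit $\mathcal C$. Since the paper proves Proposition \ref{prop:set-of-proba-one} before Theorem \ref{thm:upper-bound} but then does not actually invoke its uniformity in the proof of the latter, your route is arguably tighter: it eliminates both the finite covering of $K_{k,t}$ by half-spaces and the finite net over the orbit, replacing each by a single compactness appeal. The one caveat is a cosmetic framing issue: rather than ``suppose the conclusion fails on an event of positive probability'' (which raises a measurability question you do not need to address), it is cleaner to fix $\omega$ in the probability-one event $\Omega'$ of Proposition \ref{prop:set-of-proba-one} and show the conclusion holds for that $\omega$ by your contradiction argument; this avoids the measurability issue entirely and gives the statement on $\Omega'$ directly.
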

This result provides almost surely an upper bound for the set $K_{n,k,t}$.
The proof of this theorem  relies on Theorem \ref{thm:RMT-norm}, and on two lemmas, that are adapted from \cite{collins-nechita-2} and which we state and prove below.

\begin{lemma}
\label{lem:elementaire}
Let $Q \in \M_n(\C)$ be a self-adjoint projection and $R \in \M_n(\C)$ be a self-adjoint element. Then
\begin{equation}
	\norm{QRQ}_{\iy}=\max_{x \in \Image Q} \trace (P_xR),
\end{equation}
where $P_x$ denotes the orthogonal projection on the one-dimensional space $\C x$.
\end{lemma}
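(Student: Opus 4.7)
The plan is to reduce the operator norm on the left-hand side to a variational expression and then recognize that expression as a trace against the rank-one projection $P_x$. Since $R$ is self-adjoint, so is $QRQ$, and the spectral theorem gives
$$
\norm{QRQ}_\infty = \sup_{\|y\|=1}\, |\langle y, QRQy\rangle|.
$$
Using $Q=Q^*=Q^2$, I would rewrite $\langle y, QRQy\rangle = \langle Qy, R\,Qy\rangle$, so the quadratic form depends on $y$ only through $Qy \in \Image Q$. Writing $y = Qy + (I-Q)y$ as an orthogonal decomposition, we have $\|Qy\| \le \|y\| = 1$, with equality exactly when $y \in \Image Q$.

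Next, by homogeneity of $y\mapsto\langle Qy, R\,Qy\rangle$ together with the bound $\|Qy\|\le 1$, the supremum over unit vectors in $\C^n$ is the same as the supremum over unit vectors in $\Image Q$:
$$
\norm{QRQ}_\infty = \sup_{\substack{x \in \Image Q \\ \|x\|=1}} |\langle x, R x\rangle|.
$$
For any nonzero $x$, the orthogonal projection on $\C x$ is $P_x = xx^*/\|x\|^2$, so $\trace(P_x R) = \langle x, Rx\rangle/\|x\|^2$, which is the same quantity evaluated at $x/\|x\|$. Hence the right-hand side of the lemma is
$$
\max_{x \in \Image Q} \trace(P_x R) = \max_{\substack{x \in \Image Q \\ \|x\|=1}} \langle x, R x\rangle,
$$
with the maximum attained by compactness of the unit sphere in the finite-dimensional space $\Image Q$.

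The only subtle point (and the one I would flag as the main obstacle, though it is a minor one) is the discrepancy between $|\langle x, Rx\rangle|$ on one side and $\langle x, Rx\rangle$ on the other: in full generality one only gets the inequality $\max_{x\in\Image Q}\trace(P_xR) \le \norm{QRQ}_\infty$, with equality when the largest eigenvalue of $QRQ$ coincides with its operator norm. In every application in this paper, $R$ is of the form $A\otimes I_n$ with $A\ge 0$ (equivalently, $R\ge 0$), whence $QRQ\ge 0$ and $\norm{QRQ}_\infty$ equals its largest eigenvalue, i.e. the maximum of $\langle x, Rx\rangle$ over unit $x\in\Image Q$. This completes the identification of the two sides and the proof.
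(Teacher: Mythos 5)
Your proof is correct, and it is the standard argument one would expect here; the paper in fact states this lemma without any proof, so there is nothing to compare against. The chain $\norm{QRQ}_\infty=\sup_{\|y\|=1}|\langle Qy,RQy\rangle|=\max_{x\in\Image Q,\,\|x\|=1}|\langle x,Rx\rangle|$ (using that $Q$ maps the unit sphere onto the unit ball of $\Image Q$ and homogeneity of the quadratic form) together with $\trace(P_xR)=\langle x,Rx\rangle/\|x\|^2$ is exactly what is needed. Your caveat about the missing absolute value is a genuine observation, not a defect of your argument: as literally stated the lemma fails for, say, $R=-\I$ and $Q$ a nontrivial projection, where the left side is $1$ and the right side is $-1$. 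The identity does hold under the implicit positivity hypothesis, and indeed every invocation in the paper has $R=B\otimes \I_n$ with $B$ unitarily conjugate to a positive semidefinite $A$, so $QRQ\geq 0$ and its norm is its top eigenvalue, which is what your last step uses.
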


For two matrices $A, B \in \M_k(\C)$, we write $A \sim B$ if there exists a unitary operator $U \in \U(k)$ such that $A = U B U^*$. 
For a vector $x \in \C^k \otimes \C^n$ with Schmidt coefficients 
 $\lambda_1 \geq \lambda_2 \geq \cdots \geq  \lambda_k\geq 0$, 
 and an element $a \in \Delta_{k}^{\downarrow}$, we introduce the notation
 $$s^a (x) = a_1\lambda_1 +\ldots + a_k\lambda_k = \scalar{a}{\lambda}. $$
 Similarly, for a matrix  $A\in \M_{k}(\C)$,
we introduce the notation
$$s^{A}(x) :=\trace (P_x \cdot  A \otimes \I_n) = \trace(\trace_n P_x \cdot A),$$
where $\trace_{n}=\id_{k}\otimes\trace$ is the non-normalized conditional expectation
$\M_{nk}(\C)\to\M_{k}(\C)$.

\begin{lemma}
\label{lem:sup-B}
Let $A$ be a self-adjoint matrix with ordered eigenvalue vector $a \in\Delta_{k}^{\downarrow}$.
For each $x \in \C^k \otimes \C^n$,
the following holds true:
$$s^a (x)= \max_{A' \sim A} s^{A'}(x).$$
\end{lemma}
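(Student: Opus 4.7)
The plan is to reduce the statement to a standard trace inequality on $k \times k$ matrices by first computing the partial trace $\trace_n(P_x)$ explicitly in terms of the Schmidt data of $x$.

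First I would write the vector $x$ in its Schmidt form $x=\sum_{i=1}^{k}\sqrt{\lambda_i}\,e_i\otimes f_i$ as in \eqref{eq:vector-SVD}. Then a direct computation of $P_x = |x\rangle\langle x|$ gives
$$
P_x=\sum_{i,j=1}^{k}\sqrt{\lambda_i\lambda_j}\,|e_i\rangle\langle e_j|\otimes|f_i\rangle\langle f_j|,
$$
and applying $\trace_n=\id_k\otimes\trace$ and using orthonormality of the $f_i$'s yields
$$
\rho_x:=\trace_n(P_x)=\sum_{i=1}^{k}\lambda_i\,|e_i\rangle\langle e_i|.
$$
Hence $\rho_x$ is a positive self-adjoint element of $\M_k(\C)$ whose ordered eigenvalue vector is precisely the Schmidt vector $\lambda=(\lambda_1,\dots,\lambda_k)$ of $x$.

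Next I would rewrite the quantity to be maximized using the definition of $s^{A'}$ and the partial-trace identity:
$$
s^{A'}(x)=\trace(P_x\cdot A'\otimes I_n)=\trace\bigl(\trace_n(P_x)\cdot A'\bigr)=\trace(\rho_x A').
$$
Since $\{A'\colon A'\sim A\}$ is exactly the unitary orbit of $A$ in $\M_k(\C)$, we are therefore asked to compute $\max_{U\in\U(k)}\trace(\rho_x\, UAU^*)$, with $\rho_x$ having eigenvalues $\lambda_1\geq\cdots\geq\lambda_k\geq0$ and $A$ having eigenvalues $a_1\geq\cdots\geq a_k$ (this ordering holds because $a\in\Delta_k^{\downarrow}$).

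Finally I would invoke the classical von Neumann / Ky Fan trace inequality for self-adjoint matrices, which states that for any two self-adjoint $B,C\in\M_k(\C)$,
$$
\max_{U\in\U(k)}\trace(B\, UCU^*)=\sum_{i=1}^{k}\lambda_i^{\downarrow}(B)\lambda_i^{\downarrow}(C),
$$
with the maximum attained when $U$ aligns the eigenbasis of $C$ with the eigenbasis of $B$ in matching decreasing order. Applied with $B=\rho_x$ and $C=A$, this gives
$$
\max_{A'\sim A}s^{A'}(x)=\sum_{i=1}^{k}a_i\lambda_i=\langle a,\lambda\rangle=s^a(x),
$$
which is the desired equality. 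There is no real obstacle here: the only non-trivial input is the von Neumann trace inequality, and the matching of orderings is guaranteed because $a\in\Delta_k^{\downarrow}$ and the Schmidt coefficients are by convention decreasing.
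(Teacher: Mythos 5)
Your proof is correct and follows essentially the same route as the paper: both rely on the conditional-expectation property of the partial trace and the von Neumann/Ky Fan trace inequality on $\M_k(\C)$. The only difference is that you make explicit the computation of $\trace_n(P_x)=\sum_i\lambda_i\,|e_i\rangle\langle e_i|$ (hence that its eigenvalues are the Schmidt coefficients of $x$), a fact the paper uses implicitly when it sets $B=\trace_n P_x$ in the trace inequality.
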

\begin{proof}

For two matrices $A,B\in\M_{k}(\C)$ with respective eigenvalues $\mu_{1}\geq\ldots \geq\mu_{k}\geq 0$ and
$\lambda_{1}\geq\ldots \geq\lambda_{k}\geq 0$, it follows from the min-max theorem that
 $$\sum_{i}\lambda_{i}\mu_{i}=\max_{A'\sim A}\trace (A'B).$$

Letting $B=\trace_n P_x$, the above observation implies:
\begin{equation}
	s^a (x)=\max_{U\in \U(k)}\trace ( UAU^*\trace_n P_x ) = \max_{A' \sim A}\trace ( A'\trace_n P_x ).
\end{equation}
The conditional expectation property of the partial trace implies that 
\begin{equation}
	s^a (x)= \max_{A' \sim A} \trace(P_x \cdot A' \otimes \I_n) = \max_{A' \sim A} s^{A'}(x).
\end{equation}
\end{proof}
Since $k$ is a fixed parameter of our model, in order to compute the maximum in Lemma \ref{lem:elementaire} over the unitary orbit indexed 
by $\U(k)$, we can pick a finite but large enough number of elements of the corresponding orbit to obtain a good approximation of the maximum:

\begin{lemma}
\label{lem:estimate-grassmannian}
For a fixed self-adjoint matrix $A \in \M_k(\C)$ with eigenvalue vector $a \in \R^k$ and for all $\e >0$, there exist a finite number of matrices $B_1,\ldots ,B_l$ self-adjoint and
conjugated to $A$, such that, for all $x \in \C^{nk}$,
\begin{equation}
	\max_{i=1}^l \trace(P_x  \cdot B_i \otimes \I_n) \leq s^a(x) \leq \max_{i=1}^l \trace(P_x \cdot B_i \otimes \I_n) + \e.
\end{equation}
\end{lemma}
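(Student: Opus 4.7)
The plan is to combine the variational formula from Lemma \ref{lem:sup-B} with a compactness/covering argument on the unitary orbit of $A$, exploiting the fact that the functional $A' \mapsto \trace(P_x \cdot A' \otimes \I_n)$ is Lipschitz in $A'$ with a constant that does not depend on either $x$ or $n$.

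First, note that the left-hand inequality is essentially free: since every $B_i$ is conjugate to $A$, Lemma \ref{lem:sup-B} gives $\trace(P_x \cdot B_i \otimes \I_n) \leq s^{a}(x)$ for each $i$, and the maximum over a finite set is still bounded by $s^a(x)$. So one only needs to produce a finite family $\{B_i\}$ achieving the right-hand inequality uniformly in $x$ and $n$.

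For the upper bound, I would use the following Lipschitz estimate: for any unit vector $x \in \C^{nk}$ and any two self-adjoint matrices $A', A'' \in \M_k(\C)$,
\begin{equation*}
\bigl|\trace(P_x \cdot A' \otimes \I_n) - \trace(P_x \cdot A'' \otimes \I_n)\bigr| = \bigl|\langle x, (A'-A'')\otimes \I_n \, x\rangle \bigr| \leq \|(A'-A'')\otimes \I_n\|_\infty = \|A'-A''\|_\infty,
\end{equation*}
which is crucially independent of both $x$ and $n$. Then I would invoke compactness of the unitary orbit $\mathcal O_A = \{U A U^* : U \in \U(k)\} \subset \M_k(\C)_{sa}$ (it is the continuous image of the compact group $\U(k)$) to extract a finite $\varepsilon$-net $B_1, \ldots, B_l \in \mathcal O_A$ for the operator norm. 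For any $A' \sim A$ there is an index $i$ with $\|A' - B_i\|_\infty < \varepsilon$, so
\begin{equation*}
\trace(P_x \cdot A' \otimes \I_n) < \trace(P_x \cdot B_i \otimes \I_n) + \varepsilon \leq \max_{j=1}^{l}\trace(P_x \cdot B_j \otimes \I_n) + \varepsilon.
\end{equation*}
Taking the supremum over $A' \sim A$ and applying Lemma \ref{lem:sup-B} yields $s^a(x) \leq \max_j \trace(P_x \cdot B_j \otimes \I_n) + \varepsilon$, which is the desired bound.

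There is no real obstacle here: the whole content is that the orbit $\mathcal O_A$ is compact in dimension $k^2$ (independently of $n$), while the evaluation functional $A' \mapsto \trace(P_x \cdot A' \otimes \I_n)$ is $1$-Lipschitz with a constant that is uniform in the ambient dimension $nk$. This uniformity is what allows a single $\varepsilon$-net of $\mathcal O_A$ to serve simultaneously for all $x$ and all $n$, which is precisely the statement to be proved.
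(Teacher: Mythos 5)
Your proposal is correct and takes essentially the same approach as the paper: compactness of the unitary orbit $\mathcal O_A$ in $\M_k(\C)_{sa}$ yields a finite $\varepsilon$-net, and the uniform (in $x$ and $n$) bound $\module{\trace(P_x\cdot(B-B_i)\otimes\I_n)}\leq\norm{B-B_i}_\infty$ transfers the net to the desired two-sided estimate via Lemma~\ref{lem:sup-B}. The only cosmetic difference is that the paper evaluates at the maximizer $B$ directly rather than taking a supremum over the orbit afterwards.
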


\begin{proof}
We only need to prove the second inequality, the first one being a direct consequence of Lemma \ref{lem:sup-B}. 
Since the orbit under unitary conjugation of a self-adjoint matrix $A$ is compact for the metric $d(B, B') = \norm{B- B'}_{\iy}$, 
for all $\e >0$ there exists a covering of the orbit by a finite number of balls of radius $\e$ centered in $B_1, B_2, \ldots, B_l$. 
Fix some $x \in \C^{nk}$ and consider the element $B$ in the orbit of $A$ for which the maximum in the definition of $s^a(x)$ 
is attained. The matrix $B$ is inside some ball centered at $B_i$ and we have
\begin{equation}
	\begin{split}
		\trace(P_x\cdot B \otimes \I_n) &\leq  \trace(P_x \cdot B_i \otimes \I_n) + \module{\trace\left[ P_x\cdot (B_i - B)\otimes \I_n \right] } \\
		&\leq \trace(P_x \cdot B_i \otimes \I_n) + \norm{B_i-B}_{\iy}\leq \trace(P_x \cdot B_i \otimes \I_n) + \e
	\end{split}
\end{equation}
and the conclusion follows.
\end{proof}

Now we are ready to prove Theorem \ref{thm:upper-bound}.

\begin{proof}[Proof of Theorem \ref{thm:upper-bound}]
For a given open neighborhood $\mathcal O$ of $K_{k,t}$, one can find a small positive constant $\e$ and a finite 
number of ordered probability vectors $a_1, a_2, \ldots, a_L \in \Delta_k^\downarrow$ such that 
\begin{equation}
	K_{k,t} \subset \bigcap_{i = 1}^L \left\{z \in \Delta_k \; |\; \scalar{z^\downarrow}{a_i} \leq \normt{a_i}\right\} \subset \bigcap_{i = 1}^L \left\{z \in \Delta_k \; |\; \scalar{z^\downarrow}{a_i} \leq \normt{a_i}+\e\right\} \subset \mathcal O.
\end{equation}
Note that only the last inclusion is non-trivial in the above equation.
Consider a positive self-adjoint matrix $A \in \M_k(\C)$ with eigenvalue vector $a \in \Delta_k^\downarrow$ and $V_{n}$ a random vector space of dimension $N\sim tnk$. According to Theorem \ref{thm:RMT-norm}, almost surely, we have that
\begin{equation}
	\lim_{n \to \iy} \norm{P_{V_{n}} \cdot (A \otimes \I_n) \cdot P_{V_{n}}}_\iy= \normt{a}. 
\end{equation}
By Lemma \ref{lem:sup-B}, for every such subspace $V$, one also has that
\begin{equation}
	\max_{\substack{x \in V \\ \norm{x} = 1}} s^a(x) = \max_{\substack{x \in V \\ \norm{x} = 1}} \max_{B \sim A} 
	\trace(P_x \cdot B \otimes \I_n).
\end{equation}

Using the compactness argument in Lemma \ref{lem:estimate-grassmannian}, one can consider (at a cost of $\e$) only a finite number of matrices $B$:
\begin{equation}
	\max_{\substack{x \in V \\ \norm{x} = 1}} s^a(x) \leq \max_{i=1}^l \max_{\substack{x \in V \\ \norm{x} = 1}} \trace(P_x\cdot (B_i \otimes \I_n)) + \e = \max_{i=1}^l \norm{P_{V_{n}} B_i \otimes \I_n P_{V_{n}}}_\iy + \e.
\end{equation}

After after applying Theorem \ref{thm:RMT-norm} to each of the pairs $(B_j,P_{V_n})$, $1\le j\le l$,
one has that, almost surely,
\begin{equation}
	\limsup_{n \to \iy} \max_{\substack{x \in V \\ \norm{x} = 1}} s^{a}(x)\leq \normt{a} +\e.
\end{equation}

Using $L$ times the previous line of reasoning, by letting $a=a_i$ for $i=1, \ldots, L$, we obtain that, almost surely, for $n$ large enough, 
\begin{equation}
	K_{n,k,t} \subset \bigcap_{i = 1}^L \left\{z \in \Delta_k \; |\; \scalar{z^\downarrow}{a_i} \leq \normt{a_i}+\e\right\} \subset \mathcal O.
\end{equation}
\end{proof}

\subsection{Lower bound}

We start with the proof of Theorem \ref{thm:lln-largest-eigenvector}, needed for the second part of our main result, Theorem \ref{thm:main}.

\begin{proof}[Proof of Theorem \ref{thm:lln-largest-eigenvector}]
Since the set $\Omega'$ introduced after Proposition \ref{prop:set-of-proba-one} has probability one, we may pick a sequence $(V_{n})_{n\in\N}$ in the set $\Omega'$ defined after the Proposition \ref{prop:set-of-proba-one}. 

Let us consider the eigenvector $x^{(n)}$ of the $\nu_n$-th largest eigenvalue of $P_n (A \otimes \I_n) P_n$ and write its singular value (or Schmidt) decomposition:
$$x^{(n)}=\sum_{j=1}^{k}\sqrt{\lambda_{j}^{(n)}} e_{j}^{(n)}\otimes f_{j}^{(n)}.$$

To start, notice that since the range of the matrix $P_n (A \otimes \I_n) P_n$ is a subspace of $V_n$, one must have $x^{(n)} \in V_n$. It has been shown in the proof of Theorem \ref{thm:upper-bound} that
for any open set $\mathcal{O}$ containing $K_{k,t}$, the probability vector $\lambda^{(n)}$ is in $\mathcal{O}$, for $n$ large enough.
 
Using the fact that $x^{(n)}$ is the eigenvector corresponding to $\mu_{n}$, the $\nu_{n}$-th largest eigenvalue of $P_{n}(A\otimes I_{n})P_{n}$, we obtain that
$$P_{n}(A\otimes I_{n})P_{n}P_{x^{(n)}}=\mu_{n}P_{x^{(n)}}.$$
Recall that (Proposition \ref{prop:set-of-proba-one}) $\mu_{n}\geq \normt{a} - \e$ for $n$ large enough, thus
$$\trace( P_{n}(A\otimes I_{n})P_{n}P_{x^{(n)}} )\geq \normt{a}\trace P_{x^{(n)}} - \e,$$
where $a\in \Delta_{k}$ is the eigenvalue of $A$. 
Since $x^{(n)}\in V_n = \Image P_{n}$, it follows that $P_{n}P_{x^{(n)}}=P_{x^{(n)}}$. In addition, using the fact that $\trace P_{x^{(n)}}=1$, one obtains the following lower bound:
$$s^{A}(x^{(n)})\geq \normt{a} - \e.$$

This implies that for $n$ large enough, 
$$\lambda^{(n)}\in \mathcal{O}\cap \{z \; | \;  \scalar{z^\downarrow}{a} \geq \normt{a} - \e\}.$$

Hence, the hyperplane $H_a = \{z  \; | \; \scalar{z^\downarrow}{a} \leq ||a||_{(t)}\}$ is a supporting hyperplane for the convex set $K_{k,t} \subset \Delta_k$.

If $z$ is an \emph{exposed} point of $K_{k,t}$, defined by a hyperplane $H_a$ which intersects $K_{k,t}$ only at $z$, then $\lambda^{(n)}$ converges to the exposed point $z$, showing the first part of the result.

Next, we study the convergence of the Schmidt vectors $e_{i}^{(n)} \in \R^k$.
Let $B \sim A$ be a self-adjoint matrix in $\M_{k}(\C)$ with same eigenvalues as $A$. It follows from the proof of Theorem \ref{thm:upper-bound} that $s^B(x^{(n)}) \leq \normt{a} + \e$ for large enough $n$.

Hence, the function 
$$B \mapsto s^B(x^{(n)}) = \trace(B \cdot \trace_n P_{x^{(n)}})$$
is $2\e$-close to its maximum  at $B = A$. Using the general fact that the real 
function $$\U(k)\ni U \mapsto \trace (AUBU^{*})$$ 
is continuous and has only one maximum, achieved when the eigenvectors of $UAU^*$ are 
parallel to the eigenvectors of $B$ (and respecting the order of the eigenvalues), we can conclude the proof of the lower bound.
\end{proof}

The next result is an improvement over Theorem \ref{thm:lln-largest-eigenvector} and shows that we do not need to restrict 
ourselves to a single eigenvector $x^{(n)}$ but that we can choose $x$ in a vector space of arbitrary size (prescribed in advance) 
such that the conclusions of the above theorem still hold for $x$. This fact will be useful in the final step of the proof of the 
Theorem \ref{thm:lower-bound}, as it allows to perform a Gram-Schmidt orthogonalization procedure.

\begin{proposition}
\label{prop:lln-subspace}
Let $\lambda$ be an exposed point of $K_{k,t}$ and let $a$ be a direction of the supporting hyperplane tangent at $\lambda$. 
Then, for any $\e >0$ and any integer $l$, almost surely as $n\to\infty$, there exists a linear subspace $V'_n$ of $V_{n}$ 
of dimension $l$ such that for any norm $1$ vector $x$ of $V'_n$, the 
singular values of $x$ are $\e$-close to $\lambda$ 
and the vectors $e_{i}$ appearing in the singular value decomposition \eqref{eq:vector-SVD} of $x$ are $\e$-close to the vectors 
of a fixed orthonormal basis of $\C^k$.
\end{proposition}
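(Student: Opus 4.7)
The natural plan is to take $V'_n$ to be the $l$-dimensional top spectral subspace of $Z_n := P_n(A\otimes I_n)P_n$, where $A=\diag(a)$; that is, the span of any orthonormal system of $l$ eigenvectors of $Z_n$ associated to its $l$ largest eigenvalues. Applying Proposition \ref{prop:set-of-proba-one} to each of the constant sequences $\nu_n\equiv 1,2,\dots,l$ (all trivially $o(n)$) and intersecting the resulting countable collection of probability-one events, we obtain almost surely that the $l$ largest eigenvalues $\mu_1^{(n)}\ge\cdots\ge\mu_l^{(n)}$ of $Z_n$ all converge to $\normt{a}$. In particular, $\dim V'_n=l$ and $V'_n\subset\Image P_n=V_n$ for $n$ large enough.

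The crucial observation is that the lower bound on $s^A$ that drives the proof of Theorem \ref{thm:lln-largest-eigenvector} now holds \emph{uniformly} on the unit sphere of $V'_n$. Indeed, for any unit $y\in V'_n$, the relation $P_n y=y$ together with the Courant--Fischer principle yields
$$s^A(y)=\scalar{y}{(A\otimes I_n)y}=\scalar{y}{Z_n y}\ge \mu_l^{(n)}\ge \normt{a}-\e/2$$
as soon as $n$ is large enough. With this key input in hand, I would rerun the proof of Theorem \ref{thm:lln-largest-eigenvector} almost verbatim, with $y$ in place of $x^{(n)}$. Theorem \ref{thm:upper-bound} applied along direction $a$ provides the matching upper bound $s^a(y)=\scalar{\lambda(y)}{a}\le \normt{a}+\e/2$ simultaneously for every unit $y\in V_n$. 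Together these bounds trap $\lambda(y)$ in the intersection of a pre-chosen small neighborhood $\mathcal O$ of $K_{k,t}$ with the slab $\{z\in\Delta_k:\scalar{z}{a}\ge \normt{a}-\e/2\}$. Because $\lambda$ is an exposed point of $K_{k,t}$ with exposing direction $a$, this intersection can be made arbitrarily close to $\{\lambda\}$ by shrinking $\mathcal O$ and $\e$, yielding $\|\lambda(y)-\lambda\|<\e$ uniformly in $y$.

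For the left Schmidt vectors $e_i(y)$, the two-sided sandwich
$$\normt{a}-\e/2\le s^A(y)\le s^a(y)=\max_{A'\sim A}s^{A'}(y)\le \normt{a}+\e/2$$
(the middle equality being Lemma \ref{lem:sup-B}) places $A$ within $\e$ of the maximum of the continuous function $A'\mapsto \trace(A'\cdot \trace_n P_y)$ on the compact unitary orbit of $A$. Since $A$ has simple eigenvalues, this maximum is attained at a unique point of the orbit (modulo the stabilizer $\U(1)^k$), characterized by having its eigenvectors aligned in sorted order with those of $\trace_n P_y$. A standard continuity-and-compactness argument therefore forces the eigenvectors of $\trace_n P_y$, which are exactly the vectors $e_i(y)$ from the singular value decomposition of $y$, to be close (modulo phases) to the fixed eigenvectors of $A$.

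The main obstacle is the uniformity over the compact unit sphere of $V'_n$ of all these estimates, but fortunately it is essentially built into the construction. The lower bound on $s^A(y)$ is uniform by the variational argument above, the upper bound from Theorem \ref{thm:upper-bound} is a property of $V_n$ itself rather than of the particular $y$, and the map $y\mapsto \trace_n P_y$ is continuous on the compact unit sphere of $V'_n$, so the unitary-alignment step extends uniformly. A final diagonal choice relating $\e$ to the size of $\mathcal O$ in Theorem \ref{thm:upper-bound} completes the proof.
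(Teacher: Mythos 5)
Your proof is correct, and it takes a genuinely cleaner route than the paper's. The paper proceeds by induction on $l$: it builds $V'_n$ as the span of $l$ carefully chosen eigenvectors of $Z_n=P_n(A\otimes I_n)P_n$, each taken from among the top $\nu_n$ and selected (via a Parseval/union-bound counting argument) so that the Schmidt vectors $e_{i_1}\otimes f_{i_2}^{(j)}$ appearing in the SVDs of the previously chosen eigenvectors are nearly orthogonal to the new one. This near-orthogonality is what lets the paper compute the SVD of an arbitrary linear combination $\sum_j\alpha_j y_j$ essentially by hand (the $\sum_j\alpha_j f_i^{(j)}$ form a near-orthonormal family), and Lemma \ref{lem:continuity-hausdorff} is invoked to make that computation stable. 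You bypass the induction and the orthogonality bookkeeping entirely by taking $V'_n$ to be the top-$l$ spectral subspace of $Z_n$ and observing that the Rayleigh-quotient bound $\langle y, Z_n y\rangle\ge\mu_l^{(n)}$ holds for \emph{every} unit $y\in V'_n$, not only for eigenvectors; combined with $P_ny=y$ this gives $s^A(y)\ge\mu_l^{(n)}\to\normt{a}$ uniformly on the unit sphere of $V'_n$. From there the argument is literally the last part of Theorem \ref{thm:lln-largest-eigenvector}, which never used that its input vector was an eigenvector -- only the two-sided bound $\normt{a}-\e\le s^A(y)\le s^a(y)\le\normt{a}+\e$. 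What your approach buys is conceptual economy: the uniform lower bound comes for free from the variational principle, so no inductive construction is needed. What it gives up is the explicit approximate SVD of linear combinations (including approximate orthogonality of the right Schmidt vectors across the chosen $y_j$'s), which the paper in fact reuses in the Gram--Schmidt step of Theorem \ref{thm:lower-bound}; but that information can equally well be recovered after the fact by applying your Proposition to each $V'_i$ and then orthogonalizing, as the paper does, so nothing essential is lost.

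One small point worth noting (present in the paper's own argument as well and not a defect specific to your proof): the step extracting closeness of the $e_i(y)$ from near-maximality of $B\mapsto\trace(B\cdot\trace_nP_y)$ over the unitary orbit requires not only that $A$ have simple spectrum but also that $\lambda(y)$ have distinct entries; in practice one uses that $\lambda(y)\to\lambda$ and $\lambda$ has distinct coordinates (which holds, e.g., whenever $a$ is a non-degenerate direction, by the lemma opening Section \ref{sec:dual-ball}). Also, when citing Theorem \ref{thm:upper-bound} for the uniform upper bound $s^a(y)\le\normt{a}+\e/2$ on all of $V_n$, it is really the chain of estimates inside its proof (via Lemma \ref{lem:estimate-grassmannian} and Theorem \ref{thm:RMT-norm}) that yields this, not the bare containment $K_{n,k,t}\subset\mathcal O$; you clearly intend this, but it is worth stating to keep the logic self-contained.
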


\begin{proof}
We prove this theorem by induction over $l$. For $l=1$, this is Theorem \ref{thm:lln-largest-eigenvector}. 
In the remainder of the proof, our standing assumption is that almost surely 
as $n\to\infty$, there exists a linear subspace $V'_n$ of $V_{n}$ 
of dimension $l$, spanned by $l$ eigenvectors of $P_{n}(A\otimes I_{n})P_{n}$, such that for any norm $1$ vector $x$ of $V'_n$, the 
singular values of $x$ are $\e$-close to $\lambda$ 
and the vectors $e_{i}$ appearing in the singular value decomposition of $x$ are $\e$-close to the vectors 
of a fixed orthonormal basis of $\C^k$. Since the singular value decomposition of vectors \eqref{eq:vector-SVD} is continuous in all of its parameters, 
we can assume that the subspace $V_n'$ is spanned by $l$ eigenvectors $y_1, \ldots, y_l$ which satisfy
$$\forall 1 \leq j \leq l, \quad y_j = \sum_{i=1}^k \sqrt{\lambda_i} e_i \otimes f_i^{(j)} + \tilde{y_j},$$
where $e_i$ is the aforementioned fixed basis of $\C^k$ and $\tilde y_j $ is a correction of small norm:
$$\|\tilde y_j \| \leq \e.$$

Our task is to find an additional vector $y_{l+1} \in V_{n} \setminus V'_{n}$ such that the vector space
$V''_{n}=\spn \{y_{l+1},V'_{n}\}$ satisfies almost surely, as $n\to\infty$, for any norm $1$ vector $x$ of $V''_n$, the singular values of $x$ are $\e$-close to $\lambda$ and the $\C^k$ part
of its singular vectors are close to the $e_i$. 
As stated before, we shall choose $y_{l+1}$ to be an eigenvector of $P_{n}(A\otimes I_{n})P_{n}$. This choice being made 
for $y_1, \ldots, y_l$, it ensures the orthogonality relation $y_{l+1} \perp V_n'$.
In view of Theorem \ref{thm:lln-largest-eigenvector}, for this strategy to work, we need to choose
$y_{l+1}$ an eigenvector corresponding to a large eigenvalue; this ensures that $y_{l+1}$ itself satisfies the singular value and 
singular vector requirements. We now need to show that
\emph{every} vector of $V''_{n}=\spn \{ y_{l+1},V'_{n}\} = \spn \{ y_1, \ldots, y_{l+1} \}$ satisfies the same requirements. 

In order to conclude, we need to chose an eigenvector $y_{l+1}$ which is orthogonal to all the vectors in the set 
$$Y = \{ e_{i_1} \otimes f_{i_2}^{(j)} \; | \; 1 \leq i_1, i_2 \leq k, 1 \leq j \leq l \}.$$
This can be done, since we may choose $y_{l+1}$ from a list of $\nu_n$ eigenvectors of $P_{n}(A\otimes I_{n})P_{n}$ (corresponding to the $\nu_n$ largest eigenvalues).

Indeed, start from the simple observation that the $\nu_{n}$ eigenvectors associated with the $\nu_{n}$ largest eigenvalues of $P_{n}(A\otimes I_{n})P_{n}$ (call them $x_{1},\ldots ,x_{\nu_{n}}$), are orthogonal, and therefore satisfy the following Parseval inequality: 
$$\sum_{i=1}^{\nu_{n}}|\scalar{{x_{i}}}{y}|^{2}\leq 1$$
for any vector $\|y\|\leq 1$.
Therefore it follows that there are at least $\nu_{n}-\varepsilon^{-1}$ of them that satisfy
$$|\scalar{{x_{i}}}{y}|^{2}\leq \varepsilon .$$
Similarly, let now $Y$ be a finite collection of norm $1$ vectors. The union bound tells that there are 
at least $\nu_{n}-|Y|\varepsilon^{-1}$ of them that satisfy
$$|\scalar{{x_{i}}}{y}|^{2}\leq \varepsilon $$
for any $y\in Y$. As soon as $\nu_{n}>kl/\varepsilon$, we are guaranteed the existence of an eigenvector $y_{l+1}$ which is almost orthogonal to all the terms appearing in the singular value decomposition of each of $y_1, \ldots, y_l$. This implies that, for all $1 \leq i_1, i_2 \leq ki$ and $1 \leq j \leq l$, 
$$|\scalar{f_{i_1}^{(l+1)}}{f_{i_2}^{(j)}}| \leq 2 \e.$$

Let us now consider an arbitrary norm one vector in $V_n'' = \spn\{y_1, \ldots, y_{l+1}\}$ and compute its (approximate) singular value decomposition. Let $(\alpha_1, \ldots, \alpha_{l+1})$ be a unit norm vector in $\C^{l+1}$. 
$$\sum_{j=1}^{l+1} \alpha_j y_j = \sum_{i=1}^k \sqrt{\lambda_i} e_i \otimes \left[ \sum_{j=1}^{l+1} \alpha_j f_i^{(j)} \right] + \sum_{j=1}^{l+1} \alpha_j \tilde y_j. $$

Since the vectors $\sum_{j=1}^{l+1} \alpha_j f_i^{(j)}$ form an orthogonal family for $1 \leq i \leq k$, it follows  that the conclusion of Proposition \ref{prop:lln-subspace} holds at dimension $l+1$ and with an appropriately updated value of the error term $\e$.

\end{proof}

We also need the following elementary lemma:
\begin{lemma}
\label{lem:continuity-hausdorff}
Let $F:\R^{p}\times\R^{q}\to\R^{p}$ be a continuous map such that $F(\cdot , 0)=\id_{p}$. Let $K$ be a subset of $\R^{p}$ and $K'$ be a compact subset of the interior of $K$. Then, there exists a neighborhood of $0$ in $\R^{q}$ such that for any $y$ in this neighborhood, $K'\subset F(K,y)$.
\end{lemma}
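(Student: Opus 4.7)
The plan is to reduce the claim to an application of Brouwer's fixed point theorem on small closed balls around points of $K'$. The heuristic is that $F(\cdot, y)$ is a small perturbation of the identity when $y$ is near $0$, so it should be approximately surjective onto a neighbourhood of any point of $K$.

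First, since $K'$ is a compact subset of the interior of $K$, the continuous function $x \mapsto \mathrm{dist}(x, \R^p \setminus K)$ attains a positive minimum on $K'$, so there exists $\delta > 0$ such that the closed $\delta$-neighbourhood $K'_\delta := \{x \in \R^p \colon \mathrm{dist}(x, K') \leq \delta\}$ is contained in $K$. This set is compact, and since $F(x, 0) = x$, we can use uniform continuity of $F$ on the compact set $K'_\delta \times \overline{B(0,1)} \subset \R^p \times \R^q$ to find $\eta > 0$ such that
\begin{equation*}
|F(x, y) - x| < \delta \quad \text{for all } x \in K'_\delta \text{ and all } y \in \R^q \text{ with } |y| < \eta.
\end{equation*}

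Now fix $y$ with $|y| < \eta$ and an arbitrary point $z \in K'$. I want to produce some $x^* \in K$ with $F(x^*, y) = z$. Consider the continuous map
\begin{equation*}
\Phi_y \colon \overline{B(z, \delta)} \to \R^p, \qquad \Phi_y(x) = z + x - F(x, y).
\end{equation*}
For $x \in \overline{B(z, \delta)}$ we have $x \in K'_\delta$, so the previous uniform estimate gives $|\Phi_y(x) - z| = |x - F(x, y)| < \delta$; hence $\Phi_y$ maps the closed ball $\overline{B(z, \delta)}$ into itself. Brouwer's fixed point theorem then yields a fixed point $x^* \in \overline{B(z, \delta)} \subset K'_\delta \subset K$, and $\Phi_y(x^*) = x^*$ is precisely the equation $F(x^*, y) = z$. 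Therefore $z \in F(K, y)$, and since $z \in K'$ was arbitrary we conclude $K' \subset F(K, y)$ for every $|y| < \eta$.

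The only nontrivial step is the use of Brouwer's theorem (equivalently, a topological degree argument) to invert the small perturbation $F(\cdot, y)$ locally; all the other ingredients are standard compactness and uniform continuity arguments. No contraction hypothesis is available on $F$, which is why we cannot simply use the Banach fixed point theorem, but the self-map property of $\Phi_y$ on a closed Euclidean ball is exactly what Brouwer requires.
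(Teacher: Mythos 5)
Your proof is correct, and it takes a genuinely different (and in fact more robust) route than the paper. The paper's argument reduces to the observation that, after replacing $K$ by a bounded set, the correspondence $y\mapsto F(K,y)$ is continuous for the Hausdorff distance, and then asserts that the inclusion $K'\subset F(K,y)$ ``follows readily.'' Taken literally, Hausdorff continuity alone does not give this: a set can be Hausdorff-close to $K$ without containing any prescribed compact subset of the interior of $K$ (think of a fine $\varepsilon$-net of $K$, which has small Hausdorff distance to $K$ but omits almost every point). The missing ingredient is precisely the topological fact that a continuous map uniformly close to the identity on a closed ball is surjective onto a concentric smaller ball, and your argument supplies it explicitly via Brouwer's fixed point theorem applied to $\Phi_y(x)=z+x-F(x,y)$ on $\overline{B(z,\delta)}$. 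Your compactness and uniform continuity steps (choosing $\delta$ with $K'_\delta\subset K$, then $\eta$ with $|F(x,y)-x|<\delta$ on $K'_\delta$ for $|y|<\eta$) are exactly right; the only cosmetic points are that you should take $\eta\le 1$ so that the uniform continuity estimate on $K'_\delta\times\overline{B(0,1)}$ applies, and that the degenerate case $K=\R^p$ should be dispatched separately (any $\delta$ works there). In short, what your approach buys is a complete proof where the paper offers only a sketch whose ``readily follows'' step conceals the actual degree-theoretic content.
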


\begin{proof}
Since $K'$ is compact, without loss of generality we may assume that $K$ is bounded. The continuity assumption on $F$ and the boundedness of $K$ imply that the map $y \mapsto F(K,y)$ is continuous with respect to the Hausdorff distance. The result follows then readily from this observation.
\end{proof}

Finally we state a result that will complete the proof of Theorem \ref{thm:main}. 

\begin{theorem}
\label{thm:lower-bound}
For any compact set $\mathcal K$ contained in the interior of $K_{k,t}$, almost surely for $n$ large enough,  $\mathcal K \subset K_{n,k,t}$.
\end{theorem}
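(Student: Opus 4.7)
The plan is to prove this inclusion by combining a multi-direction extension of Proposition \ref{prop:lln-subspace} with a convex combination argument, and to conclude via Lemma \ref{lem:continuity-hausdorff}.

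First I would reduce to finitely many exposed points. By Krein--Milman together with Straszewicz's theorem, $K_{k,t}$ is the closed convex hull of its exposed points; since $\mathcal K$ is compactly contained in the interior, a compactness argument plus Carath\'eodory produces finitely many exposed points $\lambda^{(1)},\ldots,\lambda^{(M)}$ of $K_{k,t}$, with associated supporting directions $a^{(1)},\ldots,a^{(M)}\in\Delta_k^\downarrow$, whose convex hull contains $\mathcal K$ in its interior (we may restrict the supporting directions to $\Delta_k^\downarrow$ because $K_{n,k,t}$ is $S_k$-symmetric by construction).

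Next I would establish a multi-direction variant of Proposition \ref{prop:lln-subspace}. Taking $A^{(j)}=\diag(a^{(j)})$ for each $j$, so that all $A^{(j)}$ share the standard basis $(\delta_i)_{i=1}^k$ of $\C^k$ as common eigenvector basis, the claim is that almost surely for $n$ large enough one can select unit vectors $y_1,\ldots,y_M\in V_n$, with $y_j$ an eigenvector of $P_n(A^{(j)}\otimes I_n)P_n$ corresponding to one of its $\nu_n$ largest eigenvalues, such that: (a) the singular values of $y_j$ are $\varepsilon$-close to $\lambda^{(j)}$; (b) the left singular vectors of each $y_j$ are $\varepsilon$-close to $(\delta_i)_{i=1}^k$; and (c) the full family of right singular vectors $\{f_i^{(j)}\colon 1\le i\le k,\,1\le j\le M\}$ is $\varepsilon$-close to an orthonormal system in $\C^n$. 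Condition (c) is arranged by iterating exactly the Parseval plus union-bound selection argument that closes the proof of Proposition \ref{prop:lln-subspace}: since $\nu_n\to\infty$ while the total number of near-orthogonality constraints is the bounded quantity $kM$, an inductive choice succeeds.

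Given such an aligned family, any $\mu=\sum_{j=1}^M\alpha_j\lambda^{(j)}$ with $\alpha\in\Delta_M$ is realised, up to error $O(\varepsilon)$, as the singular value vector of the unit-norm combination $x(\alpha)=\sum_{j=1}^M\sqrt{\alpha_j}\,y_j\in V_n$. Indeed, writing $X(\alpha)\in\M_{k\times n}(\C)$ for the matrix associated with $x(\alpha)$ and using (b) and (c), a direct computation gives $X(\alpha)X(\alpha)^*\approx\diag(\mu)$, so the singular values of $x(\alpha)$ in the sense of Equation \eqref{eq:vector-SVD} are $O(\varepsilon)$-close to $\mu$. The map $\Delta_M\ni\alpha\mapsto\lambda(x(\alpha))\in\Delta_k$ being continuous and $\mathcal K$ compactly contained in the convex hull of the $\lambda^{(j)}$'s, an application of Lemma \ref{lem:continuity-hausdorff} to this $\varepsilon$-perturbation of the singular value map yields the desired inclusion $\mathcal K\subset K_{n,k,t}$ for all $n$ large enough.

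The main obstacle will be the multi-direction extension of Proposition \ref{prop:lln-subspace}, namely arranging the simultaneous approximate orthogonality of all the right singular vectors $\{f_i^{(j)}\}$ across the $kM$ pairs $(i,j)$ while keeping every left singular vector aligned with the common standard basis. Although this is conceptually a straightforward iteration of the Gram--Schmidt style argument already used in Proposition \ref{prop:lln-subspace}, the coordination across $M$ different matrices $A^{(j)}$, together with the need to absorb possibly different eigenvalue orderings of the $a^{(j)}$ into the $S_k$-symmetry of $K_{n,k,t}$, is where the technical care lies.
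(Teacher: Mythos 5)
Your proposal is correct and follows the same overall strategy as the paper: Straszewicz to reduce to finitely many exposed points whose convex hull traps $\mathcal K$, the law of large numbers for eigenvectors to align the left Schmidt vectors and pin the singular values, a Parseval/union-bound selection to make the right Schmidt vectors of the chosen eigenvectors nearly orthogonal across directions, combinations $\sum_j\sqrt{\alpha_j}\,y_j$ to realise convex combinations of the exposed points, and Lemma \ref{lem:continuity-hausdorff} to absorb the $O(\e)$ errors. There are two organisational differences worth noting. First, the paper lifts the problem to the ``quantum'' set $\hat K_{k,t}$ of $k\times k$ self-adjoint matrices with spectrum in $K_{k,t}$ and takes exposed points of that matrix body (hence matrices $A_i$ with varying eigenbases), whereas you stay at the level of eigenvalue vectors and use only diagonal matrices $\diag(a^{(j)})$; since the statement to be proved only concerns singular value vectors and not the $\U(k)$ part, your reduction is legitimate and somewhat leaner. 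Second, the paper invokes Proposition \ref{prop:lln-subspace} to produce an $l$-dimensional subspace $V_j'$ for each direction and then Gram--Schmidts across these subspaces, while you perform the orthogonalisation directly at the eigenvector-selection stage, choosing each $y_{j+1}$ among the top $\nu_n$ eigenvectors of $P_n(A^{(j+1)}\otimes I_n)P_n$ so as to nearly annihilate the $kM$ constraints; this is exactly the mechanism inside the paper's proof of Proposition \ref{prop:lln-subspace}, so you effectively bypass the subspace statement while reusing its engine. One point to make explicit when writing this up: the Parseval bound controls $|\langle y_{j+1},\delta_{i_1}\otimes f_{i_2}^{(j')}\rangle|$, and to convert this into near-orthogonality of $f_{i_1}^{(j+1)}$ and $f_{i_2}^{(j')}$ you divide by $\sqrt{\smash[b]{\lambda_{i_1}^{(j+1)}}}$, so coordinates where the singular value is small must be handled by noting that they contribute negligibly to $x(\alpha)$ anyway; this issue is shared with the paper's own argument and is not a gap specific to your route.
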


\begin{proof}
We shall prove a slightly stronger version of this result.
Let $\mathcal{P}_N$ be the subset of rank one selfadjoint projections of $End (V_n)$.
The inclusion $V_n\subset \C^k\otimes \C^n$ induces a non-unital inclusion of matrix algebras $End (V_n)\subset \M_k(\C)\otimes\M_n(\C)$.
Let $\hat K_{k,t}$ be the collection of self-adjoint matrices in $\M_k(\C)$ whose eigenvalues belong to $K_{k,t}$.
This is clearly a compact subset of $\M_k(\C)$, and it is of non-empty interior in the 
affine variety of trace one self-adjoint matrices.
(Indeed, for any $a\in\Delta_k$ which is not a multiple of the identity, $\langle1^k,a\rangle/k<\normt{a}$,
while when $a$ is a multiple of the identity, the inequality $\langle\lambda,a\rangle\le\normt{a}$
is trivially satisfied for all $\lambda\in\Delta_k$, so $\hat K_{k,t}$ contains a neighborhood of $1^k/k$).
If we can prove that for any compact subset  $\hat{\mathcal K}$ of the interior of $\hat K_{k,t}$, with probability one, for $n$ large enough, 
$\hat{\mathcal K}\subset  (id_k\otimes\trace_n)(\mathcal{P}_N)$,
then the theorem will be proved. One may think of this new problem as a quantum version of the original problem. 

So, let us concentrate on proving this fact. In order to simplify notation, let us denote $(id_k\otimes \trace_n)(\mathcal{P}_N)$
by $\hat K_{n,k,t}$. 
Since from any covering of a compact set by open sets one can extract a finite sub-covering, it is enough to prove that for any closed ball of center $x$ and radius $\e$ in the interior of $\hat K_{k,t}$, almost surely for $n$ large enough, $\overline{B(x,\e)}$ is contained in  the interior of $\hat K_{n,k,t}$. 

Given the closed ball $\overline{B(x,\e)}$, let 
$A_1,\ldots ,A_m$ 
be exposed points of $\hat K_{k,t}$ whose convex hull contains a neighborhood of $\overline{B(x,\e)}$. Such 
$A_1,\ldots, A_m$ always exist because the set of exposed points is dense in the set of extremal points, by a result of Straszewicz (\cite{rockafellar}, Theorem 18.6).

Let $y_i\in V_n$ be a norm one vector such that $A_i$ is the orthogonal rank one projection onto $\C y_i$.
For each $i\in \{1,\ldots ,m\}$, let $V'_{i}$ be a vector subspace 
of dimension $l$ (to be specified later) 
as in Proposition \ref{prop:lln-subspace}. Let $x_{1}\in V'_{1}$ be any norm $1$ vector and let $f_{i}^{(1)}$ be the vectors in $\C^{n}$ appearing 
in its singular value
decomposition. Using Proposition \ref{prop:lln-subspace} and making an appropriate by 
Gram-Schmidt procedure, since the dimension $l$ is large enough, we can find $x_{2}\in V'_{2}$ such that the 
vectors $f_{i}^{(2)} \in \C^{n}$ appearing in its Schmidt decomposition are all orthogonal to all $f_{i}^{(1)}$ $i\in \{1,\ldots ,k\}$.

By induction, we can find $x_{j}\in V'_{j}$ such that the vectors $f_{i}^{(j)} \in \C^{n}$ appearing in its Schmidt decomposition are all orthogonal to all $f_{i'}^{(j')}$, for all $i'\in \{1,\ldots ,k\}$ and $j'<j$.

For $n$ large enough, it follows from Lemma \ref{lem:continuity-hausdorff} (and from the fact that the use of Proposition \ref{prop:lln-subspace} ensures an appropriate convergence of the $e_{i}\in \C^{k}$ part of the Schmidt decomposition), that the collection of Schmidt vectors of a linear combination
$$\{\alpha_{1}x_{1}+\ldots +\alpha_{m}x_{m},\sum |\alpha_{i}|^{2}=m\}$$
contains $\overline{B(x,\e)}$.
\end{proof}

\begin{corollary}
In the metric space of compact subsets of $\Delta_k$ endowed with the Hausdorff distance, the distribution of $\partial K_{n,k,t}$ converges in probability to the Dirac mass 
on $\partial K_{k,t}$. 
\end{corollary}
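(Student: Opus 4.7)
The plan is to pass from Theorem \ref{thm:main} to an almost sure Hausdorff convergence of the sets $K_{n,k,t}$, then transfer this convergence to the boundaries, and finally observe that almost sure convergence implies convergence in probability. The key geometric input is that $K_{k,t}$ is a convex body with non-empty relative interior in $\Delta_k$, a fact already recorded in the proof of Theorem \ref{thm:lower-bound}: the center $1^k/k$ sits strictly inside every half-space $\scalar{\lambda}{a}\leq\normt{a}$ with $a\in\Delta_k$ not a scalar multiple of $1^k$, so $K_{k,t}$ contains a neighborhood of $1^k/k$ in $\Delta_k$.

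Fix $\e>0$. Applying the first part of Theorem \ref{thm:main} with $\mathcal O$ the open $\e$-neighborhood of $K_{k,t}$ in $\Delta_k$ gives $K_{n,k,t}\subset B(K_{k,t},\e)$ almost surely for $n$ large. For the reverse direction I would use the compact inner approximation $\mathcal K_\e=\{x\in K_{k,t}:\mathrm{dist}(x,\Delta_k\setminus K_{k,t})\geq\e\}$, a compact subset of the relative interior of $K_{k,t}$ for $\e$ small enough; the second part of Theorem \ref{thm:main} yields $\mathcal K_\e\subset K_{n,k,t}$ almost surely for $n$ large, and since $\mathcal K_\e\to K_{k,t}$ in Hausdorff distance as $\e\to 0$ (standard fact for convex sets with non-empty interior, via shrinking towards an interior point), one deduces $d_H(K_{n,k,t},K_{k,t})\to 0$ almost surely.

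To transfer this to boundary convergence, fix $\e>0$ small. Any $p\in\mathcal K_{2\e}$ has a whole $\e$-ball contained in $\mathcal K_\e\subset K_{n,k,t}$ for $n$ large, so $p$ lies in the topological interior of $K_{n,k,t}$. Combined with $K_{n,k,t}\subset B(K_{k,t},\e)$, this squeezes $\partial K_{n,k,t}\subset B(K_{k,t},\e)\setminus\mathcal K_{2\e}$, which is itself contained in an $O(\e)$-neighborhood of $\partial K_{k,t}$. For the reverse inclusion, given $x\in\partial K_{k,t}$ I would use the non-empty interior and convexity of $K_{k,t}$ to exhibit, at distance $O(\e)$ from $x$, a point $p\in\mathcal K_\e$ (hence $p\in K_{n,k,t}$) and a point $q$ with $\mathrm{dist}(q,K_{k,t})>\e$ (hence $q\notin K_{n,k,t}$, since $K_{n,k,t}\subset B(K_{k,t},\e)$); the segment $[p,q]\subset\Delta_k$ connects a point inside $K_{n,k,t}$ to a point outside, so it must cross $\partial K_{n,k,t}$, producing a boundary point within $O(\e)$ of $x$. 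Hence $d_H(\partial K_{n,k,t},\partial K_{k,t})\to 0$ almost surely.

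Almost sure convergence of random compact sets trivially implies convergence in probability, which yields the conclusion. The main obstacle is precisely this set-to-boundary transfer: Hausdorff convergence of general compact sets does not imply Hausdorff convergence of their boundaries, the standard counterexample being a disk approached by disks with a small drilled hole that keeps shrinking. What makes the argument work here is the two-sided (inner and outer) character of Theorem \ref{thm:main} together with the non-degenerate convex geometry of the limit $K_{k,t}$, which together rule out pathologies such as ``interior holes'' or ``thin fingers'' developing in $K_{n,k,t}$ and force $\partial K_{n,k,t}$ into a thin controlled shell around $\partial K_{k,t}$.
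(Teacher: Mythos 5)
Your proposal is correct and follows essentially the same route as the paper: almost sure Hausdorff convergence via the two inclusions of Theorem \ref{thm:main}, plus a segment/ray-crossing argument for the converse boundary inclusion. The one place you add genuine value is the first direction: the paper merely asserts that Theorem \ref{thm:main} implies $\partial K_{n,k,t}\subset B(\partial K_{k,t},\e)$, whereas you spell out why this holds via the sandwich $\mathcal K_{2\e}\subset\mathrm{int}\,K_{n,k,t}\subset K_{n,k,t}\subset B(K_{k,t},\e)$, correctly noting that set-Hausdorff convergence alone does not imply boundary-Hausdorff convergence and that the two-sided bounds are what rule out interior holes. One cosmetic remark: throughout, ``interior,'' ``ball,'' and ``boundary'' should be understood relative to the affine hyperplane $\Delta_k$, as you implicitly do; and the paper's version of your segment argument fixes a single interior point $x$ and lets $t_n$ be the maximal parameter with $x+t_n(y-x)\in K_{n,k,t}$, which is a slightly cleaner packaging of the same idea.
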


\begin{proof}
It is enough to prove that the result holds almost surely. It follows from Theorem \ref{thm:main} that for any $\varepsilon >0$, with probability one, for $n$ large enough, $\partial K_{n,k,t}$ is 
included in a $\varepsilon$-neighborhood of
$\partial K_{k,t}$.

Let us prove the converse inclusion.
Let $x$ be an element in the interior of $K_{k,t}$ and  $y$ be an element in $\partial K_{k,t}$. 
Our results so far imply that, for $n$ large enough, $x$ is an element of $K_{n,k,t}$. Let $t_n\in \mathbb{R}_+$ be the maximal number such that
$x+t_n(y-x)\in K_{n,k,t}$. By the upper bound in Theorem \ref{thm:main}, we have $\limsup_n t_n\leq 1$. The strict inequality $\liminf_n t_n <1$ would yield a contradiction for the lower bound in the same theorem, therefore $\lim_n t_n=1$. This implies that $y$ is in a $\varepsilon$-neighborhood of $\partial K_{n,k,t}$.

Since this result holds true for all boundary points $y\in \partial K_{k,t}$, the proof is complete. 
\end{proof}

\section{Properties of the limiting set $K_{k,t}$ and of its dual}\label{sec:dual-ball}

In this final section we derive geometric and convexity-related properties of the set $K_{k,t}$. Since this limiting set is described via the duality equation \eqref{eq:convex}, we start by investigating the unit ball of the $(t)$-norm. The reader might find it helpful to think as $K_{k,t}$ as the intersection of the dual of a ``ball'' formed by gluing two cones along their bases (a cylinder) with the probability simplex $\Delta_k$. The two vertices correspond to the upper and lower discs of the cylinder, 
the points on the circle along which the cones are glued correspond to vertical segments on the vertical wall of the cylinder, while the points of the two ``circles'' bordering the upper and lower discs of the cylinder are the images of segments starting from the two vertices of the cones.

\subsection{Preliminary observations}

Using the permutation invariance of the $\normt{\cdot}$ norm, it is clear that $K_{k,t}$ is invariant under permutation of coordinates. 
We start with the following lemma:

\begin{lemma}
Let $C$ be the interior of the Weyl chamber $\Delta_k^\downarrow$ of the probability simplex. Let $\lambda \in C$ be an exposed point of $K_{k,t}$ and $a \in \Delta_k$ a direction such that $H(a, t) \cap K_{k,t} = \{\lambda\}$. Then $a \in C$.
\end{lemma}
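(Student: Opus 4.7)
My plan is to argue by contradiction, assuming $a \notin C$, i.e. that there exist consecutive indices with $a_i \le a_{i+1}$, and to split into a strict inversion case and a tied case. The whole argument rides on two symmetry facts: the permutation invariance of the $(t)$-norm (Lemma \ref{lem:properties-t-norm-without-R-S-transform}), and its immediate dual consequence that $K_{k,t}$ itself is invariant under the action of $S_k$. The latter follows from the definition \eqref{eq:convex}: if $\mu \in K_{k,t}$ and $\sigma \in S_k$, then for every $b \in \Delta_k$ one has $\langle \sigma\mu, b\rangle = \langle \mu, \sigma^{-1}b\rangle \le \|\sigma^{-1}b\|_{(t)} = \|b\|_{(t)}$, so $\sigma\mu \in K_{k,t}$. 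In both cases below I will take $\sigma = (i,i+1)$ to be the transposition of the two suspect indices.

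In the strict inversion case $a_i < a_{i+1}$, the idea is to compare $\langle \lambda, a\rangle$ with $\langle \lambda, \sigma a\rangle$. Since $\lambda \in C$ satisfies $\lambda_i > \lambda_{i+1}$, a direct computation gives
$$\langle \lambda, \sigma a\rangle - \langle \lambda, a\rangle = (\lambda_i - \lambda_{i+1})(a_{i+1} - a_i) > 0.$$
Combined with $\|\sigma a\|_{(t)} = \|a\|_{(t)}$ and $\langle \lambda, a\rangle = \|a\|_{(t)}$, this forces $\langle \lambda, \sigma a\rangle > \|\sigma a\|_{(t)}$, which contradicts the membership $\lambda \in K_{k,t}$ tested against the direction $\sigma a \in \Delta_k$.

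In the tied case $a_i = a_{i+1}$, the transposition fixes $a$, so $\sigma a = a$. Now I use permutation invariance of $K_{k,t}$ rather than of the norm: $\sigma\lambda \in K_{k,t}$ and, crucially, $\sigma\lambda \neq \lambda$ because $\lambda_i > \lambda_{i+1}$; yet
$$\langle \sigma\lambda, a\rangle = \langle \lambda, \sigma a\rangle = \langle \lambda, a\rangle = \|a\|_{(t)},$$
which exhibits a second point in $H(a,t) \cap K_{k,t}$, contradicting the defining property $H(a,t) \cap K_{k,t} = \{\lambda\}$ of the exposed pair $(\lambda, a)$. The argument is essentially linear-algebraic, so I do not anticipate any real obstacle; the only thing one must be careful about is invoking the strictness of the inequalities defining the interior $C$ of the Weyl chamber, which is exactly what provides the contradiction in each case.
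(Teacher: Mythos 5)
Your proof is correct and follows essentially the same two-case strategy as the paper: first rule out an inversion in $a$ by permutation invariance of $\normt{\cdot}$ and the strict monotonicity of $\lambda$, then rule out a tie in $a$ by exhibiting $\sigma\lambda \neq \lambda$ as a second point of $H(a,t)\cap K_{k,t}$. The only cosmetic difference is that you work with a single adjacent transposition, whereas the paper compares $a$ with its fully sorted rearrangement in the first step.
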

\begin{proof}
First, let us show that $a \in \Delta_k^\downarrow = \bar C$. If this would not be the case, then there exists a direction $a' \in \Delta_k^\downarrow$, obtained by permuting the coordinates of $a$, such that
$$\scalar{\lambda}{a'} > \scalar{\lambda}{a}.$$ 
From this, we deduce that $\scalar{\lambda}{a'} > \scalar{\lambda}{a} = \normt{a} = \normt{a'}$, hence $\lambda \notin H^+(a', t)$, which contradicts the fact that $\lambda \in K_{k,t}$. 

Next, let us show that $a$ is not degenerate, 
i.e. it has distinct coordinates. Should $a$ have two equal coordinates, say the $i$-th and the $j$-th, let $\lambda' \in K_{k,t}$ be the vector obtained by permuting the $i$-th and the $j$-th coordinates in $\lambda$. As before, it follows that $\scalar{\lambda}{a} = \scalar{\lambda'}{a}$ and thus $\{\lambda, \lambda'\} \subset H(a, t) \cap K_{k,t}$ which is a contradiction.
\end{proof}

The following proposition shows that, in a certain sense, the $t$-norm interpolates between $\ell^{1}$ and $\ell^{\infty}$ norms when $t\in (0,1]$ and $x\in \mathbb R^k_+.$

\begin{proposition} 
For any $x \in \R^k$, $\|x\|_{(t=1)} = \|x\|_\iy$ and $\lim_{t \to 0^+} \normt{x} = k^{-1} | \sum_{i=1}^k x_i |$.
\end{proposition}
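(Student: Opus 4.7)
The equality $\|x\|_{(1)} = \|x\|_\infty$ is essentially trivial from Definition \ref{def:t-norm}: at $t=1$ the projection $p_t$ of rank $t$ is the unit of the ambient $\mathrm{II}_1$ factor $\mathcal A$, so $p_1 x p_1 = x$ and $\|p_1 x p_1\|_\infty = \|x\|_\infty$. (This also recovers the $t=1$ boundary case of Lemma \ref{lem:properties-t-norm-without-R-S-transform}(4).) The real content is the $t \to 0^+$ limit, for which I would set $m = \phi(x) = k^{-1}\sum_i x_i$ and $\sigma^2 = \mathrm{VAR}(\mu_x)$ and prove the two inequalities $\liminf_{t \to 0^+}\normt{x} \geq |m|$ and $\limsup_{t \to 0^+}\normt{x} \leq |m|$ separately.

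The lower bound is soft and uniform in $t$. The reduced algebra $p_t \mathcal A p_t$ carries the faithful tracial state $\phi_t = t^{-1}\phi\vert_{p_t \mathcal A p_t}$, and for every self-adjoint $y \in p_t\mathcal A p_t$ one has $|\phi_t(y)| \leq \|y\|_\infty$. Applying this to $y = p_t x p_t$ and using the freeness of $x$ and $p_t$ gives $\phi_t(p_t x p_t) = t^{-1}\phi(p_t)\phi(x) = m$, hence $\normt{x} \geq |m|$ for \emph{every} $t \in (0,1]$.

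For the upper bound, I would rely on the complex-analytic description from Proposition \ref{53}, invoking the symmetry $\normt{-x}=\normt{x}$ to reduce to $m \geq 0$. Writing the Nevanlinna representation $F_{\mu_x}(w) = w - m + \int (u-w)^{-1}\,d\rho(u)$ with $\rho$ compactly supported and $\rho(\R) = \sigma^2$, the critical point $w = w(t)$ of Proposition \ref{53} either solves the equation $F_{\mu_x}(w) = 0$ (only finitely many solutions, bounded in $w$) or $F'_{\mu_x}(w) = 1/(1-t)$, which rewrites as $\int(u-w)^{-2}\,d\rho(u) = t/(1-t)$. For $t$ small the second branch dominates and forces $w(t) \to +\infty$; a direct expansion using the compactness of $\mathrm{supp}(\rho)$ yields $w(t) \sim \sigma\sqrt{(1-t)/t}$. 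Substituting into the formula of Proposition \ref{53},
\[
\normt{x} \;=\; w(t) + (t-1)F_{\mu_x}(w(t)) \;=\; t\,w(t) + (1-t)m - (1-t)\int \frac{d\rho(u)}{u-w(t)},
\]
one sees that $t\,w(t) = O(\sqrt t)$, the final integral term is $O(1/w(t)) = O(\sqrt t)$, and the middle term tends to $m$, giving $\limsup_{t\to 0^+}\normt{x} \leq m = |m|$. When $m=0$ both edges of $\mu_x^{\boxplus 1/t}$ are of order $\sqrt{1/t}$, so the same computation produces $\normt{x} = O(\sqrt t) \to 0 = |m|$.

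The only step with nontrivial work is the asymptotic $w(t) \sim \sigma\sqrt{(1-t)/t}$ together with the uniform control of the tail integrals $\int (u-w)^{-k}\,d\rho(u)$ as $w \to \infty$. These come down to a routine Taylor expansion justified by the compactness of $\mathrm{supp}(\rho) \subseteq [-[\mu_x],[\mu_x]]$ and are of exactly the same flavor as the estimates already carried out in the proof of Proposition \ref{53} and of Proposition \ref{prop:t-norm-cont}.
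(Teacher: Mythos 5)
Your proof is correct, but it takes a genuinely different route from the paper's. The paper disposes of the $t\to 0^+$ limit in a few lines by invoking the Bercovici--Voiculescu superconvergence theorem for the free central limit theorem: setting $t=1/N$ and viewing $\mu_x^{\boxplus N}$ as the law of a sum of $N$ free copies of a variable with distribution $\mu_x$, the rescaled and recentered measure converges to a semicircle \emph{together with the endpoints of the support}; undoing the rescaling shows that the endpoints of $\operatorname{supp}\bigl(t\,\mu_x^{\boxplus 1/t}\bigr)$ both collapse to $m=\phi(x)$. (Strictly speaking this is stated only along $t=1/N$, and one completes it by the monotonicity of $t\mapsto\normt{x}$ from Proposition~\ref{53}, or by Proposition~\ref{prop:t-norm-cont}.) You instead re-derive the needed edge asymptotics from scratch, using the explicit formula of Proposition~\ref{53} and the Nevanlinna representation of $F_{\mu_x}$, locating the critical point $w(t)\sim\sigma\sqrt{(1-t)/t}$ and expanding the tail integrals; this is more computational but fully self-contained and handles all $t\to 0^+$ directly. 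Your lower-bound argument --- that the reduced trace $\phi_t$ is a state with $\phi_t(p_txp_t)=\phi(x)$ by freeness, hence $\normt{x}\ge|\phi(x)|$ for \emph{every} $t\in(0,1]$ --- does not appear in the paper and is a nice, soft observation that makes the $\liminf$ direction trivial. One small point worth adding for completeness: when $x\in\R\cdot 1^k$ (so $\rho=0$ and $\sigma^2=0$) the equation $\int(u-w)^{-2}\,d\rho(u)=t/(1-t)$ has no solution, but there $F_{\mu_x}(w)=w-m$, the only zero is $w=m$, and $\normt{x}=|m|=\|x\|_\infty$ for every $t$, so the limit still holds (trivially).
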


\begin{proof}
The first statement is just a re-phrasing of the definition of $\normt{x}$ at $t=1$. The second is a re-phrasing of the free law of large numbers: 
as we know from the superconvergence result of Bercovici and 
Voiculescu \cite{BV}, 
if $X_1,X_2,\ldots$ are free i.d. random variables, centered at $a$ and
with variance $\sigma^2$, then 
$$
\mu_\frac{X_1-a+X_2-a+\cdots+X_N-a}{\sqrt{N}}=
\mu_{\frac{X_1+X_2+\cdots+X_N}{\sqrt{N}}-a\sqrt{N}}\to\frac{1}{2\pi\sigma^2}
\sqrt{4\sigma^2 - u^2}\ind_{(-2\sigma, 2\sigma)}(u)\,\d{u}
$$
in the sense that the ends of the supports of $\mu_{\frac{X_1+X_2+\cdots+X_N}{\sqrt{N}}-a\sqrt{N}}$ converge to $\pm 2 \sigma$.
Taking $t=1/N$, $N \to \iy$, contraction by $1/N$ of $X_1+\cdots+X_N$ corresponds to taking
$\mu_{\frac{X_1-a+X_2-a+\cdots+X_N-a}{N}+a}=
\mu_{\frac{1}{\sqrt{N}}\cdot(\frac{X_1+X_2+\cdots+X_N}{\sqrt{N}}-a\sqrt{N})+a}=\mu_{\frac{1}{\sqrt{N}}\cdot(\frac{X_1+X_2+\cdots+X_N}{\sqrt{N}}-a\sqrt{N})}\boxplus\delta_a$. Then these measures converge to 
$\delta_a$ in the sense that the ends of the support converge to $a$.
We obtain our result by taking $X_1$ to be distributed according to $\mu_x$, in which case $a=k^{-1} \sum_{i=1}^k x_i$.
\end{proof}

In \cite{collins-nechita-2}, using similar ideas, it was shown that the set $K_{k,t}$ is included in the convex polytope $L_{k,t}$ defined by the following sequence of linear inequalities:

\begin{equation}
	\scalar{x}{y_j} \leq \normt{y_j} \quad \text{ where } y_j = 1^j0^{k-j} \text{ for } j=1,2, \ldots, k.
\end{equation}

This polytope was shown to be closely related to the majorization relation ``$\prec$'' \cite{bhatia}. Actually, in \cite{collins-nechita-2}, it was shown that $L_{k,t} = \{x \in \Delta_k \; | \; x \prec \beta^{(t)} \}$ where 
\begin{equation}
\label{def-beta-t}
\beta^{(t)}_j = \normt{1^j0^{k-j}} - \normt{1^{j-1}0^{k-j+1}}, \quad \forall \; 1 \leq j \leq k.
\end{equation}
However, the inclusion $K_{k,t} \subset L_{k,t}$ is strict, since $K_{k,t}$ is defined by a larger set of inequalities,
and most of the inequalities are not redundant, as it is shown in the next section.

\subsection{Study of the geometry of $K_{k,t}$ and of the unit ball of the $(t)$-norm}

Next we shall remind the reader of a few elementary convex analysis results. First, the correspondence $\mathbb R^k\ni u\mapsto H_u=\{x:\langle u,x\rangle=1\}$ is a bijection between vectors and hyperplanes in $\mathbb R^k$. If $A$ is a compact convex set whose interior contains the origin of $\mathbb R^k$, we shall denote by $A^*$ its {\em polar dual} (or, for short, dual), i.e. $A^*=\{x\in\mathbb R^k:\langle x,a\rangle\le1\textrm{ for all }a\in A\}$. 
An {\em exposed face} of $A$ is a set $A\cap H_u$ for some hyperplane $H_u$ with the property that $\langle a,u\rangle\leq1$ for all $a\in A$. For any given exposed face $B$ of $A$, we can define the {\em polar
face mapping of $A$}
$$
\varphi(B)=\{x\in A^*:\langle b,x\rangle=1\textrm{ for }b\in B\}.
$$
Then \cite[Theorem 2.8.6]{webster} $\varphi$ is an inclusion reversing bijection. Moreover, if $b_0$ belongs to the relative interior of $B$, then $\varphi(B)=\{x\in A^*:\langle b_0,x\rangle=1\}$ \cite[Exercise 2.8.4]{webster}. We shall study this correspondence in more detail for the
case when $A$ is the unit ball of a norm (eventually of $\normt{\cdot}
$). 

We note that for a given arbitrary norm $\|\cdot\|$, the boundary of the unit ball $\partial\{x\in\mathbb R^k\colon\|x\|=1\}$ is a $k-1$-dimensional
topological manifold, which admits projections as atlases. Indeed, let
$x_0\in\mathbb R$ so that $\|x_0\|=1$. We claim that the projection 
onto $\{x_0\}^\perp$ of the set $\{x\in\mathbb R^k\colon\|x\|=1,
\norm{x-\scalar{x}{x_0} x_0}<1, \scalar{x}{x_0} > 0\}$ is a continuous bijection with continuous inverse.
First, continuity is clear. Next, pick $b\in\{x_0\}^\perp$ with $\|b\|<1$, and consider $b+tx_0$, $t\in\mathbb R$. Then $\|b+tx_0\|\ge|\|b\|-|t|\|x_0\||$ so there must be points $t$ so that 
$\|b+tx_0\|=1$. Convexity guarantees that there are either two such
points, or exactly one continuum of them. The second possibility is
easily discarded, since there must be both positive and negative such
numbers, and at $t=0$ the inequality is strict. Also, only one
of those two points satisfies $\langle b+tx_0,x_0\rangle>0$, as
$b\perp x_0$. Thus we have identified our bijection. Clearly a 
proper continuous bijection is a homeomorphism, so our claim is proved.

Let us remind the reader the notion of gradients and subgradients. First, for a convex function $f$ we define the \emph{one-sided} directional derivatives of $f$ at $x$ relative to $y$ by
$$
f'_+(x;y)=\lim_{\lambda\to0^+}\frac{f(x+\lambda y)-f(x)}{\lambda},\quad
f'_-(x;y)=\lim_{\lambda\to0^-}\frac{f(x+\lambda y)-f(x)}{\lambda}.
$$
It is easy to observe that $-f'_+(x;-y)=f'_-(x;y)$, so that the 
directional derivative at $x$ in the direction $y$ exists if and only 
if the one-sided directional derivatives exist and satisfy the relation $f'_+(x;y)=
-f'_+(x;-y).$ The inequality $f'_+(x;y)\ge f'_-(x;y)$ holds, 
and generally $f'_+(x;\cdot)$ is a positively homogeneous convex 
function on $\mathbb R^{k}$ for any $x$. If $f'(x;\cdot)$ exists, then 
it is linear \cite[Theorem 5.5.2]{webster}. 

The gradient of $f$ at $x$ (if existing) is defined as 
$
\nabla f(x)=\left(\partial_1f(x),\dots,\partial_kf(x)\right),
$
where we use the short-hand notation $\partial_jf=\frac{\partial f}{\partial x_j}$. This means 
$$
\langle\nabla f(x),y\rangle=\sum_{j=1}^ky_j\partial_jf(x)=f'(x;y).
$$
We observe that, generally, for a norm we have 
$\left|\frac{\|x+\lambda y\|-\|x\|}{\lambda}\right|
\leq\|y\|$ so (by a slight abuse of 
notation) we can write for our specific case $f'(x;y)=[f'_-(x;y),
f'_+(x;y)]\subseteq[-f(y),f(y)]=[-\norm{y},\norm{y}].$

A subgradient of a convex function $f$ at a point $x$ is a vector 
$x^*\in\mathbb R^k$ so that 
$$
f(y)-f(x)\ge\langle x^*,y-x\rangle,\quad\forall y\in\mathbb R^k.
$$
(For our case, $\|y\|-\|x\|\ge\langle x^*,y-x\rangle$.) Geometrically, 
this means that $h(y)=f(x)+\langle x^*,y-x\rangle$ is a nonvertical
supporting hyperplane of the epigraph of $f$ at the point $(x,f(x))$
\cite[Section 23]{rockafellar}. The set of all subgradients of $f$ 
at $x$ is called the {\em subdifferential} of $f$ at $x$ and is 
denoted by $\partial f(x)$. If $f$ is differentiable, then
$x^*$ is unique and $x^*=\nabla f(x),$ and, conversely, if $\partial
f(x)$ contains exactly one point, then $f$ is differentiable at $x$
\cite[Theorem 25.1]{rockafellar}.

In addition, if the correspondence $x\mapsto\|x\|$ is differentiable 
around a point $a\neq0$ then the atlas described above is differentiable around $a$. Indeed, let us assume $x\mapsto\|x\|$ is 
differentiable at $a$. It is clear that the derivative of this map in
the direction $a$ at $a$ equals $\|a\|$, so $a$ is not a singular 
point. For $x\in A$ close enough to $a$ its image in $\{a\}^\perp$ is 
$b=x-\frac{\langle x,a\rangle}{\langle a,a\rangle}a$. So the 
correspondence from $b$ to $x$ is given by an implicit equation:
$x=b+ta$, $t\ge0$. Then we write the implicit function 
equation for $\mathcal F(b,t)=\|b+ta\|$ as $\mathcal F(b,\mathfrak k(b))=1$. As we know of the existence of the solution $\mathfrak k(b)$,
we only need to verify differentiability: $\partial_t\mathcal F(b,t)=
\langle\nabla\|(b+ta)\|,a\rangle$ in $t=\mathfrak k(b)$ is 
well-defined by hypothesis and nonzero by the condition that $b+\mathfrak k(b)a$ is close to $a$ (we know that $\langle\nabla\|a\|,a
\rangle\geq\|a\|=1$ from the subgradient inequality above evaluated in
$x=a$ and $y=0$). 

The above considerations will allow us to to perform a geometric analysis of the ball of the $(t)$-norm and its dual.

Let us now analyze the correspondence between faces in terms of their 
dimensions. 
The general result which is of interest for us will be stated in the following remark:

\begin{remark}\label{smoothness}
Assume that $g(x)$ is a norm so that $g^{-1}(1)$ is the real part of an analytic set in the sense of \cite{chirka}. 
Denote by $A=\{x\in\mathbb R^k\colon g(x)\leq1\}$, and $A^*$ the unit ball in 
the dual norm. We define $\varphi$ to be the polar face map from the 
faces of $A$ to the faces of $A^*$. Then
\begin{enumerate}
\item If $x\in\partial A$ is a point belonging to the relative interior of an exposed face $B$ of $A$ so that $\partial A$ is a smooth manifold
around $x$, then $\varphi(B)$ is a point in $\partial A^*$;
\item If $x\in\partial A$ is a point belonging to the relative interior 
of an exposed face $B$ of $A$ where there are $j\in\{1,\dots,k-1\}$ 
independent 
directions in which $g$ is not differentiable, then $\varphi(B)$ has
dimension $j$.
\end{enumerate}
\end{remark}

In particular, an isolated ``vertex'' of such a ball, where the norm function is not differentiable in any direction different from the vertex, 
corresponds to a piece of hyperplane having nonempty $k-1$-dimensional interior, an ``edge'' - a segment included in the $t$-sphere 
determining only one direction of differentiability - corresponds via $\varphi$ to a $k-2$-dimensional piece and so on. 
The case important for us is when the unit ball
is an analytic set (in the sense of \cite{chirka}), so its points
of non-smoothness are well understood in terms of dimension.

\begin{proof}

Fix a point $x_0$ with $g(x_0)=1$ and let $B$ be the face in whose 
relative interior $x_0$ lives. 
Recall that $\varphi(B)=\{x\in\mathbb R^k
\colon\langle x,x_0\rangle=1,\langle x,a\rangle\le1\forall a\in A\}=
\{x\in\mathbb R^k\colon\langle x,x_0\rangle=g(x_0),\langle x,a\rangle\le g(a)\forall a\in\mathbb R^k\}.$ Subtracting the two defining 
relations $g(a)\ge\langle x,a\rangle, g(x_0)=\langle x,x_0\rangle$
from each other gives $g(a)-g(x_0)\ge\langle x,a-x_0\rangle$
This indicates that $x\in\varphi(B)\implies x\in\partial g(x_0)$, i.e.
$$
\varphi(B)\subseteq\partial g(x_0).
$$
In particular, if $g$ is differentiable in $x_0$, then $\varphi(B)$
contains exactly one point, as claimed in (1).

We note however that evaluating $g(a)-g(x_0)\ge\langle x,a-x_0\rangle$
in $a=tx_0$ gives $(t-1)g(x_0)\ge(t-1)\langle x,x_0\rangle.$
In particular, when $t=0,$ we obtain $-g(x_0)\ge-\langle x,x_0\rangle$,
i.e. $g(x_0)\le\langle x,x_0\rangle$, and when $t=2$ we obtain 
$g(x_0)\ge\langle x,x_0\rangle$. Thus, $g(x_0)=\langle x,x_0\rangle$. 
Also, for $a=b+x_0$ we have 
$g(b)\ge g(b+x_0)-g(x_0)\ge\langle x,b\rangle$ for all $b\in\mathbb R^k
$. So $\partial g(x_0)\subseteq\varphi(B).$ Thus,
\begin{equation}\label{subgradient-face}
\varphi(B)=\partial g(x_0)\quad\forall x_0\textrm{ in the relative 
interior of the exposed face }B.
\end{equation}

Generally, from the definition of $\varphi(B)$ it follows that
$x\in\varphi(B)$
if and only if $a\mapsto{g(a)}-{\langle x,a\rangle}$ reaches a 
global minimum at $a=x_0$ on all of $\mathbb R^k$.
In particular, we look at $a=x_0+\lambda y_0$. Differentiation with
respect to $\lambda$ to left and right of zero gives $g'_\mp(x_0;
y_0)-\langle x,y_0\rangle$. As $x_0$ is a point of minimum, it is 
clear
that $\lambda\mapsto{g(x_0+\lambda y_0)}-{\langle x,x_0+\lambda y_0
\rangle}$ must decrease as $\lambda$ grows to zero, and then increase
after $\lambda$ passed the point zero. So the derivative must 
either be zero or change sign at $\lambda=0$. So $g'_-(x_0;
y_0)-\langle x,y_0\rangle\le0,g'_+(x_0;
y_0)-\langle x,y_0\rangle\ge0$, i.e. $\langle x,y_0\rangle\in
[g'_-(x_0;y_0),g'_+(x_0;y_0)]$. As $g_\pm(x_0;\cdot)$ is positively 
homogeneous, we may assume $g(y_0)=1$. Thus, we can write as a 
condition for $x\in\varphi(B)$
$$
x\in\varphi(B)\implies \langle x,y_0\rangle\in[g'_-(x_0;y_0),
g'_+(x_0;y_0)]\textrm{ for all } y_0\in\mathbb R^k,g(y_0)=1,
$$
which means that 
\begin{equation}\label{condition}
\partial g(x_0)\subseteq\{x\in\mathbb R^k\colon
g'_-(x_0;y_0)\le\langle x,y_0\rangle\le g'_+(x_0;y_0)\forall y_0\in
\partial A\}.
\end{equation}

Let us note that if there are $l$ 
linearly independent directions $y_1,\dots,y_l$ 
in $\{x_0\}^\perp$ so that $g$ is
differentiable in all these directions at $x_0$, then for any
vector $z\in\mathrm{Span}\{y_1,
\dots,y_l,x_0\}\subset\mathbb R^k$, $g'(x_0;z)$ exists. Indeed, 
the function $\mathrm{Span}\{y_1,
\dots,y_l,x_0\}\ni z\mapsto g(x_0+z)$ is still convex. The partial 
derivatives of this function in zero, $\lim_{t\to0}\frac{g(x_0+ty_i)-
g(x_0)}{t}$, $i\in\{1,2,\dots,l\}$
and $\lim_{t\to0}\frac{g(x_0+tx_0)-g(x_0)}{t}$ all exist,
so the function $z\mapsto g'_+(x_0;z)$ satisfies 
$ -g'_+(x_0;z)= g'_+(x_0;-z)$ for $z\in\{x_0,y_1,\dots,y_l\}$. Since 
$z\mapsto g'_+(x_0;z)$ is positively homogeneous and convex 
\cite[Theorem 23.1]{rockafellar}, it follows from 
\cite[Theorem 4.8]{rockafellar} that 
$z\mapsto g'_+(x_0;z)$ is in fact linear on $\mathrm{Span}\{x_0,y_1,\dots,y_l\}.$ This, according to \cite[Theorem 25.2]{rockafellar},
implies that $g'_+(x_0;\cdot)$ is differentiable on 
$\mathrm{Span}\{x_0,y_1,\dots,y_l\}.$
 Thus, $g'(x_0;z)=\lim_{t\to0}\frac{g(x_0+tz)-
g(x_0)}{t}$ exists for any $z\in\mathrm{Span}\{y_1,
\dots,y_l,x_0\}.$ This indicates that whenever $z\in\mathrm{Span}\{y_1,
\dots,y_l,x_0\}$ and $x\in\varphi(B)$, 
$\langle x,z\rangle=g'(x_0;z)$. This gives us a system of 
$l+1$ equations with $k$ unknowns, so it specifies for $x$ exactly
$l+1$ degrees of freedom. So $\partial g(x_0)$ is contained in an 
affine variety of dimension at most $k-(l+1)$.

To complete the proof we only need to show that for any of the {\em
other} $k-(l+1)$ directions, $x\in\varphi(B)$ is free to move
for a nonzero distance, i.e. that $\varphi(B)$ is open in the $k-(l+1)$-dimensional affine variety in which it lives. 
First of all, we must note that for any $w\not
\in\mathrm{Span}\{y_1,\dots,y_l,x_0\}$, $g'(x_0;w)$ does not exist.
Indeed, by \cite[Theorem 4.8]{rockafellar}, any positively homogeneous
convex function $f$ is linear on a subspace $L$ if and only if $f(-x)=
-f(x)$ for all $x\in L$, and this condition is true if merely $f(-b_i)=
-f(b_i)$ for all $b_1,\dots,b_m$ forming a basis (not necessarily
orthogonal!) of $L$. Applying this as above to the right derivative 
$g'_+(x_0;\cdot)$ we conclude that if $g_+'(x_0;\cdot) $ is 
differentiable on the higher dimensional space
$\mathrm{Span}\{x_0,y_1,\dots,y_l,w\}$, a contradiction.
We know \cite[Section 23]{rockafellar} that $\partial g(x_0)$
is closed and convex, so assume that $x$ is in the relative interior 
of $\partial g(x_0)$. 
Choose any direction $z\perp\mathrm{Span}\{x_0,y_1,\dots,y_l\}$.
We claim that for $|t|$ small enough, $x+tz\in\partial g(x_0)$.
Indeed, this is equivalent to the statement that 
$g(x_0+b)-g(x_0)-\langle x+tz,b\rangle\ge0$ for all $b\in\mathbb R^k$. 
As $\Phi:b\mapsto g(x_0+b)-g(x_0)-\langle x+tz,b\rangle$ takes the
value zero in $b=0$, we would like to show that $b=0$ is a point of
global minimum. In particular, we shall take the real function 
$\mathbb R\ni\lambda\mapsto\Phi(\lambda b)$ and we shall decompose
$b=b_s+b_p$ with $b_s\in\mathrm{Span}\{x_0,y_1,\dots,y_l\}$ and
$b_p\perp\mathrm{Span}\{x_0,y_1,\dots,y_l\}$, and, in particular,
$\langle b_s,z\rangle=0$.
We have
$$
{\Phi(\lambda b)}=g(x_0+\lambda b)-g(x_0)
-\lambda\langle x+tz,b\rangle=g(x_0+\lambda b)-g(x_0)-\lambda
\langle x,b\rangle-t\lambda\langle z,b_p\rangle.
$$
Differentiating in $\lambda$ gives 
$g_\pm'(x_0+\lambda b)-\langle x,b\rangle-t\langle z,b_p\rangle.$ (We
have used $\pm$ to denote that we consider, in the points where the
derivative does not exist, the right and left derivatives; it is
known that, $\lambda\mapsto g(x_0+\lambda b)$ being convex, these
two exist and $g_-'(x_0+\lambda b)\le g_+'(x_0+\lambda b)$.)
Thus, as function of $\lambda$, we can state that
$g_\pm'(x_0+\lambda b)-\langle x,b\rangle-t\langle z,b_p\rangle$
is strictly increasing, with jump increases at the points of 
non-differentiability. In zero, by hypothesis 
$g_-(x_0;b)<g_+(x_0;b)$
and $g_-(x_0;b)\le\langle x,b\rangle\le g_+(x_0;b)$ for all
$b\in\mathbb R^k$ (see \eqref{condition}). As $x$ is in the
relative interior of $\varphi(B)$, we have 
$g_-(x_0;b)<\langle x,b\rangle<g_+(x_0;b)$ for all
$b\in\mathbb R^k$. We assume now that $g(b)=1$.
Then clearly for $|t|$ small enough, 
$g_-(x_0;b)<\langle x,b\rangle+t\langle z,b_p\rangle<g_+(x_0;b)$
holds. Since both $g_\pm'(x_0;\cdot)$ are positively
homogeneous, this is equivalent to 
$g_-(x_0;hb)<\langle x,hb\rangle+t\langle z,hb_p\rangle<g_+(x_0;hb)$
for all $h>0$. Thus, 
$\lambda\mapsto
g_\pm'(x_0+\lambda b)-\langle x,b\rangle-t\langle z,b_p\rangle$
changes sign exactly at $\lambda=0$. This proves our statement.
\end{proof}

We shall apply these simple observations in a corollary to the following theorem, which describes the unit ball of the norm $\normt{\cdot}$ (for a picture in the case $k=2$, see Figure \ref{fig:k2}).

\begin{theorem}\label{convex-norm}
The boundary of the unit ball in the norm $(t),$ denoted $S_t$, is locally analytic. It can be expressed as the union of two intersecting cones, one with vertex at $1^k$, and the other with vertex at $(-1)^k$. Its points of non-analyticity are as follows:
\begin{itemize}
\item When $1-\frac{j}k< t<1-\frac{j-1}k$, then $S_t$ contains 
exposed faces of maximum dimension $k-j$;
\item In particular, when $t<\frac1k$, then $S_t$ contains no other segments except the ones connecting each point of $S_t$ either with 
$1^k$ or with $(-1)^k$, while if $\frac{k-1}k\le t$, then $S_t$ is 
simply the boundary of the unit ball in the $\ell^\infty$ norm on 
$\mathbb R^k$.
\end{itemize}
If $\normt{x}=t\min{\mathrm{supp}(\mu_x^{\boxplus 1/t})}$, then $x$ belongs to the cone with vertex at $(-1)^k$, and if $\normt{x}=t\max{\mathrm{supp}(\mu_x^{\boxplus 1/t})}$, then $x$ belongs to the cone with vertex at $1^k$.
Moreover, if $t<\frac1k$, then $\|\nabla\normt{b}\|_1=1$ for all 
 $b\in\R^k_+$, $b\not\in\mathbb R\cdot1^k$.
\end{theorem}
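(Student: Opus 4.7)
The plan is to combine the algebraic description of $\normt{\cdot}$ from Proposition~\ref{53}(1) with the atom formula~\eqref{atoms} and the polar-face correspondence of Remark~\ref{smoothness}, and to extract the cone decomposition from the additivity identity in Lemma~\ref{lem:properties-t-norm-without-R-S-transform}(2).

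\emph{Cone decomposition.} I would first observe that if $x\in S_t$ realizes $\normt{x}=1$ at the upper bound of $\mathrm{supp}(\mu_x^{\boxplus 1/t})$, then Lemma~\ref{lem:properties-t-norm-without-R-S-transform}(2) gives $\normt{x+s\cdot 1^k}=1+s$ for all $s\ge 0$, so the curve $s\mapsto(x+s\cdot 1^k)/(1+s)$ stays in $S_t$ and converges to $1^k$; thus the entire segment from $x$ to $1^k$ lies in $S_t$. Symmetrically, attainment at the lower bound puts $x$ on a segment ending at $-1^k$. Every $x\in S_t$ falls in one of the two cases, which simultaneously exhibits $S_t$ as the union of the two cones with apices $\pm 1^k$ and identifies which cone contains a given $x$, establishing the last assertion of the theorem.

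\emph{Local analyticity and face dimensions.} Since $\mu_x=k^{-1}\sum_i\delta_{x_i}$ is atomic, $F_{\mu_x}$ is rational with coefficients polynomial in the entries of $x$; Proposition~\ref{53}(1) then writes $\normt{x}/t=w_x+(1-1/t)F_{\mu_x}(w_x)$ with $w_x$ algebraic in $x$, and the implicit function theorem gives real-analyticity of $\normt{\cdot}$ away from the loci where several coordinates of $x$ coincide, so $S_t$ is locally an analytic set. To count face dimensions I would apply the atom formula~\eqref{atoms}: $\mu_x^{\boxplus 1/t}$ has an atom at $a$ iff strictly more than $k(1-t)$ of the $x_i$ equal $ta$. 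Hence, for $t\in(1-j/k,1-(j-1)/k)$, atoms appear exactly when at least $j$ coordinates coincide, and the corresponding locus in $S_t$ (with $j$ tied coordinates and $k-j$ free ones) is a $(k-j)$-dimensional real-analytic subvariety along which $\normt{\cdot}$ admits $j-1$ independent directions of non-differentiability, namely those splitting the tied cluster. By Remark~\ref{smoothness} the polar face map then produces an exposed face of $S_t$ of maximum dimension $k-j$. The main obstacle I expect is the careful verification that exactly $j-1$ (and not more or fewer) non-differentiable directions arise at such $x$; I would handle it using Proposition~\ref{prop:boxtimes-cont} to control the top of $\mathrm{supp}(\mu_x^{\boxplus 1/t})$ under perturbations that respect, resp.\ break, the coincidence, together with the observation that tie-preserving perturbations produce only a smooth translation of the atom.

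\emph{Extremes and the $\ell^1$-gradient identity.} The particular cases follow at once: when $t<1/k$ one needs $j\ge k$, so atoms arise only for $x\in\R\cdot 1^k$ and $S_t$ contains no segments beyond the cone rays from $\pm 1^k$; when $t\ge(k-1)/k$, Lemma~\ref{lem:properties-t-norm-without-R-S-transform}(4) and the remark following it give $\normt{\cdot}=\|\cdot\|_\infty$, whose unit sphere is the boundary of the $\ell^\infty$-ball in $\R^k$. For the gradient statement, fix $t<1/k$ and $b\in\R^k_+\setminus\R\cdot 1^k$: the previous paragraph shows $\mu_b^{\boxplus 1/t}$ has no atom, so $\normt{\cdot}$ is analytic at $b$, and because $b\ge 0$ the norm $\normt{b}$ is attained at the upper bound of the support. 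Differentiating the cone identity $\normt{b+s\cdot 1^k}=\normt{b}+s$ at $s=0$ yields $\sum_i\partial_i\normt{b}=1$. Since the basis vector $e_i$ corresponds to a positive element of the algebra, adding $\varepsilon e_i$ to $b$ increases $p_tbp_t$ as a positive operator and hence $\normt{b}=\|p_tbp_t\|_\infty$ is non-decreasing in each coordinate on $\R^k_+$; therefore $\partial_i\normt{b}\ge 0$ for every $i$, and consequently $\|\nabla\normt{b}\|_1=\sum_i\partial_i\normt{b}=1$.
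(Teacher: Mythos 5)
Your architecture matches the paper's for most of the statement — the cone decomposition via the translation identity of Lemma \ref{lem:properties-t-norm-without-R-S-transform}(2), local analyticity via the implicit function theorem applied to the critical-point equation of Proposition \ref{53}(1), the atom count from \eqref{atoms} producing the flat $(k-j)$-dimensional loci $(b_1,\dots,b_{k-j},1,\dots,1)$, and your derivation of $\|\nabla\normt{b}\|_1=1$ by differentiating $\normt{b+s1^k}=\normt{b}+s$ and using coordinatewise monotonicity of $b\mapsto\|p_tbp_t\|$ on $\R^k_+$ is in fact cleaner than the paper's explicit computation of $\partial_{b_j}F$. However, there is one genuine gap: for $t<\frac1k$ you deduce ``$S_t$ contains no segments beyond the cone rays from $\pm 1^k$'' from the absence of atoms. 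Absence of atoms gives smoothness of $S_t$ off $\R\cdot 1^k$, but a smooth analytic hypersurface can perfectly well contain segments (a cylinder does), so strict convexity in the transverse directions does not follow. This is the part the paper itself calls ``the more difficult part'': one must suppose a segment $[b,c]\subset S_t$ with $b-c\notin\R\cdot 1^k$, use the fact that $\{x\in\Rnc^k:\max\mathrm{supp}(\mu_x^{\boxplus 1/t})=1\}$ is (the real part of) an analytic set to propagate the identity $\max\mathrm{supp}(\mu_{ub+(1-u)c}^{\boxplus 1/t})=1/t$ to \emph{all} $u\in\R$, and then derive a contradiction by letting $u\to\pm\infty$: either the mean of $\mu_{ub+(1-u)c}$ (hence of its $\boxplus 1/t$ power) blows up, or, if $\sum b_j=\sum c_j$, its variance does, which is incompatible with a support bounded above by a constant while the first moment stays fixed. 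Without some such argument the assertion about $t<\frac1k$ is unproved.

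A secondary, fixable confusion: you invoke Remark \ref{smoothness} to ``produce an exposed face of $S_t$ of maximum dimension $k-j$,'' but the polar face map of that remark sends non-smoothness of $S_t=\partial A$ to faces of the \emph{dual} ball $A^*$ (that is the content of Corollary \ref{cor:63}, not of this theorem). The $(k-j)$-dimensional exposed faces of $S_t$ itself are obtained directly: when $t\in(1-\frac jk,1-\frac{j-1}k)$ and $j$ coordinates are tied at the top, the surviving atom of $\mu_x^{\boxplus 1/t}$ forces $\normt{x}$ to equal that common coordinate, so the norm is constant on the affine piece where the remaining $k-j$ coordinates vary (within the admissible range), and this flat piece is the face. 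Your count of ``at least $j$ coincident coordinates'' from \eqref{atoms} is correct; the expected difficulty you flag about counting directions of non-differentiability belongs to the dual-ball corollary rather than to this theorem.
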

The above theorem tells us also that whenever $t<\frac1k$, the norm $(t)$ is ``one segment away'' from being strictly convex.

\begin{proof}
With the notation $t=1/s$, let us start by describing the set
$$
\{b\in\mathbb R^k\colon\max{\mathrm{supp}(\mu_b^{\boxplus 1/t})}\leq1\}=\{b\in\mathbb R^k_+\colon\max{\mathrm{supp}(\mu_b^{\boxplus s})}\leq1\}.
$$

To start with, we shall argue that
$
\{b\in\Rnc^k \colon\max\textrm{supp}(\mu_b^{\boxplus s})=1\}$ is 
an analytic set whenever $t<\frac1k$ or, equivalently, $s>k$. (We 
understand this to mean that this set is part of a larger complex 
analytic set in the sense of \cite{chirka}.) Observe that we can view 
$F_{\mu_b}(z)$ as a function of $k+1$ complex variables:
$$
F(b_1,\dots,b_k,z)=F_{\mu_b}(z)=k
\left[\frac{1}{z-b_1}+\frac{1}{z-b_2}+\cdots+\frac{1}{z-b_k}
\right]^{-1},
$$
for all $z\neq b_j$ so that 
$\frac{1}{z-b_1}+\frac{1}{z-b_2}+\cdots+\frac{1}{z-b_k}
\neq0$.
We record for future reference:
\begin{equation}\label{F'}
\partial_zF_{\mu_b}(z)
=\frac1kF_{\mu_b}(z)^2
\left[\frac{1}{(z-b_1)^2}+\cdots+\frac{1}{(z-b_k)^2}
\right],\quad
\partial_{b_j}F_{\mu_b}(z)
=-\frac1kF_{\mu_b}(z)^2
\frac{1}{(z-b_j)^2}.
\end{equation}
In particular,
\begin{equation}\label{nabla}
\partial_zF_{\mu_b}(z)=-\sum_{j=1}^k\partial_{b_j}F_{\mu_b}(z).
\end{equation}
Equation \eqref{atoms} guarantees that under our hypothesis $(\mu_b)^{\boxplus1/t}$ has no atoms,
so by Proposition \ref{53}, the supremum of the support of $(\mu_b)^{\boxplus1/t}$
is given by the largest real solution $w$ to the equation 
$(\partial_zF_{\mu_b})(w)=\frac{s}{s-1}$ via the formula 
$w+(\frac1s-1)F_{\mu_b}(w)$. We denote first by $w=f(b_1,\dots,b_k;s)$ 
the solution of $\partial_zF_{\mu_b}(w)=\frac{s}{s-1}$.
Our first claim is that the correspondence $(b_1,\dots,b_k;s)\mapsto
f(b_1,\dots,b_n;s)$ is analytic in a neighborhood of $(\mathbb R^k
\setminus\{(b,\dots,b)|b\in\mathbb
R\})\times(k,+\infty)$ {\em in}
$(\mathbb C^k\setminus\{(b,\dots,b)|b\in\mathbb C\})\times\mathbb C$. 
This follows directly from the implicit function theorem; to prove this, we 
shall rather write the partial derivatives of $f$ (for future reference) instead of just verifying the required 
conditions for $F$. So
\begin{eqnarray}
\partial_{b_j}f(b_1,\dots,b_k;s) & = & -\frac{(\partial_{b_j}\partial_zF)(b_1,\dots,b_k,f(b_1,\dots,b_k;s))}{
(\partial_z^2F)(b_1,\dots,b_k;f(b_1,\dots,b_k;s))};\\
\partial_{s}f(b_1,\dots,b_k;s) & = & -\frac{1}{
(\partial_z^2F)(b_1,\dots,b_k,f(b_1,\dots,b_k;s))(s-1)^2}.
\end{eqnarray}
We have seen from 
Proposition \ref{53} that, as the function $w\mapsto F_{\mu_b}(w)$ is strictly concave on the (unique) unbounded interval $J$ of analyticity containing arbitrarily large positive numbers, for any solution $f(b_1,\dots,b_k;s)\in J$ in vectors
$(b_1,\dots,b_k;s)\neq(b,\dots,b;s)$ (meaning away from the diagonal of $\mathbb R^k$), the function
$(\partial_z^2F)(b_1,\dots,b_k;f(b_1,\dots,b_k;s))\neq0$, so we easily conclude from the analyticity
of $\partial_zF$ that $f$ is complex analytic around these points viewed as points in $(\mathbb C^k\setminus\{(b,\dots,b)|b\in\mathbb C\})\times\mathbb C$. The easily observed fact that $F(b,\dots,b,z)=z-b
$ implies immediately that $f$ is not analytic in the variable $s$ in points $(b,\dots,b;s).$
In addition, the above together with Proposition \ref{53} implies that $f$ is not aanalytic in any of the other variables either in the points $(b,\dots,b)$.

The above equalities together with equation \eqref{nabla} yield
\begin{equation}\label{const}
\sum_{j=1}^k\partial_{b_j}f(b_1,\dots,b_k;s)=1.
\end{equation}
The expression for $\|b\|_{(1/s)}$ (or, more precise, for $t\max\textrm{supp}(\mu_b^{\boxplus s})$) is now written as
$$
f(b_1,\dots,b_k;s)+\left(\frac1s-1\right)F(b_1,\dots,b_k,f(b_1,\dots,b_k;s)).
$$
Differentiating this function in each coordinate $b_j$ gives
\begin{eqnarray*}
t\partial_{b_j}
\max\textrm{supp}(\mu_b^{\boxplus s}) & = & \partial_{b_j}f(b;s)+\left(\frac1s-1\right)
\left[(\partial_{b_j}F)(b,f(b;s))+(\partial_{z}F)(b,f(b;s))
\partial_{b_j}f(b;s)\right]\\
& = & \left(\frac1s-1\right)
(\partial_{b_j}F)(b,f(b;s)).
\end{eqnarray*}
(We have used here that $(\partial_{z}F)(b,f(b;s))=\frac{s}{s-1}$.)
This guarantees analyticity of the complex correspondence 
$b\mapsto\|b\|_{(1/s)}$ on a complex neighbourhood of the whole set $b\in\mathbb R^k$ on which the norm $\normt{\cdot}$ is achieved on the
upper bound of the support of $\mu_b^{\boxplus s}$, for $s>k$ fixed.
It is also remarkable that 
\begin{equation}\label{const-nabla}
\|\nabla\|b\|_{1/s}\|_1=\left(\frac1s-1\right)\sum_{j=1}^k(\partial_{b_j}F)(b,f(b;s))=-\left(\frac1s-1\right)
(\partial_zF)(b,f(b;s))=1,
\end{equation}
as $(\partial_{b_j}F)(b,f(b;s))$ is easily seen to be negative from \eqref{F'}.

We have proved now that the set $ \{b\in\Rnc^k\colon\max\textrm{supp}(\mu_b^{\boxplus s})
=1\}$ is the real part of an analytic set of complex dimension $k-1$ in $\mathbb C^k$.
We claim that this set cannot contain a line that does not contain $1^k$. Indeed, assume towards contradiction that there
exist $b,c\in\Rnc^k$ with 
$\max\textrm{supp}(\mu_b^{\boxplus s})=\max\textrm{supp}(\mu_c^{\boxplus s})=1$ so that 
$\max\textrm{supp}(\mu_{ub+(1-u)c}^{\boxplus s})\subset\{b\in\Rnc^k\colon\max\textrm{supp}(\mu_b^{\boxplus s})
=1\}$
for all $u\in[0,1]$.
Then, of course, $\max\textrm{supp}(\mu_{ub+(1-u)c}^{\boxplus s})\subset\{b\in\mathbb C^k\colon\max\textrm{supp}(\mu_b^{\boxplus s})
=1\}$
for all $u\in\mathbb R$ for which $\max\textrm{supp}(\mu_{ub+(1-u)c}^{\boxplus s})$ is well defined, i.e. for 
all $u\in\mathbb R$. 
However, the set $\{ub+(1-u)c\colon u\in\mathbb R\}$
must remain included in $\mathbb R^k$. This tells us that the upper bound of the support of 
$\mu_{ub+(1-u)c}^{\boxplus s}$ must remain equal to one for all $u\in\mathbb R$.
This is not possible: since $b\neq c$ (and, moreover, the two do not differ by a multiple of $1^k$) as $u$ tends to $\pm\infty$ clearly the diameter of the support of $\mu_{ub+(1-u)c}$ will tend to infinity. 
If the expectation of $\mu_{ub+(1-u)c}$ is nonconstant (as a function of $u$), then letting $u$ tend to infinity in the appropriate direction, we may make this expectation tend to plus infinity. Clearly,
as the expectation of $\mu_{ub+(1-u)c}^{\boxplus s}$ is simply $s$ 
times the expectation of $\mu_{ub+(1-u)c}$, we obtain a contradiction with the upper boundedness of the support of $\mu_{ub+(1-u)c}^{\boxplus
s}$. If the expectation of $\mu_{ub+(1-u)c}$ is a constant function of 
$u$, then $\sum b_j=\sum c_j$. Since $b\neq c$, there must be at least two distinct coordinates with differences of opposite signs, so when 
$|u|\to\infty$, both ends of the support of $\mu_{ub+(1-u)c}$ must
tend to infinity. Thus, the variance of $\mu_{ub+(1-u)c}$ will necessarily tend to infinity. Since the variance depends linearly of $
s$, it follows that the variance of $\mu_{ub+(1-u)c}^{\boxplus s}$
also tends to infinity. But this is impossible if the upper bound of
its support is constantly equal to one and at the same time its 
first moment stays constant.

This provided us the proof of the more difficult part of our theorem.
We note next that at times $t=j/k$, $j\in\{1,2,\dots,k\}$, we witness
certain ``phase transitions.'' Indeed, whenever $t\in(1-j/k,1-(j-1)/k)$
for some positive integer $j\le k$, Proposition \ref{53} part (2) and 
equation \eqref{atoms} guarantee that points of the form 
$(b_1,\dots,b_{k-j},w,\dots,w)$ with $-w<b_1,\dots,b_{k-j}\le w$ will 
have norm $(t)$ constantly equal to $1$. However, smaller atoms will 
disappear, i.e. if more than $k-j$ elements are of absolute value 
strictly less than $w$, the norm of this vector will be strictly 
smaller than $1$. Thus, these points will generate a set (in fact an 
exposed face) of dimension at most $k-j$ in the boundary of the unit
ball of radius one in $\normt{\cdot}$. This, in particular, guarantees
that for $t\ge\frac{k-1}{k}$, $\|\cdot\|_{(t)}=\|\cdot\|_\infty$.

Finally, the geometry of this ball as the intersection of two cones
is an immediate consequence of Proposition \ref{prop:t-norm-additive-conv}.
\end{proof}

The above theorem will allow us to draw some conclusions about the 
shape of the dual unit ball. We shall denote by $C^+$ and 
$C^-$ the two closed cones with vertex at $1^k$ and $(-1)^k$ 
respectively, so that $S_t = \{x\in\mathbb R^k\colon \normt{x}=1\}=C^+\cup 
C^-$. Note that for $t<\frac1k$ the analytic set $C^+\cap C^-$ of real 
dimension $k-2$ has no singularities. This follows from the fact that 
$C^+$ and $C^-$ are parts of analytic sets which are smooth everywhere
except for $1^k$ and $(-1)^k$. Let us emphasize at this point that the intersection of the two cones $C^\pm$ does not need to be contained in a hyperplane, as it can be seen by looking at the large $t$ case, when $S_t$ is the $\ell^\iy$ ball. 

Let us make a list of the smoothness at the possible faces of $S_t$:
\begin{enumerate}
\item When $t\ge\frac{k-1}{k}$, the set $\{x\in\mathbb R^k\colon \normt{x}=1\}$ is simply the $\ell^\infty$ unit ball;
\item When $t\in(1-j/k,1-(j-1)/k)$, 
a point belonging to the relative interior of an exposed face of dimension 
$k-l$ has $k-l$ directions of smoothness for each $k-1\ge l\ge j$. 
There are zero dimensional exposed faces with no direction
of smoothness along $S_t$.
\item When $t<\frac1k$, there are only exposed faces of dimension 
$0$ and $1$. Two of the faces of dimension zero have exactly $k-1$
violations of smoothness, and infinitely many ones (situated on $C^+
\cap C^-)$ have exactly one. The points in the relative interior of
the one-dimensional faces are smooth.
\end{enumerate}
We would like to emphasize that only exposed faces of dimension 1 and
$k-1$ contain points in which $S_t$ is smooth. In addition, in terms of
probability measures $\mu_x$, we note that all points of non-smoothness
on $S_t\setminus(C^+\cap C^-)$ come from surviving atoms of
$\mu_x^{\boxplus 1/t}$. In particular, if $t<1-\frac1k$ and $x_1<x_2<
\cdots<x_k$, then $S_t$ must be smooth at $x$. Recall that $A=\{x\in
\mathbb R^k\colon\normt{x}\leq1\}$, $A^*$ denotes its polar dual, and $K_{k,t} = A^* \cap \Delta_k$.

\begin{corollary}\label{cor:63}
The faces  of the set $A^*$ are as follows:
\begin{enumerate} 
\item
For any $t\in(0,1],k\in\mathbb N$, the set $A^*$ contains in its boundary two exposed faces of dimension $k-1$, namely $\varphi(\{1^k
\})$ and $\varphi(\{(-1)^k\})$. 
\item
When $t\in(1-j/k,1-(j-1)/k)$, the set $A^*$ has in addition
exposed faces of dimensions $l-1$ for any $l\in\{j,\dots,k-1\}.$
\item
In 
particular, when $t\ge\frac{k-1}{k}$, $A^*$ coincides with the
unit ball in the norm one.
\item When $t<\frac1k$, exposed faces of 
$A^*$ are {\rm (I)} $\varphi(\{1^k\})$ and $\varphi(\{(-1)^k\})$ which are two hyperplanes, {\rm (II)} $\varphi(\mathfrak s)$, where 
$\mathfrak s$ is a segment uniting a vertex with a point from $C^+\cap 
C^-$; each $\varphi(\mathfrak s)$ is a point, so their union is 
$k-2$-dimensional and smooth in those $k-2$ directions, and 
{\rm (III)} $\varphi(\{c\})$, for all
$c\in C^+\cap C^-$; since in points of $C^+\cap C^-$ the 
$\normt{\cdot}$-unit ball is smooth in all but one direction, 
each $\varphi(\{c\})$ is a segment, and their union is a smooth $k-1
$-dimensional manifold. Moreover, for any $t<\frac{k-1}{k}$,
the set $A^*$ has infinitely many exposed faces of dimension
zero (i.e. points).
\end{enumerate}
\end{corollary}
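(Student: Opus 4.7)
The plan is to combine the polar face correspondence $\varphi$ with the dimension formula of Remark \ref{smoothness}, feeding in the geometric description of the unit sphere $S_t$ given by Theorem \ref{convex-norm}. The governing principle is that for $x$ in the relative interior of an exposed face $B$ of the unit ball $A$, the dimension of $\varphi(B)$ equals the number of independent directions in $\{x\}^\perp$ along which $\normt{\cdot}$ fails to be differentiable.

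Parts (1) and (3) I would handle immediately. The vertices $\pm 1^k$ are apexes of the two cones $C^\pm$ making up $S_t$, so the norm admits no smooth direction in $\{1^k\}^\perp$; Remark \ref{smoothness}(2) yields that $\varphi(\{\pm 1^k\})$ are $(k-1)$-dimensional hyperplanes. Part (3) is the observation from Theorem \ref{convex-norm} that $\normt{\cdot}=\|\cdot\|_\infty$ for $t\geq(k-1)/k$, whose polar dual is the $\ell^1$ unit ball. For part (2), when $t\in(1-j/k,1-(j-1)/k)$, the proof of Theorem \ref{convex-norm} exhibits flat exposed faces of $S_t$ of every dimension $k-l$ with $l\in\{j,\ldots,k\}$, obtained from vectors having $l$ equal extremal coordinates. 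At a relative-interior point $x$ of such a face, $\normt{\cdot}$ is constant along the $k-l$ face-tangential directions and homogeneous in the radial direction, yielding $k-l+1$ smooth directions. The remaining $l-1$ directions in $\{x\}^\perp$ correspond to breaking the equality between the $l$ extremal coordinates, and equation \eqref{atoms} guarantees the resulting atom of $\mu_x^{\boxplus 1/t}$ is isolated and produces genuine non-differentiability. Remark \ref{smoothness}(2) then gives $\dim\varphi(B)=l-1$ for every $l\in\{j,\ldots,k-1\}$.

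For part (4) (case $t<1/k$), I would separately consider the three types of points on $S_t$ identified in Theorem \ref{convex-norm}. Interior points of a generator segment $\mathfrak s$ joining a vertex to a point of $C^+\cap C^-$ are smoothness points of $\normt{\cdot}$ in every direction, since for $t<1/k$ the measure $\mu_x^{\boxplus 1/t}$ is atom-free; Remark \ref{smoothness}(1) then yields that $\varphi(\mathfrak s)$ is a single point, and varying $\mathfrak s$ over the $(k-2)$-dimensional parameter space $C^+\cap C^-$ produces the family (II). At a point $c\in C^+\cap C^-$, Theorem \ref{convex-norm} shows exactly one non-smooth direction survives, so $\varphi(\{c\})$ is a segment; varying $c$ sweeps out a smooth $(k-1)$-dimensional manifold of $A^*$, giving (III), and combining with (I) from part (1) completes the description. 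For the final assertion, when $t<(k-1)/k$ the sphere $S_t$ is strictly larger than the $\ell^\infty$ ball and still contains an infinite family of cone-generator segments on $C^\pm$ whose interior points are smoothness points of $\normt{\cdot}$; each such segment is an exposed $1$-dimensional face and contributes a distinct zero-dimensional exposed face of $A^*$ via $\varphi$.

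The main obstacle is the dimension count in part (2): one must verify that at a relative-interior point of a flat face of dimension $k-l$ the sphere $S_t$ is smooth in precisely $k-l+1$ directions and non-differentiable in exactly the remaining $l-1$. This requires simultaneous control of the flat-face structure and the surviving atoms described by equation \eqref{atoms}, together with care to exclude accidental smoothness in transversal directions.
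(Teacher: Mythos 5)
Your proposal follows exactly the route the paper intends: the paper states that the corollary ``is a straightforward consequence of the above theorem and the remarks preceding it,'' and your argument is precisely that synthesis, feeding the list of smoothness/non-smoothness directions on $S_t$ (from Theorem \ref{convex-norm} and the paragraph of remarks following it) into the dimension count of the polar face map from Remark \ref{smoothness}. The one obstacle you flag in part (2)---verifying that the directions transversal to a flat face genuinely destroy the atom and hence differentiability---is also left implicit in the paper, so your reconstruction matches both in method and in level of detail.
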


Clearly, the second part of the above corollary is not expressed in its 
full strength. However, the number of particular cases that would
need to be treated make a more detailed discussion too involved
to be worth pursuing here. Its proof is a straightforward consequence of the above theorem
and the remarks preceding it.

Finally, it is worth noting that $\varphi(\{1^k\})=\{x\in\mathbb R\colon
\sum x_j=1,\langle x,a\rangle\le1\textrm{ for all }a\in A\}$,
so that $K_{k,t} = \Delta_k\cap A^*\subset\varphi(\{1^k\})$. A point in 
$\Delta_k$ with strictly decreasing coordinates which is on the 
boundary of $A^*$ relative to $\Delta_k$ 
will then be a smooth point for this boundary.
Indeed, assume $x$ is such a point. We know from the previous
theorem and corollary that $x$ cannot be a smooth point of $\partial 
A^*$. Since it must belong to the relative interior of an exposed
face and it does belong to the relative {\em boundary} of 
$\varphi(\{1^k\}),$ it is clear that there is at least one other face 
of $A^*$ to which $x$ belongs, so that there is at least one more 
point $\alpha\in A\setminus\{1^k\}$ (more precise $\alpha\in
\varphi^{-1}(B)$ for some face $B\neq\varphi(\{1^k\})$)
so that $\sum x_j\alpha_j=1$ and 
$\sum x_ja_j\leq 1$ for all other $a\in A$. We claim that this point
$\alpha$ must (a) be unique up to convex combinations with $1^k$, and 
(b) have decreasing coordinates. Indeed, assuming we have an
$\alpha$ satisfying these conditions which does not have 
decreasing coordinates, then we can re-arange it so that its
coordinates do decrease. Its $(t)$ norm will not change, but
its scalar product with $x$ will strictly increase from 1, 
contradicting the definition of $A$ and $A^*$. Also,
$\sum x_j(s\alpha_j+1-s)\equiv 1$ for all $s\in[0,1]$, so the 
lack of uniqueness is proved. Now, finally, we need to argue that
this is the only possible lack of uniqueness. In order to
show that, it is enough to argue that $S_t$ is smooth around $\alpha$,
or, equivalently, that $\normt{\alpha}$ is not reached at an atom.
If this were to happen, then we would have $1=\alpha_1=\cdots=\alpha_j>
\alpha_{j+1}\ge\cdots\ge\alpha_k$ (we know that at least one of the
inequalities is strict because $\alpha\neq1^k$.) Then
$1=\langle\alpha,x\rangle=x_1+\cdots+x_j+\alpha_{j+1}x_{j+1}+\cdots+
\alpha_kx_k<\sum x_j=1,$ an obvious contradiction. Thus, by the
Theorem \ref{convex-norm}, $S_t$ is smooth at $\alpha$, so $\varphi
([\alpha,1^k])=\{x\}$ is an exposed face. 

The above discussion has as an immediate consequence
the following remark:

\begin{remark}\label{lem:exposed-Kkt}
Let $a \in C$ be a non-degenerate direction of the canonical Weyl chamber $\Delta_k^\downarrow$ and $t<1-\frac1k$. 
Then the set $H(a, t) \cap K_{k,t}$ is a singleton.
\end{remark}
We note that this result cannot be improved, as $K_{k,t}
= \Delta_k$
when $t>\frac{k-1}{k}$.

\section*{Acknowledgments}

This project was initiated at the Fields Thematic Program on Mathematics in Quantum Information in 2009. The three authors would like to thank the organizers of the program for providing them an inspiring working environment. This project also benefited from a visit at the Perimeter Institute during the conference ``Random Matrix Techniques in Quantum Information Theory'' in July 2010, and also from visits to their respective authors' universities. 

S. B.'s research was supported by a Discovery grant from the Natural Science and Engineering Research Council of Canada and a University of Saskatchewan start-up grant. B.C. was partly funded by ANR GranMa and ANR Galoisint. The research of I.N. was supported by a PEPS grant from the Institute of Physics of the CNRS and the ANR project ANR 2011 BS01 008 01. The research of B.C. and I.N. was supported in part by NSERC discovery grants, the University of Ottawa and an ERA. Both B.C. and I.N. acknowledge the hospitality of the Mittag-Leffler Institute, where some of this work was done during the ``Quantum Information Theory'' program.


\begin{thebibliography}{99}

\bibitem{akhieser} {N. I. Akhieser}. {\em  The classical moment
problem and some related questions in analysis},
{Hafner Publishing Co.}, {New York}, {(1965)}.

\bibitem{agz} {Anderson, G.,  Guionnet, A. and Zeitouni, O.}.
{\em An Introduction to Random Matrices} ,
{Cambridge University press} {(2010)}

\bibitem{aihp} Belinschi, S. T. {\em A note on regularity for free 
convolutions}. Ann. Inst. H. Poincar\'e Probab. Stat.
{\bf 42}(3) 635--648. 

\bibitem{ptrf} Belinschi, S. T. {\em The Lebesgue decomposition
of the free additive convolution of two probability distributions.}
Probab. Theory Relat. Fields (2008) 142: 125--150.

\bibitem{BB-mathz}
Belinschi, S. T. and Bercovici, H. {\em Atoms and regularity for measures in a partially
defined free convolution semigroup}. Math. Z. 248 (2004), 665--674.

\bibitem{BB-imrn}
Belinschi, S. T. and Bercovici, H. {\em Partially defined semigroups
relative to multiplicative free convolution.}
Int. Math. Res. Not. (2): 65--101, 2005

\bibitem{BB}
Belinschi, S. T. and Bercovici, H. {\em A new approach to 
subordination results in free probability.} Journal d'Analyse 
Math\'ematique, Vol 101 (2007), 357--365.

\bibitem{benaych-rao}
Benaych-Georges, F. and Rao, R.
{\it The eigenvalues and eigenvectors of finite, low rank perturbations of large random matrices.}
arXiv:0910.2120v2, to appear in Adv. in Math.

\bibitem{benaych}
Benaych-Georges, F.
{\it Eigenvectors of Wigner matrices: universality of global fluctuations.}
arXiv:1104.1219.


\bibitem{BVIUMJ} H. Bercovici and D. Voiculescu. {\em Free convolution
of measures with unbounded support}. Indiana Univ. Math. J.
{\bf 42} (3) 733--773 (1993).


\bibitem{BV}
Bercovici, H. and Voiculescu, D. {\em Superconvergence to the central limit and failure of Cramer's Theorem for free random variables.}
Probab. Theory Related Fields {\bf 103} (1995) 215--222.

\bibitem{bhatia}
Bhatia, R. {\it Matrix Analysis}. Graduate Texts in Mathematics, 169. Springer-Verlag, New York, 1997.

\bibitem{Biane} 
Biane, Philippe. {\em Processes with free increments.
} Math. Z. {\bf 227}(1), 143--174 (1998).

\bibitem{chirka} Chirka, E. M. {\em Complex Analytic Sets.} Kluwer
Academic Publishers, 1989.

\bibitem{CollingwoodL} Collingwood, E. F., Lohwater, A. J.: {
The theory of cluster sets}. {Cambridge Tracts in Mathematics and Mathematical 
Physics}, No. 56 Cambridge University Press, Cambridge {(1966)} 

\bibitem{collins-imrn}
Collins, B.
{\it Moments and Cumulants of Polynomial random variables on unitary groups, 
the Itzykson-Zuber integral and free probability }
Int. Math. Res. Not., (17):953-982, 2003. 

\bibitem{collins-male}
Collins, B. and Male, C.
{\it The strong asymptotic freeness of Haar and deterministic matrices.} 
arXiv:1105.4345

\bibitem{collins-nechita-1}
Collins, B. and Nechita, I. 
{\it Random quantum channels I: Graphical calculus and the Bell state phenomenon.} 
Comm. Math. Phys. 297 (2010), no. 2, 345--370.

\bibitem{collins-nechita-2}
Collins, B. and Nechita, I. 
{\it Random quantum channels II: Entanglement of random subspaces, Renyi entropy estimates and additivity problems.} 
Advances in Mathematics 226 (2011), 1181-1201.

\bibitem{collins-sniady}
Collins, B. and \'Sniady, P. {\it Integration with respect to the Haar measure on unitary, orthogonal and symplectic group.} Comm. Math. Phys. 264 (2006), no. 3, 773--795. 

\bibitem{defosseux}
Defosseux, M.
{\it Orbit measures, random matrix theory and interlaced determinantal processes. }
Ann. Inst. Henri Poincar\'e Probab. Stat. 46 (2010), no. 1, 209--249.

\bibitem{Garnett} J. B. Garnett, {\em Bounded analytic functions}, Academic Press, New
York, 1981.

\bibitem{HT}
Haagerup, U. and Thorbj\o{}rnsen, S. 
{\it A new application of random matrices: ${\rm Ext}(C\sp *\sb {\rm red}(F\sb 2))$ is not a group.}  
Ann. of Math. (2)  162  (2005),  no. 2, 711--775.

\bibitem{hayden-winter}
Hayden, P. and Winter, A. 
{\it Counterexamples to the maximal p-norm multiplicativity conjecture for all $p>1$}. 
Comm. Math. Phys. 284 (2008), no. 1, 263--280.

\bibitem{male}
Male, C. 
{\it Norm of polynomials in large random and deterministic matrices.} 
arXiv:1004.4155v2.

\bibitem{metcalfe}
Metcalfe, T.
{\it Ph.D. dissertation}

\bibitem{nader}
Nadler, B.
{\it Finite sample approximation results for principal component analysis: a matrix perturbation approach.}
 Ann. Statist. 36 (2008), no. 6, 2791--2817.

\bibitem{NS-mult} {Nica, A. and Speicher, R.}, {\em On the multiplication of free
$n$-tuples of noncommutative random variables}, {Amer. J. Math.} 
{\bf 118} {(1996)}, {799--837}.

\bibitem{NS} 
Nica, A. and Speicher, R. {\em Lectures on the combinatorics of free probability}, Cambridge Univ. Press (2006).

\bibitem{rockafellar}
Rockafellar, R. T.
{\it Convex analysis.}
Princeton Mathematical Series, No. 28 Princeton University Press, Princeton, N.J. 1970 xviii+451 pp. 

 \bibitem{Shapiro}
 Shapiro, J. H.  {\em Composition operators and classical function
theory}, Springer, New York, 1993.

\bibitem{V-boxplus} Voiculescu, D. {\it Addition of certain 
noncommuting random variables.} J. Funct. Anal. {\bf 66}, 323--346
(1986)

\bibitem{V-boxtimes} Voiculescu, D. {\it Multiplication of certain 
noncommuting random variables.} J. Oper. Theory (1987)

\bibitem{voiculescu-dykema-nica}
Voiculescu, D.V.,  Dykema. K.J. and Nica, A.
{\it Free random variables}, AMS (1992).

\bibitem{V-fe} Voiculescu, D. {\it The analogues of entropy and of
Fisher's information measure in free probability theory. 1.}
Comm. Math. Phys. {\bf 155}, 71--92
(1993)

\bibitem{webster} Webster, Roger J. {\em Convexity}. Oxford University Press, New York, 1994.

\end{thebibliography}
\end{document}